\newtheorem{prop}{Proposition}
\newtheorem{thm}{Theorem}
\newtheorem{lem}{Lemma}
\newtheorem{cor}{Corollary}
\theoremstyle{definition}
      \def\@setcopyright{}
      \def\serieslogo@{}
\begin{document}

\title[The Twisted Second Moment of the Dedekind Zeta Function]{The Twisted Second Moment of the Dedekind Zeta Function of a Quadratic Field}
\author{Winston Heap}
\address{Department of Mathematics, University of York, York, YO10 5DD, U.K.}
\email{winstonheap@gmail.com}
\thanks{The author is supported by an Engineering and Physical Sciences Research Council grant}
\begin{abstract}We compute the second moment of the Dedekind zeta function of a quadratic field times an arbitrary Dirichlet polynomial of length $T^{1/11-\epsilon}$.
\end{abstract}
\maketitle

\section{Introduction}
Let $\mathbb{K}$ be a quadratic number field with discriminant $D$ and let $\zeta_\mathbb{K}(s)$ be the Dedekind zeta function of $\mathbb{K}$. It is well known that $\zeta_\mathbb{K}(s)=\zeta(s)L(s,\chi)$ where $\chi$ is the Kronecker symbol $(D|\,\cdot\,)$. The asymptotic behaviour of the mean square of $\zeta_\mathbb{K}(1/2+it)$ was first given by Motohashi  in 1970 \cite{mot} where he showed that 
\begin{equation}\frac{1}{T}\int_0^T\left|\zeta_\mathbb{K}\left(\frac{1}2+it\right)\right|^2dt\sim \frac{6}{\pi^2}L(1,\chi)^2\prod_{p|D}\left(1+\frac{1}{p}\right)^{-1}\log^2 T.
\end{equation}  
A subsequent improvement was given by M\"uller  in 1989 \cite{muller ded} where, by employing the methods of Heath-Brown \cite{HB}, he calculated the lower order terms. 

For the cases of higher moments or higher degree extensions little is known. This is mainly due to the same limits in technology that prevent the calculation of the sixth moment (or higher) of the Riemann zeta function. By using the methods of section 7.19 of \cite{titch} one can at least get the lower bound 
\begin{equation}\frac{1}{T}\int_0^T\left|\zeta_\mathbb{K}\left(\frac{1}2+it\right)\right|^{2k}dt\gg \log^{2k^2} T
\end{equation}
which is in fact the expected order of magnitude. 
Bruggeman and Motohashi \cite{bm}, \cite{bm2}
have recently given explicit formulae for the fourth moment of particular quadratic Dedekind zeta functions via spectral methods.    
However, this unfortunately does not immediately give an asymptotic, as is the case when similar methods are used for the fourth moment of the Riemann zeta function (\cite{mot 2}, section 5). Other results concerning the moments of zeta functions related to number fields can be found in \cite{fomenko} and \cite{sarnak 0}; the former being concerned with cubic extensions.  

The purpose of this note is to investigate the asymptotic behaviour of 
\begin{equation}\label{zeta times arb}\int_0^T\left|\zeta\left(\frac{1}2+it\right)L\left(\frac{1}2+it,\chi\right)\right|^2\left|M\left(\frac{1}2+it\right)\right|^2dt
\end{equation}
where $\chi$ is any primitive Dirichlet character mod $q$ and $M(s)$ is an arbitrary Dirichlet polynomial of length $T^\theta$ given by 
\begin{equation}M(s)=\sum_{n\leq T^\theta}\frac{a(n)}{n^s}.
\end{equation}
By expanding out the Dirichlet polynomial $M(s)$ the problem reduces to the study of 
\begin{equation}\begin{split}I(h,k)=&\int_{-\infty}^\infty\left(\frac{h}k\right)^{-it}\zeta\left(\frac{1}2+\alpha+it\right)L\left(\frac{1}2+\beta+it,\chi\right)\\&\times\zeta\left(\frac{1}2+\gamma-it\right)L\left(\frac{1}2+\delta-it,\overline{\chi}\right)w(t)dt
\end{split}\end{equation}
where $\alpha,\beta,\gamma,\delta$ are small complex numbers and $w(t)$ is some smooth function having the intention of being the characteristic function of the interval $[T/2, 4T]$. 
 We incorporate the shifts into the argument since this allows for formulas involving the derivatives of $\zeta_\mathbb{K}$. The shifts also give a stuctural viewpoint of the main terms. 

In the case $q=1$, $I(h,k)$ becomes the twisted fourth moment of the Riemann zeta function which was studied by Hughes and Young \cite{hughes and young}. They showed that 
\begin{equation}I(h,k)=\frac{1}{\sqrt{hk}}\int_{-\infty}^\infty w(t)\Big( Z_{\alpha,\beta,\gamma,\delta,h,k}(0)+\cdots\Big)dt
\end{equation}
where $Z_{\alpha,\beta,\gamma,\delta,h,k}(s)$ is a certain product of shifted zeta functions and finite Euler products and where the remaining terms are given by five more $Z$ terms with particular permuted shifts. Our result will be similar in appearence except that in the case $q>1$ two of the $Z$ terms will be of a lower order. Our argument will mirror that of \cite{hughes and young} and as such we sometimes omit details when they follow verbatim.    

\subsection{Notation}
\begin{itemize}
\item $\chi$ denotes a primitive Dirichlet character mod $q$. Its Gauss sum is denoted $G(\chi)=G(1,\chi)$ where $G(n,\chi)=\sum_{m=1}^q \chi(m) e_q(nm)=\overline{\chi}(n)G(\chi)$ and $e_d(c)=\exp(2\pi i c/d)$.
\item $\mathfrak{a}$ is defined as being either 0 or 1 depending  on whether $\chi(-1)=1$  or $\chi(-1)=-1$ respectively.
\item $p$ always denotes a prime number and we let $h_p$ denote the $p$-adic valuation of $h$ so that $h=\prod_{p|h}p^{h_p}$. 
\item We define the $q$-part of an integer $n$ as \[n(q)=\prod_{\substack{p|h\\p|q}}p^{h_p}\]
and we define its non-$q$-part by $n^*=n/n(q)$.
\item For integers $n$ and $m$ we let $n_{(m)}:=n/(n,m)$
\end{itemize}

\subsection{Statement of Results}Similarly to \cite{hughes and young} our main term will be written in terms of  products of shifted zeta and $L$-functions as well as finite products over primes dividing $h$ and $k$. We first let
\begin{equation}f_{\alpha,\beta}(n,\chi)=\sum_{n_1n_2=n}n_1^{-\alpha}n_2^{-\beta}\chi(n_2)
\end{equation}
and let 
\begin{equation}\sigma_{\alpha,\beta}(n)=\sum_{n_1n_2=n}n_1^{-\alpha}n_2^{-\beta}.
\end{equation}
Then, our main term will be given in terms of
\begin{equation}\label{Z}Z_{\alpha,\beta,\gamma,\delta,h,k}(s)=A_{\alpha,\beta,\gamma,\delta}(s)B_{\alpha,\beta,\gamma,\delta,h,k}(s)
\end{equation}
where
\begin{equation}\label{A}\begin{split}A_{\alpha,\beta,\gamma,\delta}(s)=&\zeta(1+\alpha+\gamma+s)\zeta(1+\beta+\delta+s)L(1+\beta+\gamma+s,\chi)\\&\times \frac{L(1+\alpha+\delta+s,\overline{\chi})}{\zeta(2+\alpha+\beta+\gamma+\delta+2s)}
\prod_{p|q}\left(\frac{1-p^{-1-s-\beta-\delta}}{1-p^{-2-2s-\alpha-\beta-\gamma-\delta}}\right)
\end{split}\end{equation}
and
\begin{equation}B_{\alpha,\beta,\gamma,\delta,h,k}(s)=B_{\alpha,\beta,\gamma,\delta,h}(s,\overline{\chi})B_{\gamma,\delta,\alpha,\beta,k}(s,\chi)
\end{equation}
with
\begin{equation}B_{\alpha,\beta,\gamma,\delta,h}(s,\overline{\chi})=\prod_{p|h}\frac{\sum_{j\geq 0}f_{\alpha,\beta}(p^{j},\chi)f_{\gamma,\delta}(p^{h_p+j},\overline{\chi})p^{-j(1+s)}}{\sum_{j\geq 0}f_{\alpha,\beta}(p^{j},\chi)f_{\gamma,\delta}(p^{j},\overline{\chi})p^{-j(1+s)}}.
\end{equation}
 The lower order terms alluded to earlier are given in terms of 
\begin{equation}Z^\prime_{\alpha,\beta,\gamma,\delta,h,k}(s,\chi)=\overline{G(\chi)}A^\prime_{\alpha,\beta,\gamma,\delta}(s,\chi)B^\prime_{\alpha,\beta,\gamma,\delta,h,k}(s,\chi)
\end{equation}
where
\begin{multline}\label{A prime}A_{\alpha,\beta,\gamma,\delta}^\prime (s,\chi)\\=\frac{L(1+\alpha+\gamma+s,\chi)L(1+\beta+\delta+s,\chi)L(1+\alpha+\delta+s,\chi)L(1+\beta+\gamma+s,\chi)}{L(2+\alpha+\beta+\gamma+\delta+2s,\chi^2)}
\end{multline}
and
\begin{equation}B^\prime_{\alpha,\beta,\gamma,\delta,h,k}(s,\chi)=B^\prime_{\alpha,\beta,\gamma,\delta,h}(s,\chi)B^\prime_{\gamma,\delta,\alpha,\beta,k}(s,\chi)
\end{equation}
with
\begin{equation}B^\prime_{\alpha,\beta,\gamma,\delta,h}(s,\chi)=\prod_{p|h}\frac{\sum_{j\geq 0}\chi(p^j)\sigma_{\alpha,\beta}(p^{j})\sigma_{\gamma,\delta}(p^{h_p+j})p^{-j(1+s)}}{\sum_{j\geq 0}\chi(p^j)\sigma_{\alpha,\beta}(p^{j})\sigma_{\gamma,\delta}(p^{j})p^{-j(1+s)}}.
\end{equation}

\begin{thm}\label{main thm}
Let
\begin{equation}\begin{split}I(h,k)=&\int_{-\infty}^\infty\left(\frac{h}k\right)^{-it}\zeta\left(\frac{1}2+\alpha+it\right)L\left(\frac{1}2+\beta+it,\chi\right)\\&\times\zeta\left(\frac{1}2+\gamma-it\right)L\left(\frac{1}2+\delta-it,\overline{\chi}\right)w(t)dt
\end{split}\end{equation}
where $w(t)$ is a smooth, nonnegative function with support contained in $[T/2,4T]$, satisfying $w^{(j)}(t)\ll_j T_0^{-j}$ for all $j=0,1,2,\ldots,$ where $T^{1/2+\epsilon}\ll T_0 \ll T$. Suppose $(h,k)=1$, $hk\leq T^{\frac{2}{11}-\epsilon}$, and that $\alpha,\beta,\gamma,\delta$ are complex numbers $\ll (\log T)^{-1}$. Then
\begin{equation}\label{main thm form}\begin{split}
I(h,k)=&\frac{1}{\sqrt{hk}}\int_{-\infty}^\infty w(t)\bigg(Z_{\alpha,\beta,\gamma,\delta,h,k}(0)+\frac{1}{q^{\beta+\delta}} Z_{-\gamma,-\delta,-\alpha,-\beta,h,k}(0)\left(\frac{t}{2\pi}\right)^{-\alpha-\beta-\gamma-\delta}
\\&+Z_{-\gamma,\beta,-\alpha,\delta,h,k}(0)\left(\frac{t}{2\pi}\right)^{-\alpha-\gamma}+\frac{1}{q^{\beta+\delta}}Z_{\alpha,-\delta,\gamma,-\beta,h,k}(0)\left(\frac{t}{2\pi}\right)^{-\beta-\delta}
\\&+{\bf 1}_{q|h}\frac{\chi(k)}{q^\delta}\left(\frac{t}{2\pi}\right)^{-\alpha-\delta} Z^\prime_{-\delta,\beta,\gamma,-\alpha,\frac{h}q,k}(0,\chi)
\\&+{\bf 1}_{q|k}\frac{\overline{\chi}(h)}{q^\beta}\left(\frac{t}{2\pi}\right)^{-\beta-\gamma} Z^\prime_{\alpha,-\gamma,-\beta,\delta,h,\frac{k}q}(0,\overline{\chi})\bigg)dt+E(T)
\end{split}\end{equation}
where
\begin{equation}E(T)\ll T^{3/4+\epsilon}(hk)^{7/8+\epsilon}\big(q^{3/2+\epsilon}|L(1,\chi)|(T/T_0)^{7/4}+q^{1+\epsilon}(T/T_0)^{9/4}\big).
\end{equation}
\end{thm}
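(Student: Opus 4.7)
The plan is to adapt the argument of Hughes and Young \cite{hughes and young} to the presence of the nontrivial primitive character $\chi$. First I apply an approximate functional equation to each of the products $\zeta(\tfrac12+\alpha+it)L(\tfrac12+\beta+it,\chi)$ and $\zeta(\tfrac12+\gamma-it)L(\tfrac12+\delta-it,\overline{\chi})$, writing each as a Dirichlet sum of length $\lesssim \sqrt{qT}$ plus its functional-equation dual (in which the relevant shifts are reflected through $0$ and a $\gamma$-factor contributes a power of $t/(2\pi)$). Multiplying the two approximate sums and integrating against $(h/k)^{-it} w(t)$ produces a fourfold sum that decomposes into a \emph{diagonal} part ($hm=kn$, where $m,n$ are the total convolution arguments on the two sides) and an \emph{off-diagonal} part. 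The four possible choices of ``approximate vs.\ dual'' on each side yield four such double sums, which will ultimately supply the four $Z$-terms and the two $Z'$-terms of (\ref{main thm form}).

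The diagonal part is the easier half. Since $(h,k)=1$, the equation $hm=kn$ forces $m=k\ell$ and $n=h\ell$, and the sum over $\ell$ becomes a Dirichlet series whose Euler product at each prime matches precisely the definition of $A_{\alpha,\beta,\gamma,\delta}(s)\,B_{\alpha,\beta,\gamma,\delta,h,k}(s)$. A Mellin contour shift past the pole at $s=0$ then extracts $Z_{\alpha,\beta,\gamma,\delta,h,k}(0)$. The other three diagonals, obtained by pairing dual pieces with approximate pieces on each side, are computed identically but with shifts suitably reflected, and supply the remaining three $Z$-terms with their oscillatory factors $(t/(2\pi))^{-\cdots}$ coming from the functional-equation $\gamma$-factors and the $q^{-(\beta+\delta)}$ prefactors from the conductor.

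The off-diagonal part is handled by detecting the condition $hm-kn\neq 0$ additively and applying Poisson summation in one of the summation variables. After the resulting character sum is evaluated, the \emph{zero-frequency} term of Poisson yields a new dual diagonal whose local Euler product at each prime matches $A'_{\alpha,\beta,\gamma,\delta}(s,\chi)\,B'_{\alpha,\beta,\gamma,\delta,h,k}(s,\chi)$; this produces the $Z'$-contributions. The explicit evaluation of $G(n,\chi)=\overline{\chi}(n)G(\chi)$, valid only when $(n,q)=1$, is exactly what forces the supports $q\mid h$ and $q\mid k$ together with the factors $\overline{\chi}(k)$ and $\overline{\chi}(h)$ and the prefactor $\overline{G(\chi)}$ (or its conjugate). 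The fact that the denominator of $A'$ is an $L$-function in $\chi^2$ rather than a $\zeta$ is a consequence of the $\chi$-twist being carried through the Mellin variable, and this is why the two $Z'$-terms are of lower order than the four $Z$-terms whenever $q>1$.

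The principal obstacle is the \emph{nonzero-frequency} contribution of the Poisson step, which must be absorbed into the claimed error $E(T)$. For this one combines a stationary-phase analysis of the resulting oscillatory integrals (producing the $(T/T_0)^j$ factors) with Weil-type estimates for twisted Kloosterman sums, tracked uniformly in $q$, $h$ and $k$. Balancing the length $\sqrt{qT}$ of the approximate functional equation against the quality of these exponential-sum bounds dictates both the exponent $(hk)^{7/8+\epsilon}$ in $E(T)$ and the admissible range $hk\leq T^{2/11-\epsilon}$; the explicit $q^{3/2+\epsilon}$ and $q^{1+\epsilon}$ dependences arise from carrying Gauss-sum sizes and conductor factors through the Poisson step.
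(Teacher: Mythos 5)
Your proposal has a structural gap in the bookkeeping of where the six main terms come from. You apply an approximate functional equation to each degree-two factor $\zeta(\tfrac12+\alpha+it)L(\tfrac12+\beta+it,\chi)$ and $\zeta(\tfrac12+\gamma-it)L(\tfrac12+\delta-it,\overline\chi)$ separately, producing four products (approx$\times$approx, approx$\times$dual, dual$\times$approx, dual$\times$dual), and you claim the four corresponding diagonals supply the four $Z$-terms. This is not correct. The approx piece of the first factor carries a Dirichlet sum in $n^{-1/2-it}$ while its dual carries $n^{-1/2+it}$; similarly the approx of the second factor carries $m^{-1/2+it}$ and its dual $m^{-1/2-it}$. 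Consequently approx$\times$approx and dual$\times$dual oscillate like $(m/n)^{\pm it}$ and do have a diagonal at $hm=kn$, but the two cross products oscillate like $(mn)^{\pm it}$: there is no $m,n$-diagonal at all, and on the support of the cutoffs with $(h,k)=1$ these cross products are annihilated by integration by parts against $w(t)$ except for an $O(T)$ boundary contribution that is lower order than the main terms. So only two $Z$-terms can come from diagonals, not four, and the terms $Z_{-\gamma,\beta,-\alpha,\delta}(0)(t/2\pi)^{-\alpha-\gamma}$ and $q^{-\beta-\delta}Z_{\alpha,-\delta,\gamma,-\beta}(0)(t/2\pi)^{-\beta-\delta}$ have to emerge from the off-diagonal analysis, not from a ``dual diagonal.''

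This is indeed how the paper proceeds, but along a different route: the approximate functional equation is applied once to the full degree-four object $\Xi_{\alpha,\beta,\gamma,\delta,t}(s,\chi)$, yielding only two pieces $I^{(1)}$ and $I^{(2)}$. Each diagonal supplies one $Z$-term (plus residue terms $J^{(i)}_{a,b}$), and the off-diagonal sums $hm-kn=r\neq 0$ are treated with the Duke--Friedlander--Iwaniec $\delta$-method together with a Voronoi summation formula for $E_{\alpha,\beta}(s,c/d,\chi)$. After the Ramanujan-sum decomposition, the resulting modulus classes $d$ with $(d,q)=1$ and $q\,h(q)k(q)\mid d$ produce the two remaining $Z$-terms, while the classes forcing $q\mid h$ or $q\mid k$ produce the two $Z'$-terms; the extra residues from contour shifts cancel exactly against the diagonal $J$-terms. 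Thus all four of the terms you assign elsewhere in fact come from a single off-diagonal computation, and the $Z$-terms and $Z'$-terms arise side by side from the same source. Finally, replacing the $\delta$-method by bare Poisson summation is not merely a cosmetic change: the $hk\leq T^{2/11-\epsilon}$ range and the specific exponent $(hk)^{7/8+\epsilon}$ in $E(T)$ are calibrated to the $\delta$-method error analysis (as in Hughes--Young), and you would need to verify that your Poisson variant recovers those bounds rather than asserting a ``balancing'' that is left vague. As written, the proposal does not produce the correct main-term structure and the error-term claim is not substantiated.
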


It is not immediatelty obvious that the main term of (\ref{main thm form}) is holomorphic in terms of the shift parameters. However, after expanding $Z_{\alpha,\beta,\gamma,\delta,h,k}(0)$ in a Laurent series we see that the symmetries of the expression imply a cancellation of the poles. This procedure also allows for asymptotic expansions. Indeed, setting $h=k=1$ and $T_0=T^{12/13+\epsilon}$ we can recover the formula given by Muller in Theorem 1 of \cite{muller ded}, albeit with an error term of  
$q^{1+\epsilon}T^{12/13+\epsilon}$. 

In the case that $\chi(n)=(D|n)$ Theroem \ref{main thm} gives a rather concise expression for the asymptotic behavoiur of (\ref{zeta times arb}). Recall that a prime $p$ is inert (resp. split) in $\mathbb{K}$ if and only if $\chi(p)$ equals $-1$ (resp. $+1$). A prime $p$ is ramified if and only if it divides $D$. Plugging this information into $B_{\alpha,\beta,\gamma,\delta,h,k}(s)$, Theorem \ref{main thm} gives 

\begin{cor}Let
\begin{equation}M(s)=\sum_{n\leq X}\frac{a(n)}{n^s}
\end{equation}
with $X=T^\theta$ and suppose $\theta\leq \frac{1}{11}-\epsilon$.Then
\begin{multline}\frac{1}{T}\int_0^T\left|\zeta_{\mathbb{K}}\left(\frac{1}2+it\right)\right|^2\left|M\left(\frac{1}2+it\right)\right|^2dt\\\sim \sum_{n=0}^2\sum_{h,k\leq X}\frac{c_n(h,k)a(h)\overline{a(k)}}{hk}(h,k)\log^n\left(\frac{T(h,k)^2}{2\pi hk}\right)
\end{multline}
where the $c_n(h,k)$ are computable. For example,
\begin{equation}c_2(h,k)= \frac{6}{\pi^2}L(1,\chi)^2\prod_{p|D}\left(1+\frac{1}{p}\right)^{-1}\delta(h_{(k)})\delta(k_{(h)})
\end{equation}
where
\begin{equation}\delta(m)=\begin{cases}\prod_{\substack{p|m\\p \,\,\,\mathrm{split}}}\left(1+m_p\frac{1-p^{-1}}{1+p^{-1}}\right)& \text{{if $m_{\mathrm{inert}}$ is square}}\\ 0&\text{otherwise}\end{cases}
\end{equation}
and $m_{\mathrm{inert}}$ is the factor of $m$ composed of inert primes. 
\end{cor}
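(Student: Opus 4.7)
The plan is to deduce the corollary from Theorem \ref{main thm} by expanding $|M(1/2+it)|^2$, reducing to the coprime case, setting all shifts to zero via a Laurent expansion, and carrying out the local computation of $B_{0,0,0,0,h_0,k_0}(0)$ for $\chi$ the Kronecker symbol. Choosing $T_0\asymp T$, the error in Theorem \ref{main thm} is $O(T^{3/4+\epsilon}(hk)^{7/8+\epsilon})$, and after summing against $a(h)\overline{a(k)}/\sqrt{hk}$ with $h,k\leq T^\theta$ this is $o(T)$ precisely when $\theta\leq 1/11-\epsilon$.

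First, writing $g=(h,k)$, $h=gh_0$, $k=gk_0$ with $(h_0,k_0)=1$, we have $(h/k)^{-it}=(h_0/k_0)^{-it}$, so each inner integral is an instance of $I(h_0,k_0)$, and absorbing the $1/\sqrt{h_0k_0}$ from Theorem \ref{main thm} yields the prefactor $(h,k)a(h)\overline{a(k)}/(hk)$ as required. The six-term main term of (\ref{main thm form}) must then be evaluated at $\alpha=\beta=\gamma=\delta=0$. Each of the first four $Z$-terms has simple poles in $X=\alpha+\gamma$ and $Y=\beta+\delta$ coming from the $\zeta(1+X)\zeta(1+Y)$ factors in $A_{\alpha,\beta,\gamma,\delta}$, but the symmetries of the sum force the total to be holomorphic and give a polynomial of degree at most two in $\log(t/2\pi)$. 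A routine computation (expand $XY\cdot Z$ analytically at the origin, expand $(t/2\pi)^{-X}$, $(t/2\pi)^{-Y}$, $(t/2\pi)^{-X-Y}$ to second order, and cancel the singular parts) shows that the coefficient of $\log^2(t/2\pi)$ equals $\lim_{\alpha,\beta,\gamma,\delta\to 0}XY\cdot Z_{\alpha,\beta,\gamma,\delta,h_0,k_0}(0)$. The two $Z^\prime$-terms are regular at the origin (since $A^\prime$ is a ratio of nonvanishing $L$-values at $s=1$ with a finite, nonzero $L(2,\chi^2)$ in the denominator) and hence contribute only to $c_0$ and $c_1$.

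Using $X\zeta(1+X)\to 1$, $L(1,\chi)L(1,\overline{\chi})=L(1,\chi)^2$ for real $\chi$, and $\zeta(2)=\pi^2/6$, one finds
\begin{equation*}
\lim_{\alpha,\beta,\gamma,\delta\to 0}XY\cdot A_{\alpha,\beta,\gamma,\delta}(0)=\frac{6}{\pi^2}L(1,\chi)^2\prod_{p|D}\left(1+\frac{1}{p}\right)^{-1}.
\end{equation*}
It remains to evaluate $B_{0,0,0,0,h_0,k_0}(0)$. Since $f_{0,0}(p^j,\chi)=\sum_{b=0}^{j}\chi(p)^b$, the local factor at a ramified prime ($\chi(p)=0$) is trivially $1$; at an inert prime ($\chi(p)=-1$), $f_{0,0}(p^j,\chi)=\mathbf{1}[2\mid j]$, so the ratio is $1$ or $0$ according as $h_p$ is even or odd; at a split prime ($\chi(p)=1$), $f_{0,0}(p^j,\chi)=j+1$, and evaluating $\sum_j(j+1)(j+1+h_p)p^{-j}$ via $\sum(j+1)^2x^j=(1+x)/(1-x)^3$ and $\sum(j+1)x^j=(1-x)^{-2}$ gives the ratio $1+h_p(1-p^{-1})/(1+p^{-1})$. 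Since $h_0=h_{(k)}$ and $k_0=k_{(h)}$, these three cases combine to give $\delta(h_{(k)})\delta(k_{(h)})$, completing the identification of $c_2(h,k)$.

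The main obstacle is the shift-dependence bookkeeping in the Laurent expansion. The coefficients $c_0$ and $c_1$ (computable but not written down in the statement) require the first and second derivatives of $B_{\alpha,\beta,\gamma,\delta,h_0,k_0}(0)$ at the origin together with the subleading terms of the $\zeta$- and $L$-factors, and it is this data that rewrites the polynomial in $\log(t/2\pi)$ as the stated polynomial in $\log(T(h,k)^2/(2\pi hk))$. Once this rewriting is done, integration against $w(t)$ followed by partial summation to replace the smooth weight by $\mathbf{1}_{[0,T]}$ is standard.
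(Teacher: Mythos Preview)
Your proposal is correct and follows exactly the route the paper indicates: deduce the corollary from Theorem~\ref{main thm} by expanding $|M|^2$, reducing to coprime $h_0,k_0$, letting the shifts tend to zero via a Laurent expansion (the paper remarks on this just after the theorem), and then evaluating $B_{0,0,0,0,h_0,k_0}(0)$ prime-by-prime using the split/inert/ramified trichotomy for the Kronecker symbol. One small imprecision: the two $Z'$ terms, being regular at zero shifts and multiplied by $(t/2\pi)^0$, contribute only to $c_0$, not to $c_1$; this does not affect your identification of $c_2$.
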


\subsection{Acknowledgements} I would like to thank Chris Hughes and Matt Young for their support and useful suggestions. I would also like to thank the University of York and EPSRC for giving me the oppurtunity to do a PhD.

\section{Setup}
\subsection{An Approximate Functional Equation}

The functional equation of the Riemann zeta function is given in its symmetric form by
\begin{equation}\label{1}\Lambda(s):=\pi^{-s/2}\Gamma\left(\frac{s}{2}\right)\zeta(s)=\Lambda(1-s).
\end{equation} 
Given a primitive Dirichlet character $\chi$ mod $q$ we have a similar expression involving the Dirichlet $L$-function $L(s,\chi)$;
\begin{equation}\label{2}\xi(s,\chi):=\left(\frac{\pi}{q}\right)^{-\frac{s+\mathfrak{a}}{2}}\Gamma\left(\frac{s+\mathfrak{a}}{2}\right)L(s,\chi)=\frac{G(\chi)}{i^\mathfrak{a}\sqrt{q}}\xi(1-s,\overline{\chi}).
\end{equation} 
If we define 
\begin{equation}\begin{split}\label{Xi}\Xi_{\alpha,\beta,\gamma,\delta,t}(s,\chi)=&\Lambda\left(\frac{1}{2}+\alpha+s+it\right)\xi\left(\frac{1}{2}+\beta+s+it,\chi\right)
\\&\times\Lambda\left(\frac{1}{2}+\gamma+s-it\right)\xi\left(\frac{1}{2}+\delta+s-it,\overline{\chi}\right)
\end{split}\end{equation} 
then by the above two functional equations and the fact that $G(\chi)G(\overline{\chi})=(-1)^\mathfrak{a}q$ we see 
\begin{equation}\label{4}\Xi_{\alpha,\beta,\gamma,\delta,t}(-s,\chi)=\Xi_{-\gamma,-\delta,-\alpha,-\beta,t}(s,\chi).
\end{equation}
After expanding equation (\ref{Xi}) we group together the zeta and $L$-functions and group together the gamma factors and write
\begin{equation}\label{Xi 2}\Xi_{\alpha,\beta,\gamma,\delta,t}(s,\chi)=\zeta_{\alpha,\beta,\gamma,\delta,t}(s,\chi)\Gamma_{\alpha,\beta,\gamma,\delta,t}(s)
\end{equation} 
where 
\begin{equation}\label{6}\begin{split}
\zeta_{\alpha,\beta,\gamma,\delta,t}(s,\chi)=&\zeta\left(\frac{1}{2}+\alpha+s+it\right)L\left(\frac{1}{2}+\beta+s+it,\chi\right)
\\&\times\zeta\left(\frac{1}{2}+\gamma+s-it\right)L\left(\frac{1}{2}+\delta+s-it,\overline{\chi}\right)
\end{split}\end{equation}
and
\begin{equation}\begin{split}\label{7}
\Gamma_{\alpha,\beta,\gamma,\delta,t}(s)=&\pi^{-1-2s-\frac{\alpha+\beta+\gamma+\delta}{2}-\mathfrak{a}}q^{\frac{1}{2}+s+\frac{\beta+\delta}{2}+\mathfrak{a}}\\&\times
\Gamma\left(\frac{\frac{1}{2}+\alpha+s+it}{2}\right)\Gamma\left(\frac{\frac{1}{2}+\beta+s+it+\mathfrak{a}}{2}\right)
\\&\times\Gamma\left(\frac{\frac{1}{2}+\gamma+s-it}{2}\right)\Gamma\left(\frac{\frac{1}{2}+\delta+s-it+\mathfrak{a}}{2}\right).
\end{split}\end{equation}

\begin{thm}Let $G(s)$ be an even, entire function of rapid decay as $|s|\to \infty$ in any fixed vertical strip $|\Re (s)|\leq C$ satisfying $G(0)=1$, and let 
 \begin{equation}\label{8}V_{\alpha,\beta,\gamma,\delta,t}(x)=\frac{1}{2\pi i}\int_{(1)}\frac{G(s)}{s}g_{\alpha,\beta,\gamma,\delta}(s,t)x^{-s}ds
\end{equation} 
where
\begin{equation}\begin{split}\label{g(s,t)}
g_{\alpha,\beta,\gamma,\delta}(s,t)&=\left(\frac{\pi^2}{q}\right)^s\frac{\Gamma_{\alpha,\beta,\gamma,\delta,t}(s)}{\Gamma_{\alpha,\beta,\gamma,\delta,t}(0)}
\\&=\frac{\Gamma\left(\frac{\frac{1}{2}+\alpha+s+it}{2}\right)}{\Gamma\left(\frac{\frac{1}{2}+\alpha+it}{2}\right)}
\frac{\Gamma\left(\frac{\frac{1}{2}+\beta+s+it+\mathfrak{a}}{2}\right)}{\Gamma\left(\frac{\frac{1}{2}+\beta+it+\mathfrak{a}}{2}\right)}
\frac{\Gamma\left(\frac{\frac{1}{2}+\gamma+s-it}{2}\right)}{\Gamma\left(\frac{\frac{1}{2}+\gamma-it}{2}\right)}
\frac{\Gamma\left(\frac{\frac{1}{2}+\delta+s-it+\mathfrak{a}}{2}\right)}{\Gamma\left(\frac{\frac{1}{2}+\delta-it+\mathfrak{a}}{2}\right)}.
\end{split}\end{equation}
Also, let $X_{\alpha,\beta,\gamma,\delta,t}$ be defined by the equation
\begin{equation}\label{10}\left(\frac{\pi^2}{q}\right)^s\frac{\Gamma_{-\gamma,-\delta,-\alpha,-\beta,t}(s)}{\Gamma_{\alpha,\beta,\gamma,\delta,t}(0)}=X_{\alpha,\beta,\gamma,\delta,t}\,\,g_{-\gamma,-\delta,-\alpha,-\beta}(s,t)\end{equation}
so that 
\begin{equation}\begin{split}\label{X_alpha}
X_{\alpha,\beta,\gamma,\delta,t}&=\frac{\Gamma_{-\gamma,-\delta,-\alpha,-\beta,t}(0)}{\Gamma_{\alpha,\beta,\gamma,\delta,t}(0)}
\\&=\pi^{\alpha+\beta+\gamma+\delta}q^{-(\beta+\delta)}\frac{\Gamma\left(\frac{\frac{1}{2}-\alpha-it}{2}\right)}{\Gamma\left(\frac{\frac{1}{2}+\alpha+it}{2}\right)}
\frac{\Gamma\left(\frac{\frac{1}{2}-\beta-it+\mathfrak{a}}{2}\right)}{\Gamma\left(\frac{\frac{1}{2}+\beta+it+\mathfrak{a}}{2}\right)}
\frac{\Gamma\left(\frac{\frac{1}{2}-\gamma+it}{2}\right)}{\Gamma\left(\frac{\frac{1}{2}+\gamma-it}{2}\right)}
\frac{\Gamma\left(\frac{\frac{1}{2}-\delta+it+\mathfrak{a}}{2}\right)}{\Gamma\left(\frac{\frac{1}{2}+\delta-it+\mathfrak{a}}{2}\right)}.\end{split}
\end{equation}
Then if all $\alpha,\beta,\gamma,\delta$ have real part less than 1/2, we have

 \begin{equation}\begin{split}\label{12}
\zeta_{\alpha,\beta,\gamma,\delta,t}(0)=&\sum_{m,n}\frac{f_{\alpha,\beta}(n,\chi)f_{\gamma,\delta}(m,\overline{\chi})}{(mn)^{1/2}}\left(\frac{m}{n}\right)^{-it}V_{\alpha,\beta,\gamma,\delta,t}\left(\frac{\pi^2mn}{q}\right)
\\&+X_{\alpha,\beta,\gamma,\delta,t}\sum_{m,n}\frac{f_{-\gamma,-\delta}(n,\chi)f_{-\alpha,-\beta}(m,\overline{\chi})}{(mn)^{1/2}}\left(\frac{m}{n}\right)^{-it}
\\&\times V_{-\gamma,-\delta,-\alpha,-\beta,t}\left(\frac{\pi^2mn}{q}\right)+O\left((1+|t|)^{-1984}\right).
\end{split}\end{equation}
\end{thm}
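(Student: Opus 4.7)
The proof follows a standard Mellin-barrier argument based on the functional equation (\ref{4}). The plan is to consider the contour integral
\begin{equation*}
J := \frac{1}{2\pi i}\int_{(1)} \Xi_{\alpha,\beta,\gamma,\delta,t}(s,\chi)\,\frac{G(s)}{s\,\Gamma_{\alpha,\beta,\gamma,\delta,t}(0)}\,ds
\end{equation*}
and to evaluate it in two ways. First, writing $\Xi = \zeta_{\alpha,\beta,\gamma,\delta,t}(s,\chi)\,\Gamma_{\alpha,\beta,\gamma,\delta,t}(s)$, I would expand the zeta--$L$ factor as the absolutely convergent double Dirichlet series on $\Re s=1$ with coefficients $f_{\alpha,\beta}(n,\chi)\,f_{\gamma,\delta}(m,\overline{\chi})$ and interchange sum and integral. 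The inner $s$-integral is then identified with $V_{\alpha,\beta,\gamma,\delta,t}(\pi^2 mn/q)$ of (\ref{8}), since the factor $(\pi^2/q)^s$ appearing inside $g_{\alpha,\beta,\gamma,\delta}(s,t)$ exactly cancels the $(\pi^2/q)^{-s}$ produced by evaluating $y^{-s}$ at $y=\pi^2 mn/q$. This yields the first sum on the right-hand side of (\ref{12}).

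Second, I would shift the contour to $\Re s=-1$. The only principal pole crossed is the simple pole of $G(s)/s$ at $s=0$, whose residue equals $G(0)\,\zeta_{\alpha,\beta,\gamma,\delta,t}(0,\chi)=\zeta_{\alpha,\beta,\gamma,\delta,t}(0)$. In the resulting integral on $\Re s=-1$ I substitute $s\mapsto -s$; the factor $1/(-s)$, combined with the parity $G(-s)=G(s)$ and the reversal of orientation, returns the integral to $\Re s=1$ in the form $\frac{1}{2\pi i}\int_{(1)}\Xi_{\alpha,\beta,\gamma,\delta,t}(-s,\chi)G(s)/(s\,\Gamma_{\alpha,\beta,\gamma,\delta,t}(0))\,ds$, with the correct sign to contribute positively. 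The functional equation (\ref{4}) rewrites $\Xi_{\alpha,\beta,\gamma,\delta,t}(-s,\chi)$ as $\zeta_{-\gamma,-\delta,-\alpha,-\beta,t}(s,\chi)\,\Gamma_{-\gamma,-\delta,-\alpha,-\beta,t}(s)$; pulling out the factor $X_{\alpha,\beta,\gamma,\delta,t}=\Gamma_{-\gamma,-\delta,-\alpha,-\beta,t}(0)/\Gamma_{\alpha,\beta,\gamma,\delta,t}(0)$ of (\ref{X_alpha}), expanding the new zeta--$L$ product as a Dirichlet series on $\Re s=1$, and identifying the resulting inner integral as $V_{-\gamma,-\delta,-\alpha,-\beta,t}(\pi^2 mn/q)$ produces the second sum in (\ref{12}).

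The main technical obstacle is controlling the residues at the two further simple poles of the integrand inside $-1<\Re s<1$: those of $\zeta(\tfrac12+\alpha+s+it)$ at $s=\tfrac12-\alpha-it$ and of $\zeta(\tfrac12+\gamma+s-it)$ at $s=\tfrac12-\gamma+it$. Each such residue carries a factor $G(\tfrac12-\alpha-it)$ or $G(\tfrac12-\gamma+it)$, so the hypothesis of rapid decay of $G$ in vertical strips supplies any negative polynomial power of $(1+|t|)$. The remaining ingredient is to verify via Stirling that the gamma ratio $\Gamma_{\alpha,\beta,\gamma,\delta,t}(\tfrac12-\alpha-it)/\Gamma_{\alpha,\beta,\gamma,\delta,t}(0)$ and the other $\zeta$ and $L$ values arising in the residue are merely polynomially bounded in $|t|$; this holds because the exponential $e^{-\pi|t|/2}$ Stirling factors in numerator and denominator cancel, leaving only polynomial growth that is easily absorbed by $G$. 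The combined contribution is then $O((1+|t|)^{-1984})$, and everything else is routine bookkeeping of the $(\pi^2/q)^s$ normalisation.
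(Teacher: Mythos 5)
Your approach is essentially identical to the paper's: take the Mellin--barrier integral of $\Xi_{\alpha,\beta,\gamma,\delta,t}(s,\chi)\,G(s)/(s\,\Gamma_{\alpha,\beta,\gamma,\delta,t}(0))$ over $\Re s=1$, expand the Dirichlet series to read off the first sum, shift to $\Re s=-1$ to pick up the residue $\zeta_{\alpha,\beta,\gamma,\delta,t}(0)$ at $s=0$, then substitute $s\mapsto -s$ and apply the functional equation (\ref{4}) to generate the second sum. One bookkeeping point deserves care: in the substitution $s\mapsto -s$, the factor $ds\mapsto -du$ and the reversal of the integration limits exactly cancel, so the only surviving sign comes from $1/(-s)$, and the integral on $\Re s=-1$ equals $-\frac{1}{2\pi i}\int_{(1)}\Xi_{\alpha,\beta,\gamma,\delta,t}(-s,\chi)\,G(s)/(s\,\Gamma_{\alpha,\beta,\gamma,\delta,t}(0))\,ds$, with a minus sign rather than the plus your enumeration of sign sources suggests; it is precisely this minus that, upon rearranging $J=\zeta_{\alpha,\beta,\gamma,\delta,t}(0)+(\text{integral at }-1)$, produces the $+\,X_{\alpha,\beta,\gamma,\delta,t}\cdot(\text{second sum})$ in (\ref{12}) and not a difference. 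The paper's own exposition elides the same sign in its chain $I_2=\int_{(-1)}\cdots=\int_{(1)}\cdots$ before asserting $I_1+I_2=\Xi_{\alpha,\beta,\gamma,\delta,t}(0)+O(\cdots)$, so you are in good company, but you should track it explicitly if you write the argument out. Your observation that the $\Lambda$-pole residues at $s=\tfrac12-\alpha-it$ and $s=\tfrac12-\gamma+it$ are annihilated by the rapid vertical decay of $G$ after the exponential Stirling factors cancel is the mechanism the paper invokes in one line, stated more carefully, and is correct.
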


\begin{proof}We start by considering 
\[I_1=\frac{1}{2\pi i}\int_{(1)}\Xi_{\alpha,\beta,\gamma,\delta,t}(s)\frac{G(s)}{s}ds.\]
Moving the line of integration to $(-1)$ we obtain a new integral
\begin{eqnarray*}I_2&=&\frac{1}{2\pi i}\int_{(-1)}\Xi_{\alpha,\beta,\gamma,\delta,t}(s,\chi)\frac{G(s)}{s}ds
\\&=&\frac{1}{2\pi i}\int_{(1)}\Xi_{-\gamma,-\delta,-\alpha,-\beta,t}(s,\chi)\frac{G(s)}{s}ds
\end{eqnarray*}
where we have made the change of variables $s\mapsto -s$ and used the functional equation (\ref{4}). Due to the rapid decay of $G(s)$ in the imaginary direction we see that the only residue of the integrand that matters is the one at $s=0$. Therefore, we may write 
\[I_1+I_2=\Xi_{\alpha,\beta,\gamma,\delta,t}(0)+O((1+|t|)^{-1984})\]
and hence
\begin{align*}\zeta_{\alpha,\beta,\gamma,\delta,t}(0,\chi)=&\frac{1}{2\pi i}\int_{(1)}\zeta_{\alpha,\beta,\gamma,\delta,t}(s,\chi)\frac{\Gamma_{\alpha,\beta,\gamma,\delta,t}(s)}{\Gamma_{\alpha,\beta,\gamma,\delta,t}(0)}\frac{G(s)}{s}ds\\
&+\frac{1}{2\pi i}\int_{(1)}\zeta_{-\gamma,-\delta,-\alpha,-\beta,t}(s,\chi)\frac{\Gamma_{-\gamma,-\delta,-\alpha,-\beta,t}(s)}{\Gamma_{\alpha,\beta,\gamma,\delta,t}(0)}\frac{G(s)}{s}ds
\\&+O((1+|t|)^{-1984})
\\=&\frac{1}{2\pi i}\int_{(1)}\zeta_{\alpha,\beta,\gamma,\delta,t}(s,\chi)g_{\alpha,\beta,\gamma,\delta}(s,t)\left(\frac{\pi^2}{q}\right)^{-s}\frac{G(s)}{s}ds\\
&+\frac{X_{\alpha,..}}{2\pi i}\int_{(1)}\zeta_{-\gamma,-\delta,-\alpha,-\beta,t}(s,\chi)g_{-\gamma,-\delta,-\alpha,-\beta,t}(s,t)\left(\frac{\pi^2}{q}\right)^{-s}\frac{G(s)}{s}ds
\\&+O((1+|t|)^{-1984}).
\end{align*}
Now, on expanding the Dirichlet series we have 
\[\zeta_{\alpha,\beta,\gamma,\delta,t}(s,\chi)=\sum_{m,n}\frac{f_{\alpha,\beta}(n,\chi)f_{\gamma,\delta}(m,\overline{\chi})}{n^{1/2+s+it}m^{1/2+s-it}}\] 
and so by reversing the order of integration and summation the result follows.
\end{proof}
We note that by Stirling's formula we have
\begin{equation}\label{X asymp}X_{\alpha,\beta,\gamma,\delta,t}=q^{-\beta-\delta}\left(\frac{t}{2\pi}\right)^{-\alpha-\beta-\gamma-\delta}\left(1+O(t^{-1})\right)
\end{equation}
and
\begin{equation}\label{g asymp}g_{\alpha,\beta,\gamma,\delta}(s,t)=\left(\frac{t}{2}\right)^{2s}\left(1+O_s(t^{-1})\right)
\end{equation}
as $t\to\infty$. The dependence on $s$ in the error term of $g_{\alpha,\beta,\gamma,\delta}(s,t)$ is of polynomial growth, at most, and hence will be negated by the rapid decay of $G(s)$ in any applications we make.  

It will frequently occur that a function $f_{\alpha,\beta,\gamma,\delta}$, say, arising from the first term of the approximate functional equation has an equivalent $f_{-\gamma,-\delta, -\alpha,-\beta}$ arising from the second. As such, we shall often abbreviate functions of the form $f_{\alpha,\beta,\gamma,\delta}$ to $f_{\boldsymbol{\alpha}}$ and $f_{-\gamma,-\delta, -\alpha,-\beta}$ to $f_{-\boldsymbol{\gamma}}$.

\subsection{A Formula for the Twisted Second Moment}
Applying the approximate functional equation to $I(h,k)$ gives
\begin{equation}\begin{split}\label{14}
&I(h,k)\\=&\sum_{m,n}\frac{f_{\alpha,\beta}(n,\chi)f_{\gamma,\delta}(m,\overline{\chi})}{(mn)^{1/2}}\int_{-\infty}^\infty\left(\frac{hm}{kn}\right)^{-it}V_{\boldsymbol{\alpha},t}\left(\frac{\pi^2mn}{q}\right)w(t)dt
\\&+\sum_{m,n}\frac{f_{-\gamma,-\delta}(n,\chi)f_{-\alpha,-\beta}(m,\overline{\chi})}{(mn)^{1/2}}\int_{-\infty}^\infty\left(\frac{hm}{kn}\right)^{-it}X_{\boldsymbol{\alpha},t}
 V_{-\boldsymbol{\gamma},t}\left(\frac{\pi^2mn}{q}\right)w(t)dt
\\=&I^{(1)}(h,k)+I^{(2)}(h,k).
\end{split}\end{equation}
By expanding the inner integral and interchanging the orders of integration we have
\begin{equation}\begin{split}\label{I^1}I^{(1)}(h,k)=&\sum_{m,n}\frac{f_{\alpha,\beta}(n,\chi)f_{\gamma,\delta}(m,\overline{\chi})}{(mn)^{1/2}}\frac{1}{2\pi i}\int_{(1)}\frac{G(s)}{s}\left(\frac{\pi^2mn}{q}\right)^{-s}
\\&\times\int_{-\infty}^\infty\left(\frac{hm}{kn}\right)^{-it}g_{\boldsymbol{\alpha}}(s,t)w(t)dtds
\end{split}\end{equation}
and similarly
\begin{equation}\begin{split}\label{I^2}I^{(2)}(h,k)=&\sum_{m,n}\frac{f_{-\gamma,-\delta}(n,\chi)f_{-\alpha,-\beta}(m,\overline{\chi})}{(mn)^{1/2}}\frac{1}{2\pi i}\int_{(1)}\frac{G(s)}{s}\left(\frac{\pi^2mn}{q}\right)^{-s}
\\&\times\int_{-\infty}^\infty\left(\frac{hm}{kn}\right)^{-it}X_{\boldsymbol{\alpha},t}
 g_{-\boldsymbol{\gamma}}(s,t)w(t)dtds.
\end{split}\end{equation}

We will now split the sum over $m$, $n$ into those parts for which $hm=kn$, the diagonal terms, and those for which $hm\neq kn$, the off-diagonals. The former can be calculated fairly easily using classical techniques whilst for the latter we apply the methods of \cite{dfi}. In what follows we only work with $I^{(1)}(h,k)$ since any result we acquire can be made to apply to $I^{(2)}(h,k)$ by performing the substitutions $\alpha\leftrightarrow -\gamma$, $\beta\leftrightarrow -\delta$ and by inserting $X_{\boldsymbol{\alpha},t}$ into the integrals over $t$. This often amounts to multiplying by $q^{-\beta-\delta}(t/2\pi)^{-\alpha-\beta-\gamma-\delta}$ after using (\ref{X asymp}) .

\section{Diagonal Terms}

Let $I_D^{(1)}(h,k)$ denote the sum of terms in $I^{(1)}(h,k)$ for which $hm=kn$.  Writing $n=hl$ and $m=kl$ with $l\geq 1$ we see
\begin{equation}\begin{split}\label{I_D}I_D^{(1)}(h,k)=&\frac{1}{{\sqrt{hk}}}\int_{-\infty}^\infty w(t)\frac{1}{2\pi i}\int_{(1)}\frac{G(s)}{s}\left(\frac{\pi^2hk}{q}\right)^{-s}
\\&\times g_{\boldsymbol{\alpha}}(s,t)\sum_{l=1}^\infty\frac{f_{\alpha,\beta}(kl,\chi)f_{\gamma,\delta}(hl,\overline{\chi})}{l^{1+2s}}dsdt.
\end{split}\end{equation}
Here we have pushed the sum through the integrals but since the shift parameters are small we have absolute convergence and hence this is legal.

\begin{prop}\label{diagonals prop}Let $Z_{\boldsymbol{\alpha},h,k}(s)$ be given by (\ref{Z}). Then
\begin{equation}\begin{split}I_D^{(1)}(h,k)=&\frac{1}{\sqrt{hk}}\int_{-\infty}^\infty Z_{\boldsymbol{\alpha},h,k}(0)w(t)dt+J^{(1)}_{\alpha,\gamma}+J^{(1)}_{\beta,\delta}+O\left(\frac{q^{\epsilon}T^{\frac{1}{2}+\epsilon}}{(qhk)^{1/4}}\right)
\end{split}\end{equation}
and
\begin{equation}\begin{split}I_D^{(2)}(h,k)=&\frac{1}{q^{\beta+\delta}}\frac{1}{\sqrt{hk}}\int_{-\infty}^\infty Z_{-\boldsymbol{\gamma},h,k}(0)\left(\frac{t}{2\pi}\right)^{-\alpha-\beta-\gamma-\delta}w(t)dt
\\&+J^{(2)}_{\alpha,\gamma}+J^{(2)}_{\beta,\delta}+O\left(\frac{q^{\epsilon}T^{\frac{1}{2}+\epsilon}}{(qhk)^{1/4}}\right)
\end{split}\end{equation}
where
\begin{equation}J^{(1)}_{a,b}=q^{-\frac{a+b}{2}}\frac{\mathrm{Res}_{2s=-a-b}( Z_{\boldsymbol{\alpha},h,k}(2s))}{(hk)^{\frac{1}{2}-\frac{a+b}{2}}}\frac{G(-(a+b)/2)}{-(a+b)/2}\int_{-\infty}^\infty\left(\frac{t}{2\pi}\right)^{-a-b}w(t)dt
\end{equation}
and
\begin{equation}\begin{split}J^{(2)}_{a,b}=&\frac{1}{q^{\beta+\delta}}q^{\frac{a+b}{2}}\frac{\mathrm{Res}_{2s=a+b}( Z_{-\boldsymbol{\gamma},h,k}(2s))}{(hk)^{\frac{1}{2}+\frac{a+b}{2}}}\frac{G((a+b)/2)}{(a+b)/2}\\&\times \int_{-\infty}^\infty\left(\frac{t}{2\pi}\right)^{-\alpha-\beta-\gamma-\delta+a+b}w(t)dt.
\end{split}\end{equation}
\end{prop}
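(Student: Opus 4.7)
The strategy is to identify the inner Dirichlet series in $l$ with $Z_{\boldsymbol{\alpha},h,k}(2s)$ and then shift the $s$-contour past three simple poles.

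\emph{Step 1: Euler product.} Since $(h,k)=1$ and the arithmetic functions involved are multiplicative, the sum $\sum_{l\geq 1}f_{\alpha,\beta}(kl,\chi)f_{\gamma,\delta}(hl,\overline{\chi})l^{-1-2s}$ factors as an Euler product. At primes $p\nmid hkq$ the local factor is the standard Rankin--Selberg convolution factor and yields the $p$-contribution to $A_{\boldsymbol{\alpha}}(2s)$. At $p\mid h$ (so $k_p=0$), the local factor is $\sum_{j\geq 0}f_{\alpha,\beta}(p^j,\chi)f_{\gamma,\delta}(p^{h_p+j},\overline{\chi})p^{-j(1+2s)}$; dividing by the ``unshifted'' local factor reproduces the $p$-factor of $B_{\boldsymbol{\alpha},h}(2s,\overline{\chi})$, and the case $p\mid k$ is symmetric. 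At $p\mid q$, the vanishing $\chi(p)=\overline{\chi}(p)=0$ collapses the local series to $(1-p^{-1-\alpha-\gamma-2s})^{-1}$, which is easily seen to agree with the $p\mid q$ correction factor built into $A_{\boldsymbol{\alpha}}(2s)$ in \eqref{A}. Assembling these ingredients, the inner $l$-sum is exactly $Z_{\boldsymbol{\alpha},h,k}(2s)$.

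\emph{Step 2: Residue extraction.} Shifting the $s$-contour from $\Re s=1$ to $\Re s=-c$ with $c=1/4-\epsilon$, the only singularities crossed are simple poles at $s=0$ (from $1/s$) and at $s=-(\alpha+\gamma)/2$, $s=-(\beta+\delta)/2$ (from the two $\zeta$-factors of $A_{\boldsymbol{\alpha}}(2s)$); the $L$-factors and the finite Euler products $B$ are holomorphic in a neighbourhood of this strip. At $s=0$ we use $G(0)=g_{\boldsymbol{\alpha}}(0,t)=1$ to obtain the main term $(hk)^{-1/2}\int Z_{\boldsymbol{\alpha},h,k}(0)w(t)\,dt$. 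At the pole $s=-(a+b)/2$ (with $(a,b)=(\alpha,\gamma)$ or $(\beta,\delta)$), the Stirling asymptotic~\eqref{g asymp} together with the elementary identity $(\pi^2/q)^{(a+b)/2}(t/2)^{-a-b}=q^{-(a+b)/2}(t/(2\pi))^{-a-b}$ produces exactly the expressions $J^{(1)}_{\alpha,\gamma}$ and $J^{(1)}_{\beta,\delta}$.

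\emph{Step 3: Error bound and $I_D^{(2)}$.} On $\Re s=-c$ the convexity bounds $\zeta(1/2+\epsilon+iu)\ll(1+|u|)^{1/4+\epsilon}$ and $L(1/2+\epsilon+iu,\chi)\ll(q(1+|u|))^{1/4+\epsilon}$ give $|Z_{\boldsymbol{\alpha},h,k}(2s)|\ll q^{1/2+\epsilon}(1+|\Im s|)^{1+\epsilon}(hk)^\epsilon$, while Stirling controls $g_{\boldsymbol{\alpha}}$ and the rapid decay of $G$ makes the $y$-integral convergent. Balanced against the prefactor $(\pi^2hk/q)^{c}$ and integrated against $w(t)$ supported on $[T/2,4T]$, this yields the claimed $O(q^\epsilon T^{1/2+\epsilon}/(qhk)^{1/4})$. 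The identity for $I_D^{(2)}(h,k)$ is then obtained by repeating the above verbatim with $(\alpha,\beta)\leftrightarrow(-\gamma,-\delta)$ and the insertion of $X_{\boldsymbol{\alpha},t}$ inside the $t$-integral; replacing $X_{\boldsymbol{\alpha},t}$ by its Stirling asymptotic $q^{-\beta-\delta}(t/(2\pi))^{-\alpha-\beta-\gamma-\delta}(1+O(t^{-1}))$ from~\eqref{X asymp} supplies the extra $t$-factor in both the main term and in the residues $J^{(2)}_{\alpha,\gamma}$, $J^{(2)}_{\beta,\delta}$. The most delicate step is the Euler-product identification at ramified primes $p\mid q$ and the simultaneous optimisation of $c$ against the convexity growth of four $\zeta/L$-factors.
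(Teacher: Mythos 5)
Your proposal follows the paper's own proof in every essential step: the same Euler-product factorization of the inner $l$-sum into $Z_{\boldsymbol{\alpha},h,k}(2s)$ (the $B$-factor coming from ratios of local factors at $p\mid h$, $p\mid k$ and the $A$-factor from the full product over all $p$, including the collapse at ramified $p\mid q$), the same contour shift to $\Re(s)=-1/4+\epsilon$ past the three simple poles at $s=0$, $s=-(\alpha+\gamma)/2$, $s=-(\beta+\delta)/2$, the same use of the Stirling asymptotic for $g_{\boldsymbol{\alpha}}(s,t)$ and $X_{\boldsymbol{\alpha},t}$, and the same substitutions $\alpha\leftrightarrow-\gamma$, $\beta\leftrightarrow-\delta$ to pass to $I_D^{(2)}$. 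One small imprecision in Step~3 is worth noting: the convexity bound you quote gives $|Z_{\boldsymbol{\alpha},h,k}(2s)|\ll q^{1/2+\epsilon}(hk)^\epsilon(1+|\Im s|)^{1+\epsilon}$ on the shifted line, and combining this with the prefactor $|(qt^2/4\pi^2hk)^s|\asymp q^{-1/4+\epsilon}(hk)^{1/4-\epsilon}t^{-1/2+2\epsilon}$ and the outer $(hk)^{-1/2}$ actually yields an error of size $q^{1/4+\epsilon}(hk)^{-1/4}T^{1/2+\epsilon}$, not the stated $q^{-1/4+\epsilon}(hk)^{-1/4}T^{1/2+\epsilon}$. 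The paper's own display (26) absorbs the $q$-growth of $Z$ on the line into a $T^\epsilon$ without comment, so this is not a defect you introduced; it is in any case harmless, since this diagonal contribution is dominated by the $q^{1+\epsilon}$ and $q^{3/2+\epsilon}|L(1,\chi)|$ factors already present in the final error term $E(T)$.
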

\begin{proof}By the theory of Euler products (see for example \cite{titch}, section 1.4) we have
\begin{equation}\begin{split}&\sum_{l=1}^\infty\frac{f_{\alpha,\beta}(kl,\chi)f_{\gamma,\delta}(hl,\overline{\chi})}{l^{1+s}}=
\prod_p\sum_{j\geq 0}\frac{f_{\alpha,\beta}(p^{k_p+j},\chi)f_{\gamma,\delta}(p^{h_p+j},\overline{\chi})}{p^{j(1+s)}}
\\=&\left(\prod_{(p,hk)=1}\sum_{j\geq 0}\frac{f_{\alpha,\beta}(p^{j},\chi)f_{\gamma,\delta}(p^{j},\overline{\chi})}{p^{j(1+s)}}\right)\left(\prod_{p|h}\sum_{j\geq 0}\frac{f_{\alpha,\beta}(p^{j},\chi)f_{\gamma,\delta}(p^{h_p+j},\overline{\chi})}{p^{j(1+s)}}\right)
\\&\,\,\,\,\,\,\,\,\,\,\,\,\,\,\,\,\,\,\,\,\,\,\,\,\,\,\,\,\,\,\,\,\,\,\times\left(\prod_{p|k}\sum_{j\geq 0}\frac{f_{\alpha,\beta}(p^{k_p+j},\chi)f_{\gamma,\delta}(p^{j},\overline{\chi})}{p^{j(1+s)}}\right)
\\=&\left(\sum_{n=1}^\infty\frac{f_{\alpha,\beta}(n,\chi)f_{\gamma,\delta}(n,\overline{\chi})}{n^{1+s}}\right)B_{\boldsymbol{\alpha},h,k}(s).
\end{split}\end{equation}
By using a method similar to that used on 1.3.3 of \cite{titch} or of that given in section 1.6 of \cite{bump} we see this last Dirichlet series has Euler product 
\begin{equation}\begin{split}&\prod_p \left(1-\frac{1}{p^{1+s+\alpha+\gamma}}\right)^{-1}\left(1-\frac{\overline{\chi(p)}}{p^{1+s+\alpha+\delta}}\right)^{-1}\left(1-\frac{\chi(p)}{p^{1+s+\beta+\gamma}}\right)^{-1}
\\&\,\,\,\,\,\,\,\,\,\,\,\,\,\,\,\,\,\,\,\,\times \left(1-\frac{|\chi(p)|^2}{p^{1+s+\beta+\delta}}\right)^{-1}\left(1-\frac{|\chi(p)|^2}{p^{2+2s+\alpha+\beta+\gamma+\delta}}\right)
\end{split}\end{equation}
which equals $A_{\boldsymbol{\alpha}}(s)$.  Hence
\begin{equation}\label{22}I_D^{(1)}(h,k)=\frac{1}{{\sqrt{hk}}}\int_{-\infty}^\infty w(t)\frac{1}{2\pi i}\int_{(\epsilon)}\frac{G(s)}{s}\left(\frac{\pi^2hk}{q}\right)^{-s}
 g_{\boldsymbol{\alpha}}(s,t)Z_{\boldsymbol{\alpha},h,k}(2s)dsdt.
\end{equation}
On applying the approximation (\ref{g asymp}) we have

\begin{equation}\label{23}I_D^{(1)}(h,k)=\frac{1}{{\sqrt{hk}}}\int_{-\infty}^\infty w(t)\frac{1}{2\pi i}\int_{(\epsilon)}\frac{G(s)}{s}\left(\frac{qt^2}{4\pi^2hk}\right)^{s}
 Z_{\boldsymbol{\alpha},h,k}(2s)dsdt
+O\left(\frac{(qT)^\epsilon}{\sqrt{hk}}\right)
\end{equation}
where we have used the estimate 
\begin{equation}\int_{-\infty}^\infty t^{-1+\epsilon}w(t)dt\ll\int_{T/2}^{4T} t^{-1+\epsilon}|w(t)|dt\ll T^\epsilon.\end{equation}

Since $G(s)$ is of rapid decay and $Z$ is only of moderate growth we may shift  the line of integration to $\Re(s)=-1/4+\epsilon$ (with the shift parameters small) and encounter poles at $s=0$, $2s=-\alpha-\gamma$ and $2s=-\beta-\delta$. Similarly to before we may estimate the integral on this new line as 
\begin{equation}\ll q^{-1/4+\epsilon}(hk)^{-1/4-\epsilon}T^\epsilon\int_{T/2}^{4T}t^{-1/2+\epsilon}|w(t)|dt\ll q^\epsilon (qhk)^{-1/4}T^{1/2+\epsilon}.\end{equation}
The pole at $s=0$ gives 
\[\frac{1}{\sqrt{hk}}\int_{-\infty}^\infty Z_{\boldsymbol{\alpha},h,k}(0)w(t)dt\]
whilst the two poles at $2s=-\alpha-\gamma$ and $2s=-\beta-\delta$ give rise to $J_{\alpha,\gamma}^{(1)}$ and $J_{\beta,\delta}^{(1)}$ respectively. A similar argument follows for $I^{(2)}_D$.
\end{proof}

We let $I_O^{(1)}(h,k)$ (resp. $I_O^{(2)}(h,k)$) denote the sum of terms in $I^{(1)}(h,k)$ (resp. $I^{(2)}(h,k)$) for which $hm\neq kn$. The goal of the remainder of this paper is to prove the following.

\begin{prop}\label{off diagonals prop}We have 
\begin{equation}\label{off diagonals}\begin{split}&I_O^{(1)}(h,k)+I_O^{(2)}(h,k)=\\&
\frac{1}{\sqrt{hk}}\int_{-\infty}^\infty w(t)\Bigg(\left(\frac{t}{2\pi}\right)^{-\alpha-\gamma} Z_{-\gamma,\beta,-\alpha,\delta,h,k}(0)
+\left(\frac{qt}{2\pi}\right)^{-\beta-\delta} Z_{\alpha,-\delta,\gamma,-\beta,h,k}(0)
\\&+{\bf 1}_{q|h}\frac{\chi(k)}{q^\delta}\left(\frac{t}{2\pi}\right)^{-\alpha-\delta} Z^\prime_{-\delta,\beta,\gamma,-\alpha,\frac{h}q,k}(0,\chi)
+{\bf 1}_{q|k}\frac{\overline{\chi}(h)}{q^\beta}\left(\frac{t}{2\pi}\right)^{-\beta-\gamma}
\\&\times Z^\prime_{\alpha,-\gamma,-\beta,\delta,h,\frac{k}q}(0,\overline{\chi})\Bigg)dt
-J^{(1)}_{\alpha,\gamma}-J^{(1)}_{\beta,\delta}-J^{(2)}_{\alpha,\gamma}-J^{(2)}_{\beta,\delta}+E(T).
\end{split}\end{equation}
where $E(T)$ is the error term of Theorem \ref{main thm}.
\end{prop}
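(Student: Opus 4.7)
The plan is to treat $I_O^{(1)}(h,k)$ in detail and then transfer the conclusion to $I_O^{(2)}(h,k)$ via the symmetry $\alpha\leftrightarrow -\gamma$, $\beta\leftrightarrow -\delta$ together with the insertion of $X_{\boldsymbol{\alpha},t}$, which by \eqref{X asymp} amounts to multiplying by $q^{-\beta-\delta}(t/2\pi)^{-\alpha-\beta-\gamma-\delta}$. First I would parametrise the off-diagonal terms by $r:=hm-kn\neq 0$. Since $w$ is supported in $[T/2,4T]$ with $w^{(j)}\ll T_0^{-j}$ and since $g_{\boldsymbol{\alpha}}(s,t)\sim(t/2)^{2s}$, repeated integration by parts in the $t$-integral of \eqref{I^1} localises $r$ to the effective range $|r|\ll (hmkn)^{1/2}(T/T_0)T^{-1+\epsilon}$ up to negligible error, and produces a smooth weight depending only on $r/\sqrt{hmkn}$.

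The next step is to detect the equation $hm-kn=r$ using the $\delta$-method of Duke--Friedlander--Iwaniec, writing the indicator as an average over moduli $c$ of additive characters $e_c(r\bar a)$ with a smooth weight in $a$ on a scale $C\asymp(hkT/T_0)^{1/2}$. Substituting this into \eqref{I^1} converts the off-diagonal into a sum over $c$ of the product of two sums in $m$ and $n$, each of the shape $\sum f_{\gamma,\delta}(m,\overline\chi)e_c(hm\bar a/c)(\cdots)$ and $\sum f_{\alpha,\beta}(n,\chi)e_c(-kn\bar a/c)(\cdots)$. I would then apply Voronoi summation to each of these sums. Since $f_{\alpha,\beta}(n,\chi)$ is the coefficient of $\zeta(s+\alpha)L(s+\beta,\chi)$, its Voronoi formula is obtained from the joint functional equation \eqref{1}--\eqref{2}; the transformation introduces a Gauss sum in the variables $(c,q)$ whose factorisation along $c=c_1 c_2$ with $(c_1,q)=1$ and $c_2\mid q^\infty$ is what ultimately controls whether the dual main term is of $Z$-type or of $Z'$-type.

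After dualising, the resulting Dirichlet series in $m$ and $n$ admit meromorphic continuation, and I would shift the $s$-contour in \eqref{I^1} past $s=0$ and past the poles at $2s=-\alpha-\gamma$, $2s=-\beta-\delta$. The residue at $s=0$ coming from the part of the $c$-sum with $(c,q)=1$ yields the two swapped-shift contributions $Z_{-\gamma,\beta,-\alpha,\delta,h,k}(0)$ and $Z_{\alpha,-\delta,\gamma,-\beta,h,k}(0)$ in the statement; the contribution from the $q\mid c$ part factors the Gauss sum $\overline{G(\chi)}$ out of $B$ and replaces the $A$ of \eqref{A} by the four-$L$-function object $A'$ of \eqref{A prime}, which forces the arithmetic constraint $q\mid h$ or $q\mid k$ and produces the two $Z'$-terms with the expected coefficients $\chi(k)/q^\delta$ and $\overline\chi(h)/q^\beta$. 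The poles at $2s=-\alpha-\gamma$ and $2s=-\beta-\delta$ contribute precisely $-J^{(1)}_{\alpha,\gamma}-J^{(1)}_{\beta,\delta}$, and the analogous treatment of $I_O^{(2)}$ gives $-J^{(2)}_{\alpha,\gamma}-J^{(2)}_{\beta,\delta}$; these cancel the corresponding terms in Proposition \ref{diagonals prop} so that only the six stated main terms survive.

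The main obstacle is the control of the error term and the clean extraction of the $\mathbf{1}_{q\mid h}$ and $\mathbf{1}_{q\mid k}$ conditions. After Voronoi the leftover sum is a weighted Kloosterman-type sum over $c$, and $E(T)$ is obtained by combining the Weil bound with the size of the dual sums; the balance between the delta-method modulus $C$ and the dual lengths is what fixes the threshold $hk\leq T^{2/11-\epsilon}$ and the shape of $E(T)$. The delicate point is the arithmetic bookkeeping: the Gauss sums produced by Voronoi applied to $f_{\alpha,\beta}(\cdot,\chi)$ must be disentangled from the Ramanujan-type sum modulo $c$ coming from the $\delta$-method, and one must verify that after factoring $c=c_1c_2$ the local $p$-factors at primes $p\mid q$ reassemble exactly into the Euler factors that turn $A$ into $A'$ and that force the divisibility $q\mid h$ (or $q\mid k$). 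With this identification in hand, the rest of the argument proceeds as a careful adaptation of the Hughes--Young analysis, the novelty being the systematic appearance of the two $Z'$ contributions which have no analogue in the $q=1$ case.
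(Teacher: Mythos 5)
Your high-level route (dyadic decomposition, DFI $\delta$-method, Voronoi in each variable, contour shift) does match the paper's. But there is a genuine gap in the arithmetic bookkeeping, which is precisely the "delicate point" you flag but do not resolve, and your proposed resolution is incorrect.

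You assert that "the residue at $s=0$ coming from the part of the $c$-sum with $(c,q)=1$ yields the two swapped-shift contributions $Z_{-\gamma,\beta,-\alpha,\delta,h,k}(0)$ and $Z_{\alpha,-\delta,\gamma,-\beta,h,k}(0)$," and that the $q\mid c$ part produces the two $Z'$-terms. This mis-identifies the source of the main terms. After Voronoi summation, the relevant invariants are $(c_{(h)},q)$ and $(c_{(k)},q)$ \emph{separately} (since the additive twists enter with $h$ and $k$ respectively and must be reduced to lowest terms), and there are nine cases, not two. The four that contribute main terms are: (i) both gcds $=1$, which forces $(c,q)=1$; (ii) both gcds $=q$, which forces $q\mid c$ and, in fact, $qh(q)k(q)\mid c$; (iii),(iv) the mixed cases, which also force $q\mid c$ and additionally force $q\mid h$ or $q\mid k$. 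In case (ii) \emph{each} of the two Voronoi residues carries a Gauss sum and they combine to a power of $q$, leaving an $A$-type Euler factor; this produces $Z_{\alpha,-\delta,\gamma,-\beta,h,k}(0)$ (with the $(qt/2\pi)^{-\beta-\delta}$ weight), and it lives entirely in the range $q\mid c$. Only in the mixed cases is a single unbalanced Gauss sum left over, turning $A$ into $A'$ and producing the $Z'$-terms. So your dichotomy $(c,q)=1\leftrightarrow Z$ vs. $q\mid c\leftrightarrow Z'$ is false: both $q\mid c$ and $(c,q)=1$ feed into the $Z$-terms, and only the asymmetric sub-case of $q\mid c$ feeds into the $Z'$-terms. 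You also omit the case $1<(c_{(h)},q)<q$ (or similarly for $k$), where the dualised Dirichlet series is entire and contributes no residue at all.

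A second, smaller gap: you claim the poles at $2s=-\alpha-\gamma$ and $2s=-\beta-\delta$ "contribute precisely" the $J$-terms, but this identity is not automatic — it requires verifying that the residue of the post-Voronoi zeta factors, multiplied by the finite Euler product $C_{11}$ (resp. $C_{22}$) evaluated at the pole, agrees with $\operatorname{Res}Z_{\boldsymbol\alpha,h,k}(2s)$ from the diagonal computation. Likewise, your plan to "treat $I_O^{(1)}$ in detail and then transfer to $I_O^{(2)}$" understates a genuine structural step: after the contour shift in $I^{(1)}_{11}$ one is left with an integral on $\Re(s)=-\varepsilon$ that must be paired with $I^{(2)}_{22}$ (and $I^{(1)}_{22}$ with $I^{(2)}_{11}$), and this pairing depends on a functional equation for the finite Euler products relating $C_{11,\boldsymbol\alpha,h,k}(-s)$ to $C_{22,-\boldsymbol\gamma,h,k}(s)$. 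Without this identity the leftover integrals do not vanish and the main terms do not assemble.
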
  
By combining this with Proposition \ref{diagonals prop} the $J$ terms cancel and we get Theorem \ref{main thm}. To prove Proposition \ref{off diagonals prop} we first prepare $I^{(1)}_O(h,k)$ for an application of the methods in \cite{dfi}. The results of this application are then given in section \ref{section 5} where we see that $I^{(1)}_O(h,k)$ can be expressed as a sum of four main terms. These terms are then manipulated in sections \ref{section 6} and \ref{section 7} and we find that by combining them with their counterparts in $I^{(2)}_O(h,k)$ we get a cancellation. The remaining terms are,  in fact, undercover versions of the terms in Proposition \ref{off diagonals prop} and the rest of the paper is devoted to unveiling them.

\section{The Off-Diagonals: A Dyadic Partition of Unity}
By using (\ref{g asymp}) and shifting the $s$-line of integration to the right we note that we may truncate the sums over $m$, $n$ in (\ref{I^1}) and (\ref{I^2}) so that $mn\leq T^{2+\epsilon}$ . Now, let 
\begin{equation}\label{F^*}F^*(x,y)=\frac{1}{2\pi i}\int_{(\varepsilon)}\frac{G(s)}{s}\left(\frac{\pi^2xy}{hkq}\right)^{-s}\frac{1}{T}\int_{-\infty}^\infty\left(\frac{x}{y}\right)^{-it}g_{\boldsymbol{\alpha}}(s,t)w(t)dtds
\end{equation}
and let 
\begin{equation}\label{24}I^{(1)}_O(h,k)=T\sum_{\substack{m,n\\ hm\neq kn}}\frac{f_{\alpha,\beta}(n,\chi)f_{\gamma,\delta}(m,\overline{\chi})}{(mn)^{1/2}}F^*(hm,kn)
\end{equation}
so that
\[I^{(1)}(h,k)=I^{(1)}_D(h,k)+I^{(1)}_O(h,k).\]
We wish to apply the results of \cite{dfi} to $I^{(1)}_O(h,k)$. To do this we follow \cite{hughes and young} and first apply a dyadic partition of unity for the sums over $m$ and $n$.

 Let $W_0(x)$ be a smooth, nonnegative function with support in $[1,2]$ such that 
\[\sum_M W_0(x/M)=1,\]
where $M$ runs over a sequence of real numbers with $\#\{M:M\leq X\}\ll \log X$. Let
\begin{equation}I_{M,N}(h,k)=\frac{T}{\sqrt{MN}}\sum_{\substack{m,n\\ hm\neq kn}}f_{\alpha,\beta}(n,\chi)f_{\gamma,\delta}(m,\overline{\chi})W\left(\frac{m}{M}\right)W\left(\frac{n}{N}\right)F^*(hm,kn)\end{equation}
where 
\[W(x)=x^{-1/2}W_0(x).\]
Then
\begin{equation}\sum_{M,N}I_{M,N}(h,k)=I_O^{(1)}(h,k)\end{equation}
and note that by the first remark of this section we may take $MN\leq T^{2+\epsilon}$.

We can show that the main contribution to $I_{M,N}(h,k)$ comes from the terms which are close to the diagonal. To demonstrate this we can use integration by parts on the innermost integral of $F^*(x,y)$ and thereby take advantage of the oscillatory factor $(x/y)^{-it}$. This gives 
\begin{eqnarray}\label{int by parts}\frac{1}{T}\int_{-\infty}^\infty (x/y)^{-it}g(s,t)w(t)dt&\ll& \frac{1}{T|\log(x/y)|^j}\int_{-\infty}^\infty\left|g^{(0,j)}(s,t)w^{(j)}(t)\right|dt
\\\nonumber&\ll&\frac{P_j(|s|)T^{2\Re(s)}}{T_0^j|\log(x/y)|^j}.
\end{eqnarray} 
for any $j=0,1,2,\ldots$ where $P_j$ is a polynomial. If $|\log(x/y)|\geq T_0^{-1+\epsilon}$ then the above bound can be made arbitrariy small by taking $j$ large. Hence, on writing $hm-kn=r$, we get
\begin{equation}\begin{split}I_{M,N}(h,k)=&\frac{T}{\sqrt{MN}}\sum_{r\neq 0}\sum_{\substack{hm-kn=r\\|\log(hm/kn)|\ll T_0^{-1+\epsilon}}}f_{\alpha,\beta}(n,\chi)f_{\gamma,\delta}(m,\overline{\chi})
\\&\times W\left(\frac{m}{M}\right)W\left(\frac{n}{N}\right) F^*(hm,kn)+O(T^{-1984}).
\end{split}\end{equation}
Now, $r/kn\asymp|\log(1+r/kn)|\ll T_0^{-1+\epsilon}$ and therefore $r\ll knT_0^{-1+\epsilon}\ll \sqrt{hkMN}T_0^{-1}T^\epsilon$ since $n\asymp N$ and $hM\asymp kN$ in the above sum. Summarising;

\begin{prop}\label{I_M,N prop}We have 
\begin{equation}\begin{split}I_{M,N}(h,k)=&\frac{T}{\sqrt{MN}}\sum_{0<|r|\ll\frac{\sqrt{hkMN}}{T_0}T^\varepsilon}\sum_{hm-kn=r}f_{\alpha,\beta}(n,\chi)f_{\gamma,\delta}(m,\overline{\chi})F(hm,kn)\\&+O(T^{-1984})\end{split}\end{equation}
where
\begin{equation}\begin{split}F(x,y)=&W\left(\frac{x}{hM}\right)W\left(\frac{y}{kN}\right)\frac{1}{2\pi i}\int_{(\varepsilon)}\frac{G(s)}{s}\left(\frac{\pi^2xy}{hkq}\right)^{-s}
\\&\times\frac{1}{T}\int_{-\infty}^\infty\left(1+\frac{r}{y}\right)^{-it}g(s,t)w(t)dtds.
\end{split}\end{equation}
\end{prop}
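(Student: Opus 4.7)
The argument is essentially assembled from the estimates already derived in this section; the task is to organize it. The plan is as follows. First, I would consider the inner $t$-integral appearing in $F^*(hm,kn)$ and apply integration by parts $j$ times, using the oscillatory factor $(hm/kn)^{-it}$ as the derivative (up to a constant) of $(hm/kn)^{-it}/(-i\log(hm/kn))$. Each application produces a factor $|\log(hm/kn)|^{-1}$ and transfers a derivative onto $g_{\boldsymbol{\alpha}}(s,t)w(t)$. Using $w^{(j)}(t) \ll_j T_0^{-j}$ together with the Stirling bound $g_{\boldsymbol{\alpha}}^{(0,j)}(s,t) \ll P_j(|s|)T^{2\Re(s)}T_0^{-j}$ on the support of $w$, and noting that $T_0 \leq T$, one obtains the estimate displayed in (\ref{int by parts}).

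Next, I would fix the contour $\Re(s) = \epsilon$, so that $T^{2\Re(s)} = T^{2\epsilon}$. The rapid decay of $G(s)$ in vertical strips absorbs the polynomial factor $P_j(|s|)$ coming from the derivatives of $g_{\boldsymbol{\alpha}}$, leaving the contribution of a given pair $(m,n)$ bounded by $T^{2\epsilon} T_0^{-j} |\log(hm/kn)|^{-j}$. For pairs with $|\log(hm/kn)| \geq T_0^{-1+\epsilon}$ this is at most $T^{2\epsilon} T_0^{-j\epsilon}$, which is smaller than any fixed negative power of $T$ upon taking $j$ sufficiently large (using $T_0 \geq T^{1/2+\epsilon}$). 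Since there are at most $T^{2+\epsilon}$ pairs $(m,n)$ in the truncated range $mn \leq T^{2+\epsilon}$, and the coefficients $f_{\alpha,\beta}(n,\chi), f_{\gamma,\delta}(m,\overline{\chi})$ are bounded by divisor functions, the total contribution of these far-from-diagonal pairs is absorbed into the error $O(T^{-1984})$.

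For the remaining terms, I would set $r = hm - kn \neq 0$. The restriction $|\log(hm/kn)| \ll T_0^{-1+\epsilon}$ forces $hm/kn$ close to $1$, so $|r|/(kn) \asymp |\log(1+r/(kn))| \ll T_0^{-1+\epsilon}$. In the dyadic range $n \asymp N$, and the proximity $hm \asymp kn$ forces $hM \asymp kN$, so $kn \asymp \sqrt{hkMN}$. This yields the truncation $|r| \ll \sqrt{hkMN}\,T_0^{-1} T^\epsilon$ stated in the proposition.

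Finally, I would insert the dyadic cutoffs $W(m/M)W(n/N)$ into the summand and rewrite $(hm/kn)^{-it} = (1+r/(kn))^{-it}$, with the second argument of $F$ playing the role of $kn$. This repackages $F^*$ into the function $F(x,y)$ displayed in the proposition. The only obstacle worth noting is controlling the polynomial $P_j(|s|)$ as $j$ grows; this follows from iterating Stirling's formula on the gamma ratios in $g_{\boldsymbol{\alpha}}$, and after contour truncation this growth is harmless. Neither this nor the accumulation of errors over the $O(\log^2 T)$ dyadic pairs $(M,N)$ presents any real difficulty.
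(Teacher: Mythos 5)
Your proposal is correct and follows the paper's argument closely: integration by parts to localize to $|\log(hm/kn)| \ll T_0^{-1+\epsilon}$, the deduction $|r| \ll \sqrt{hkMN}\,T_0^{-1}T^\epsilon$ from $n \asymp N$ and $hM \asymp kN$, and the cosmetic repackaging of $W(m/M)W(n/N)F^*(hm,kn)$ as $F(hm,kn)$ with $(hm/kn)^{-it}=(1+r/kn)^{-it}$. The only difference is that you spell out details the paper leaves implicit (the count of $(m,n)$ pairs, the control of $P_j(|s|)$ via the decay of $G$), which is just the same proof made more explicit.
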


\section{The Delta Method}\label{section 5}
One can show after a short computation that
\begin{equation}\label{conditions F}x^iy^jF^{(i,j)}(x,y)\ll \left(1+\frac{x}{X}\right)^{-1}\left(1+\frac{y}{Y}\right)^{-1}P^{i+j}
\end{equation}
where $X=hM$, $Y=kN$ and $P=(hkq)^\epsilon T^{1+\epsilon}/T_0$. It should also be noted that  $F$ has compact support in the box $[X,2X]\times [Y,2Y]$ due to the support conditions on $W$. Now, let

\begin{equation}D_F(h,k,r)=\sum_{\substack{m,n\\hm-kn=r}}f_{\alpha,\beta}(m,\chi)f_{\gamma,\delta}(n,\overline{\chi})F(hm,kn).
\end{equation}
The above observations on $F$ make this sum well suited to an appliction of the main result of \cite{dfi}. Following their method we will show that 
\begin{equation*}\begin{split}D_F(h,k,r)=&\sum_{i,j=1}^2\frac{1}{h^{1-a_i}k^{1-b_j}}S_{ij}(h,k,r)\int_{\max(0,r)}^\infty x^{-a_i}(x-r)^{-b_j}F(x,x-r)dx 
\\&+(\mathrm{\,Error\,\,\,term})\end{split}\end{equation*}
where the $a_i$, $b_j$ are particular shifts and $S_{ij}(h,k,q,r)$ are certain infinite sums.

 Before applying the $\delta$-method we first attach to $F(x,y)$ a redundant factor $\phi(x-y-r)$ where $\phi(u)$ is a smooth function supported on $(-U,U)$ such that $\phi(0)=1$ and $\phi^{(i)}\ll U^{-i}$. $U$ will be chosen optimally later.  Denote the new function by $F_\phi(x,y)=F(x,y)\phi(x-y-r)$ and note $D_F(h,k,r)=D_{F_\phi}(h,k,r)$. The derivatives of the new function satisfy
\begin{equation}\label{F der}F_\phi^{(i,j)}\ll \left(\frac{1}{U}+\frac{P}{X}\right)^i\left(\frac{1}{U}+\frac{P}{Y}\right)^j\ll U^{-i-j}
\end{equation}
provided that $U\leq P^{-1}\min(X,Y)$ which we henceforth assume.

\subsection{Setting up the $\delta$-method}
Throughout this section we closely follow \cite{dfi}. Let $\omega$ be a smooth function of compact support in $[\Omega,2\Omega]$ such that
\begin{equation}\sum_{d\geq1}\omega(d)=1,\,\,\omega^{(i)}\ll \Omega^{-i-1},\,i\geq 0.\end{equation}
Then the $\delta$ symbol, which is equal to 1 for $n=0$ and 0 for $n\in\mathbb{Z}\backslash\{0\}$, can be given in terms of Ramanujan sums
\begin{equation}\label{delta}\delta(n)=\sum_{d=1}^\infty\Delta_d(n)\sum_{\substack{c=1\\(c,d)=1}}^d e_d(cn) \end{equation}
where 
\begin{equation}\Delta_d(u)=\sum_{m=1}^\infty (dm)^{-1}\left(\omega(dm)-\omega\left(\frac{u}{dm}\right)\right).
\end{equation}
 Note the derivatives of $\Delta_d(u)$ satisfy
\begin{equation}\label{Delta der}\Delta_d^{(i)}(u)\ll (d\Omega)^{-i-1},\,\,i>0.
\end{equation}
Choosing $\Omega=U^{1/2}$ we see that $\Delta_d(u)$ vanishes if $|u|\leq U$ and $d\geq 2\Omega$. Therefore, using (\ref{delta}), 
\begin{equation}\begin{split}\label{D}D_F(h,k,r)
=&\sum_{m,n}f_{\alpha,\beta}(m,\chi)f_{\gamma,\delta}(n,\overline{\chi})F_\phi(hm,kn)\delta(hm-kn-r)
\\=&\sum_{d<2\Omega}\sum_{\substack{c=1\\(c,d)=1}}^d e_d(-cr)\sum_{m,n}f_{\alpha,\beta}(m,\chi)f_{\gamma,\delta}(n,\overline{\chi})e_d(chm)e_d(-ckn)F^\sharp(m,n)
\end{split}\end{equation}
where $F^\sharp(x,y)=F_\phi(hx,ky)\Delta_d(hx-ky-r)$. We now evaluate the innermost sum using standard techniques.

\subsection{A Voronoi Summation Formula}
We first consider the dirichlet series
\begin{equation}\label{E function}E_{\alpha,\beta}(s,c/d,\chi)=\sum_{n=1}^\infty \frac{f_{\alpha,\beta}(n,\chi) e_d(cn)}{n^s}
\end{equation}
where $(c,d)=1$.
The analytic behaviour of $E_{\alpha,\beta}(s,c/d,\chi)$ is described in \cite{furuya}, \cite{muller} albeit without the shifts. Incorporating them into the following proofs requires little extra effort and so we leave the details to the reader. From Lemma 1 of \cite{muller} we have

\begin{lem}\label{res E}Let $(c,d)=1$. If $1<\varrho:=(d,q)<q$ then $E_{\alpha,\beta}(s,c/d,\chi)$ is entire. If either $\varrho=1$ or $\varrho=q$  then it is meromorphic with a single simple pole at either $s=1-\alpha$ or $s=1-\beta$ respectively. The residues are given by
\begin{equation}\label{res E1}\underset{\overset{s=z}{}}{\mathrm{Res}}\,E_{\alpha,\beta}(s,c/d,\chi)=\begin{cases}\frac{\chi(d)L(1-\alpha+\beta,\chi)}{d^{1-\alpha+\beta}}, &\text{if $\varrho=1$, $z=1-\alpha$}\\
\frac{\overline{\chi}(c)G(\chi)L(1+\alpha-\beta,\overline{\chi})}{q^{\beta-\alpha}d^{1+\alpha-\beta}}, &\text{if $\varrho=q$, $z=1-\beta$}
\end{cases}.
\end{equation}
\end{lem}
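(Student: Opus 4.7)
The plan is to expand the Dirichlet convolution defining $f_{\alpha,\beta}$ to get
\[E_{\alpha,\beta}(s,c/d,\chi)=\sum_{n_1,n_2\ge1}\frac{\chi(n_2)\,e_d(cn_1n_2)}{n_1^{s+\alpha}n_2^{s+\beta}},\]
and then to locate singularities by summing one variable at a time. Two candidate poles present themselves: $s=1-\alpha$, from the $n_1$-sum reducing to $\zeta(s+\alpha)$, and $s=1-\beta$, from the $n_2$-sum reducing to an ordinary $L$-series.

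For $s=1-\alpha$ I would fix $n_2$ and note that $\sum_{n_1}e_d(cn_1n_2)n_1^{-s-\alpha}$ is a Lerch-type series, entire in $s$ unless the additive character $n_1\mapsto e_d(cn_1n_2)$ is trivial; since $(c,d)=1$ this happens precisely when $d\mid n_2$. Isolating these $n_2$ produces the contribution $\chi(d)d^{-s-\beta}\zeta(s+\alpha)L(s+\beta,\chi)$, so the pole at $s=1-\alpha$ is present if and only if $\chi(d)\neq 0$, i.e.\ $\varrho=1$, with residue $\chi(d)d^{-1+\alpha-\beta}L(1-\alpha+\beta,\chi)$, matching the first case of \eqref{res E1}.

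For $s=1-\beta$ I would instead fix $n_1$ and use the periodicity of $\chi(n_2)e_d(cn_1n_2)$ modulo $D:=\mathrm{lcm}(d,q)=dq/\varrho$ to expand the $n_2$-sum as a linear combination of Hurwitz zetas, whose residue at $s=1-\beta$ is $D^{-1}S(n_1)$ with
\[S(n_1)=\sum_{b\bmod D}\chi(b)e_d(cn_1b).\]
Writing $b=u+vd$ with $d=\varrho d'$, $q=\varrho q'$, $(d',q')=1$, the inner $v$-sum reduces to $\sum_{v'\bmod q/\varrho}\chi(u+v'\varrho)$. By primitivity of $\chi$ modulo $q$ this vanishes for every $1\le\varrho<q$, and combined with the previous step this proves that $E_{\alpha,\beta}$ is entire throughout $1<\varrho<q$. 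For $\varrho=q$ (so $D=d$ and $v$ is forced to $0$), $S(n_1)$ becomes $\sum_{u=0}^{d-1}\chi(u)e_d(cn_1u)$; partitioning $u$ modulo $q$ and applying additive orthogonality gives $S(n_1)=(d/q)\mathbf{1}_{(d/q)\mid n_1}\,G(cn_1q/d,\chi)$, and primitivity forces this Gauss sum to equal $\overline{\chi(c)}\,\overline{\chi(n_1q/d)}\,G(\chi)$ when the argument is coprime to $q$ and to vanish otherwise. Substituting $n_1=(d/q)m$ and summing $q^{-1}\overline{\chi(c)}\,\overline{\chi(m)}\,G(\chi)$ against $m^{-1-\alpha+\beta}$, weighted by $(d/q)^{-1-\alpha+\beta}$, delivers exactly $\overline{\chi}(c)G(\chi)L(1+\alpha-\beta,\overline{\chi})/(q^{\beta-\alpha}d^{1+\alpha-\beta})$, as claimed.

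The main obstacle is the primitivity argument that forces $\sum_{v'\bmod q/\varrho}\chi(u+v'\varrho)$ to vanish for $1\le\varrho<q$. The cleanest route is to choose a lift $u\in(\mathbb{Z}/q)^{\ast}$ and recognise $\{u+v'\varrho\bmod q:v'\}$ as the coset $uH$ of the kernel $H$ of the reduction map $(\mathbb{Z}/q)^{\ast}\twoheadrightarrow(\mathbb{Z}/\varrho)^{\ast}$, so that the sum equals $\chi(u)\sum_{h\in H}\chi(h)$; this is nonzero only if $\chi|_{H}=1$, which would force the conductor of $\chi$ to divide $\varrho<q$, contradicting primitivity. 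The other technical hurdle is the bookkeeping in the $\varrho=q$ case, where one must reconcile the divisibility constraint $(d/q)\mid n_1$ with the coprimality $(n_1q/d,q)=1$ that is needed in order to evaluate the Gauss sum cleanly as $G(m,\chi)=\overline{\chi(m)}G(\chi)$.
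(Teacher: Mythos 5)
The paper does not actually prove Lemma~\ref{res E}: it simply quotes it from Lemma~1 of M\"uller \cite{muller} (and Furuya \cite{furuya}), whose proofs go through the functional equation of $E_{\alpha,\beta}$ via Mellin-transform techniques. Your argument is an independent, direct computation working straight from the double Dirichlet series, and on checking it through it is correct and delivers exactly the claimed residues. The key structural observation is sound: splitting $E=\frac{D^{-1}}{s+\beta-1}\sum_{n_1}n_1^{-s-\alpha}S(n_1)+\Psi(s)$, with $\Psi$ entire away from $s=1-\alpha$, shows the residue at $s=1-\beta$ is the analytically continued value $D^{-1}\sum_{n_1}n_1^{-1+\beta-\alpha}S(n_1)$ and that no stray poles are hiding; the symmetric decomposition in $n_2$ handles $s=1-\alpha$. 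What your route buys is that it is more elementary (no functional equation or Gauss--sum transform of the whole series is needed) and makes the role of primitivity transparent; the M\"uller/Furuya route is heavier machinery but sets up the functional equation of $E$ which the paper needs anyway for the Voronoi summation (Proposition~\ref{voronoi}).

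Two small points you could tighten. First, in the primitivity step you lift $u$ into $(\mathbb{Z}/q)^{\ast}$, which is only possible when $(u,\varrho)=1$; you should add the observation that when $(u,\varrho)>1$ every term $\chi(u+v'\varrho)$ vanishes outright, so the sum is trivially zero — together with your coset argument this covers all $u\bmod d$. (Alternatively, the cleanest uniform proof is the Gauss-sum one: writing the mod-$\varrho$ indicator additively gives $\sum_{w\equiv u(\varrho)}\chi(w)=\varrho^{-1}\sum_{j\bmod\varrho}e_\varrho(-ju)\,\overline{\chi}(jq/\varrho)G(\chi)$, and $(jq/\varrho,q)\ge q/\varrho>1$ kills every term.) Second, the ``technical hurdle'' you flag about reconciling $(d/q)\mid n_1$ with $(n_1q/d,q)=1$ is a non-issue: for primitive $\chi$ the identity $G(m,\chi)=\overline{\chi}(m)G(\chi)$ holds for \emph{all} integers $m$ (both sides vanish when $(m,q)>1$), so the $L$-series $\sum_m\overline{\chi}(m)m^{-1-\alpha+\beta}$ assembles itself with no extra bookkeeping.
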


For the functional equation of $E_{\alpha,\beta}(s,c/d,\chi)$ we follow the methods of Furuya given in \cite{furuya}.  Our functional equation will  be written in terms of the Dirichlet series 
\begin{equation}\label{E tilde}\tilde{E}_{\alpha,\beta}(s,c/d,\chi)=\sum_{n=1}^\infty \frac{\sigma_{\alpha,\beta}(n,c/d,\chi)}{n^s}
\end{equation}
where
\begin{equation}\sigma_{\alpha,\beta}(n,c/d,\chi)=\sum_{uv=n}u^{-\alpha}v^{-\beta}\sum_{\substack{b=1\\ b\equiv cu(\mathrm{mod}\,d)}}^{d_1}\chi(b)e_{d_1}(bv)
\end{equation}
and $d_1=dq/\varrho$, the least common multiple of $d$ and $q$. Before giving the functional equation we present some results on $\sigma_{\alpha,\beta}(n,c/d,\chi)$,  these will be used later. Firstly, a trivial estimate gives that $\sigma_{\alpha,\beta}(n,c/d,\chi)\ll q f_{\Re\alpha,\Re\beta}(n,|\chi|)/\varrho$ and so (\ref{E tilde}) converges absolutely for $\Re s>1-\min (\Re \alpha,\Re\beta)$.
 It should also be noted that $\sigma_{\alpha,\beta}(s,c/d,\chi)$ is quite similar to $f_{\alpha,\beta}(n,\chi)$ when $\varrho=1$ or $\varrho=q$. Indeed, we have
\begin{equation}\label{sig 1}\sigma_{\alpha,\beta}(n,c/d,\chi)=\begin{cases}\chi(d)e_d(c\overline{q}n)G(\chi)f_{\alpha,\beta}(n,\overline{\chi}) &\text{if $\varrho=1$,}\\\chi(c)e_d(cn)f_{\beta,\alpha}(n,\chi) &\text{if $\varrho=q$.}\end{cases}
\end{equation}
The case $\varrho=q$ is easily seen since in this instance $d_1=d$. This implies a unique solution (mod $d_1$) to the equation $b\equiv cu \,(\mathrm{mod}\,d)$. For the case $\varrho=1$ we have the following method which also gives insight into the cases $1<\varrho<q$. Let $b=j+ql$ where $1\leq j\leq q$ and $0\leq l\leq d/\varrho-1$. Then 
\begin{equation}\label{sig 2}\sigma_{\alpha,\beta}(n,c/d,\chi)=\sum_{uv=n}u^{-\alpha}v^{-\beta}\sum_{j=1}^q \chi(j)e_{d_1}(jv)\sum_{\substack{l=0\\ql\equiv cu-j (d)}}^{d/\varrho-1}e_{d/\varrho}(lv).
\end{equation}
If we now put $\varrho=1$ then $l$ is uniquely determined (mod $d$) by $l\equiv\overline{q}(cu-j)$ (mod $d$).  Therefore, in this case
\begin{equation}\begin{split}\sigma_{\alpha,\beta}(n,c/d,\chi)=&\sum_{uv=n}u^{-\alpha}v^{-\beta}\sum_{j=1}^q \chi(j)e_{dq}(jv)e_d(\overline{q}(cu-j)v)\\
=&e_d(c\overline{q}n)\sum_{uv=n}u^{-\alpha}v^{-\beta}\sum_{j=1}^q \chi(j)e_q(-rjv)\\
=&e_d(c\overline{q}n)\sum_{uv=n}u^{-\alpha}v^{-\beta}\sum_{j=1}^q G(-rv,\chi)
\end{split}\end{equation}
where $r$ is the integer such that $q\overline{q}=1+rd$. Formula (\ref{sig 1}) for $\varrho=1$ now follows on noting that  $\overline{\chi}(-r)=\chi(d)$.

 In the remaining cases $1<\varrho<q$ we return to formula (\ref{sig 2}) and write $Q=q/\varrho$, $D=d/\varrho$.  Now, a necessary condition for the existence of a solution to the congruence $ql\equiv cu-j$ (mod $d$)  is that $\varrho| cu-j$. In this case $l$ is uniquely determined (mod $D$) by $l\equiv \overline{Q}(cu-j)/\varrho$ (mod $D$). Therefore 
\begin{equation}\begin{split}&\sum_{j=1}^q \chi(j)e_{d_1}(jv)\sum_{\substack{l=0\\ql\equiv cu-j (d)}}^{d/\varrho-1}e_{d/\varrho}(lv)
\\&=\frac{1}{\varrho}\sum_{j=1}^q \chi(j)e_{d_1}(jv)\sum_{m=1}^\varrho e_\varrho(-m(cu-j))e_d(\overline{Q}(cu-j)v)
\\&=\frac{1}{\varrho}e_d(c\overline{Q}uv)\sum_{m=1}^\varrho e_\delta(-mcu)\sum_{j=1}^q \chi(j)e_{d_1}\left(j\left(mQD+v(1-Q\overline{Q})\right)\right).
\end{split}\end{equation}
Therefore, for $1<\varrho<q$ we have 
\begin{equation}\label{sig 3}\sigma_{\alpha,\beta}(n,c/d,\chi)=\frac{1}{\varrho}e_d(nc\overline{Q})G(\chi)\sum_{uv=n}u^{-\alpha}v^{-\beta}\sum_{m=1}^\varrho \overline{\chi}(mQ-vw)e_\varrho(-mcu)
\end{equation}
where $w$ is defined by the equation $Q\overline{Q}=1+wD$.

We now present the functional equation of $E_{\alpha,\beta}(s,c/d,\chi)$. Following Lemma 3 of \cite{furuya} we get 

\begin{lem}\label{functional E}Let $\tilde{E}_{\alpha,\beta}(s,c/d,\chi)$ be given by (\ref{E tilde}). Then the  functional equation of $E_{\alpha,\beta}(s,c/d,\chi)$ is given by 
\begin{equation}\label{functional E 1}E_{\alpha,\beta}(s,c/d,\chi)=H(s)\bigg[\theta\cdot\tilde{E}_{-\alpha,-\beta}(1-s,\overline{c}/d,\chi)-\theta(s)\tilde{E}_{-\alpha,-\beta}(1-s,-\overline{c}/d,\chi)\bigg]
\end{equation}
where 
\begin{align}H(s)&=H_{\alpha,\beta}(s,d,q)=\frac{(2\pi)^{2s-2+\alpha+\beta}}{d^{2s-1+\alpha+\beta}}\left(\frac{\varrho}{q}\right)^{s+\beta}\Gamma(1-s-\alpha)\Gamma(1-s-\beta),\\\theta(s)&=\theta_{\alpha,\beta,\chi}(s)=e^{\pi i \left(s+\frac{\alpha+\beta}{2}\right)}+\chi(-1)e^{-\pi i \left(s+\frac{\alpha+\beta}{2}\right)}\end{align} and $\theta=\theta(-\beta)$.
\end{lem}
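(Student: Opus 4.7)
The plan is to adapt Furuya's derivation of the functional equation for the un-shifted twisted divisor $L$-function, carrying the complex shifts $\alpha,\beta$ through every step. First I would write
\[
E_{\alpha,\beta}(s,c/d,\chi)=\sum_{u,v\geq 1}\frac{\chi(v)\,e_d(cuv)}{u^{s+\alpha}\,v^{s+\beta}}
\]
and, holding $u$ fixed, sort the $v$-sum by residue class $b$ modulo $d_1=dq/\varrho$, since both $\chi(v)$ and $e_d(cuv)=e_{d_1}((q/\varrho)cuv)$ depend on $v$ only through its class mod $d_1$. The inner sum then becomes a finite linear combination of Hurwitz zeta values $\zeta(s+\beta,b/d_1)$, $1\le b\le d_1$, with coefficients $\chi(b)\,e_{d_1}((q/\varrho)cub)$.

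Next I would apply the Hurwitz functional equation to each of these $\zeta(s+\beta,b/d_1)$, producing the gamma factor $\Gamma(1-s-\beta)$, powers of $d_1$ and $2\pi$, and a dual series in a new variable $n$ with exponentials $e(\pm bn/d_1)$. Collapsing the $b$-sum against $\chi(b)\,e_{d_1}((q/\varrho)cub)$ rearranges into precisely the character combination
\[
\sum_{\substack{b=1\\b\equiv cu\,(d)}}^{d_1}\chi(b)\,e_{d_1}(bn)
\]
that appears inside the definition of $\sigma_{-\alpha,-\beta}(N,\pm\overline{c}/d,\chi)$ once $N=uv$ is reconstructed from its divisors. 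Returning to the outer sum in $u$, which after the first dualization has become $\sum_u u^{-s-\alpha}$ twisted by an exponential linear in $u$, a second Hurwitz-type functional equation yields the companion factor $\Gamma(1-s-\alpha)$ and a further exponential expansion, so that the whole right-hand side assembles into $\tilde{E}_{-\alpha,-\beta}(1-s,\pm\overline{c}/d,\chi)$.

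The two pairs of sine/cosine factors arising from the two Hurwitz functional equations combine, via the standard trigonometric identities together with the parity relation encoded by $\chi(-1)$, into the expressions $\theta(s)$ and $\theta=\theta(-\beta)$; the two sign choices $\pm\overline{c}/d$ correspond to the two Fourier duals. Collecting all constants — powers of $2\pi$, $d$, $d_1$, $\varrho$, $q$, and the two gamma factors — produces the explicit form of $H(s)$ stated in the lemma.

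The main obstacle is the case analysis for $\varrho=(d,q)$. One must verify that the final character/exponential weight produced by the two dualizations agrees exactly with $\sigma_{-\alpha,-\beta}(N,\pm\overline{c}/d,\chi)$ in each of the three regimes $\varrho=1$, $1<\varrho<q$, $\varrho=q$; this is the same Chinese-remainder unwinding already performed in display lines (\ref{sig 1})--(\ref{sig 3}), and requires careful identification of the Dirichlet inverse $\overline{c}$ together with separation of the $e_d$ and $e_q$ phases. The series $\tilde{E}_{-\alpha,-\beta}(1-s,\pm\overline{c}/d,\chi)$ converges absolutely for $\Re(s)<1-\min(\Re\alpha,\Re\beta)$, so the formal manipulations are justified in that half-plane, and meromorphic continuation — controlled by Lemma \ref{res E} — extends the identity to all $s$.
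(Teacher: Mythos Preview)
Your outline is the same route the paper takes --- it simply cites Furuya's Lemma~3 and remarks that carrying the shifts $\alpha,\beta$ through is routine. The one place your sketch goes astray is the claim that the $b$-sum ``collapses'' to the constrained form $\sum_{b\equiv cu\,(d)}\chi(b)\,e_{d_1}(bn)$: summing $\chi(b)\,e_{d_1}\bigl(b((q/\varrho)cu\pm n)\bigr)$ freely over $b\pmod{d_1}$ gives a Gauss-type sum, not a congruence restriction, so that step as written does not produce $\sigma_{-\alpha,-\beta}$.

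What actually happens is that one also sorts $u$ into residue classes modulo $d$ (the factor $e_d(cuv)$ has period $d$ in $u$), applies the Hurwitz functional equation to \emph{both} Hurwitz zeta pieces, and then it is the sum over the residue class $a$ of $u$, namely $\sum_{a\bmod d} e_d\bigl(a(cb\pm u')\bigr)$, that collapses by orthogonality of additive characters to the constraint $b\equiv \mp\,\overline{c}\,u'\pmod d$. This constraint is precisely the weight appearing in the definition of $\sigma_{-\alpha,-\beta}(n,\pm\overline{c}/d,\chi)$, and the two $\tilde E$-terms with $\pm\overline{c}/d$ arise from the two sign choices in the Hurwitz expansion. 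Finally, the $\varrho$-case split you flag as the main obstacle is not needed for the functional equation itself --- it holds uniformly in $\varrho$ --- and the formulas (\ref{sig 1})--(\ref{sig 3}) are only used later to simplify $\sigma$ in the extremal cases $\varrho=1$ and $\varrho=q$.
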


With lemmas \ref{res E} and \ref{functional E} we now derive the Voronoi summation formula using the theory of Mellin transforms. An alternative method can be found in \cite{jutila}.

\begin{prop}\label{voronoi}Let $g(x)$ be a smooth, compactly supported function on $\mathbb{R}^+$ and let $(c,d)=1$. Also, let $z$ be equal to either $1-\alpha$ or $1-\beta$ depending on whether $\varrho=1$ or $\varrho=q$ respectively and let $z$ be arbitrary in any other case. Then
\begin{equation}\begin{split}\label{voronoi 1}\sum_{n\geq 1}f_{\alpha,\beta}(n,\chi)e_d(cn)g(n)=&\left(\underset{\overset{s=z}{}}{\mathrm{Res}}\,E_{\alpha,\beta}(s,c/d,\chi)\right)\int_0^\infty x^{z-1}g(x)dx\\&+ \,\sum_{+-}\sum_{n\geq 1}\sigma_{-\alpha,-\beta}\left(n,\pm\frac{\overline{c}}{d},\chi\right)g^{\pm}(n)
\end{split}\end{equation}
where
\begin{equation}g^+(y)=\frac{2\theta}{d}\left(\frac{\varrho}{q}\right)^{1-\frac{\alpha-\beta}{2}}\int_0^\infty g(x)K_{\beta-\alpha}\left(\frac{4\pi \sqrt{\varrho xy/q}}{d}\right)(xy)^{-\frac{\alpha+\beta}{2}}dx
\end{equation}
and
\begin{equation}g^-(y)=-\frac{2\pi}{d}\left(\frac{\varrho}{q}\right)^{1-\frac{\alpha-\beta}{2}}\int_0^\infty g(x)B_{\alpha-\beta}\left(\frac{4\pi \sqrt{\varrho xy/q}}{d}\right)(xy)^{-\frac{\alpha+\beta}{2}}dx.
\end{equation}
Here, $K_\nu(z)$ is the usual Bessel function and $B_\nu(z)$ is defined as
\begin{equation}B_\nu(z)=\begin{cases}\cos(\frac{\pi}{2}\nu) Y_\nu(z)+\sin(\frac{\pi}{2}\nu) J_\nu(z), &\text{if $\chi$ is even}\\ i\cos(\frac{\pi}{2}\nu) J_\nu(z)-i\sin(\frac{\pi}{2}\nu) Y_\nu(z), &\text{if $\chi$ is odd}\end{cases}
\end{equation}
where $Y_\nu(z)$, $J_\nu(z)$ are again the usual Bessel functions.
\end{prop}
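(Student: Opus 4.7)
The strategy is the classical Mellin--Barnes one: convert the sum into a contour integral of $\tilde g(s)E_{\alpha,\beta}(s,c/d,\chi)$, shift past any pole, apply the functional equation of Lemma~\ref{functional E}, and identify the resulting transforms as Bessel integrals. Since $g$ is smooth and compactly supported on $\mathbb{R}^+$, its Mellin transform $\tilde g(s)=\int_0^\infty g(x)x^{s-1}dx$ is entire and decays faster than any polynomial on vertical strips. Choosing $\sigma$ in the region of absolute convergence of $E_{\alpha,\beta}(s,c/d,\chi)$, Mellin inversion and absolute convergence give
\[
\sum_{n\geq 1}f_{\alpha,\beta}(n,\chi)e_d(cn)g(n) = \frac{1}{2\pi i}\int_{(\sigma)}\tilde g(s)\,E_{\alpha,\beta}(s,c/d,\chi)\,ds.
\]

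I would next shift the contour to $\Re(s)=-\sigma'$ for some small $\sigma'>0$. The rapid vertical decay of $\tilde g$ dominates the polynomial growth of $E_{\alpha,\beta}$ in the imaginary direction (which follows from its functional equation together with convexity estimates), so this shift is legitimate. The only potential residue, at $s=z$, is identified by Lemma~\ref{res E}; together with Mellin inversion on $x^{z-1}$ it contributes exactly the main term of \eqref{voronoi 1}.

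On the shifted line I apply the functional equation of Lemma~\ref{functional E}, producing two contour integrals, one containing $H(s)\theta\,\tilde E_{-\alpha,-\beta}(1-s,\overline c/d,\chi)$ and the other containing $-H(s)\theta(s)\,\tilde E_{-\alpha,-\beta}(1-s,-\overline c/d,\chi)$. The substitution $s\mapsto 1-s$ moves the contour into the region of absolute convergence of $\tilde E_{-\alpha,-\beta}$, permitting termwise expansion and interchange with the integral. Writing $\tilde g(1-s)=\int_0^\infty g(x)x^{-s}dx$ and swapping orders once more reduces each of the two pieces to a sum over $n\geq 1$ of $\sigma_{-\alpha,-\beta}(n,\pm\overline c/d,\chi)$ multiplied by an inner contour integral of the shape
\[
\frac{1}{2\pi i}\int H(1-s)\,\theta_\pm(1-s)\,(xn)^{-s}\,ds,
\]
where $\theta_+ = \theta$ is the constant and $\theta_-(\,\cdot\,)$ is the $s$-dependent factor.

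Finally, I would identify these inner integrals as Bessel transforms. In the $+$-case, substituting the definition of $H$ and applying the classical Mellin--Barnes identity
\[
\frac{1}{2\pi i}\int_{(\sigma_0)}\Gamma\bigl(u-\tfrac{\nu}{2}\bigr)\Gamma\bigl(u+\tfrac{\nu}{2}\bigr)\,y^{-u}\,du = 2K_\nu(2\sqrt{y})
\]
with $\nu=\beta-\alpha$ and $y=4\pi^2\varrho xn/(qd^2)$ reproduces the stated formula for $g^+(n)$ after routine simplification of the arithmetic prefactor. In the $-$-case, the extra factor $\theta(1-s)$ decomposes into two exponentials which, when paired with the Gamma factors of $H(1-s)$ via Euler's reflection formula $\Gamma(z)\Gamma(1-z)=\pi/\sin\pi z$, convert the Mellin--Barnes kernel into those of $J_\nu$ and $Y_\nu$; assembling the correct linear combinations in the parity cases $\chi(-1)=\pm 1$ yields precisely $B_{\alpha-\beta}$. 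The main obstacle is this last identification: careful bookkeeping of signs, phases, and the arithmetic prefactors in $\pi,q,d,\varrho$ through the substitution $s\mapsto 1-s$ and the reflection-formula manipulation, and in particular verifying that the two branches of the definition of $B_\nu$ correspond correctly to the two parities of $\chi$.
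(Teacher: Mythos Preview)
Your proposal is correct and follows essentially the same route as the paper: Mellin inversion on the line of absolute convergence, a contour shift picking up the residue of $E_{\alpha,\beta}(s,c/d,\chi)$ at $s=z$, application of the functional equation of Lemma~\ref{functional E}, the substitution $s\mapsto 1-s$, termwise expansion of $\tilde E_{-\alpha,-\beta}$, and identification of the resulting contour integrals as Bessel functions. The only cosmetic difference is that for the $-$-kernel the paper quotes the Mellin--Barnes representations of $J_\nu$ and $Y_\nu$ directly (with their built-in $\cos$ and $\sin$ factors) rather than deriving them via the reflection formula as you outline; these are equivalent.
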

\begin{proof}For simplicity we assume the $g$ is in Schwartz space and that $g(0)=0$. The general case then follows on taking smooth approximations. We let $G$ denote the Mellin transform of $g$, that is, 
\begin{equation}\label{mel g}G(s)=\int_0^\infty x^{s-1}g(x)dx\end{equation}
and note that it is holomorphic in the region $\Re s>-2$ except for a simple pole at $s=0$ with residue $g(0)$(=0). Applying Mellin inversion and then shifting contours we have
\begin{equation}\begin{split}\sum_{n\geq 1}f_{\alpha,\beta}(n,\chi)e_d(cn)g(n)=&\frac{1}{2\pi i}\int_{(2)}E_{\alpha,\beta}(s,c/d,\chi) G(s)ds
\\=&\left(\underset{\overset{s=z}{}}{\mathrm{Res}}\,E_{\alpha,\beta}(s,c/d,\chi)G(s)\right)+\frac{1}{2\pi i}\int_{(-\frac{1}{4})}E(s)G(s)ds.
\end{split}\end{equation}
Note that interchange of summation and integration in the first line is justified by the absolute convergence of $E$ and that the contour shift is also valid since $G(s)$ decays rapidly whilst $E(s)$ increases moderately (\cite{furuya}, formula (3.4)) as $|\Im s| \to \infty$. Writing $\tilde{E}^\pm(s)=\tilde{E}_{-\alpha,-\beta}(s,\pm\overline{c}/d,\chi)$ for short and applying the functional equation (\ref{functional E 1}) gives 
\begin{align}\frac{1}{2\pi i}\int_{(-\frac{1}{4})}E(s)G(s)ds
=&\frac{1}{2\pi i}\int_{(-\frac{1}{4})}H(s)\bigg[\theta\cdot\tilde{E}^+(1-s)-\theta(s)\tilde{E}^-(1-s)\bigg]G(s)ds\nonumber
\\=&\frac{1}{2\pi i}\int_{(\frac{5}{4})}H(1-s)\bigg[\theta\cdot\tilde{E}^+(s)-\theta(1-s)\tilde{E}^-(s)\bigg]G(1-s)ds\nonumber
\\=&\frac{(2\pi)^{\alpha+\beta}}{d^{1+\alpha+\beta}}\left(\frac{\varrho}{q}\right)^{1+\beta}\sum_{+-}\sum_{n\geq 1}\sigma_{-\alpha,-\beta}(n,\pm\frac{\overline{c}}{d},\chi)G^{\pm}\left(\frac{4\pi^2 \varrho n}{d^2q}\right)
\end{align}
where
\begin{equation}G^+(y)=\theta\frac{1}{2\pi i}\int_{(\frac{5}{4})}\Gamma(s-\alpha)\Gamma(s-\beta)G(1-s)y^{-s}ds
\end{equation}
and
\begin{equation}G^-(y)=-\frac{1}{2\pi i}\int_{(\frac{5}{4})}\theta(1-s)\Gamma(s-\alpha)\Gamma(s-\beta)G(1-s)y^{-s}ds.
\end{equation}
We note that since the shifts are small $\tilde{E}_{-\alpha,-\beta}(s)$ is absolutely convergent on the line $\Re s=5/4$ and hence the interchange of summation and integration is legal. By (\ref{mel g}) and the fact that $g$ is Schwartz we have
\begin{equation}G^+(y)=\theta\int_0^\infty g(x)\left(\frac{1}{2\pi i}\int_{(\frac{5}{4})}\Gamma(s-\alpha)\Gamma(s-\beta)(xy)^{-s}ds\right)dx
\end{equation}
and similarly for $G^-(y)$. The result now follows on applying the formulae
\begin{align}2K_\nu(z)&=\frac{1}{2\pi i}\int_{(c)}\Gamma(s)\Gamma(s-\nu)\left(\frac{z}{2}\right)^{\nu-2s}ds,
\\-\pi Y_\nu(z)&=\frac{1}{2\pi i}\int_{(c)}\Gamma\left(\frac{s-\nu}{2}\right)\Gamma\left(\frac{s+\nu}{2}\right)\cos\frac{\pi}{2}(s-\nu)\left(\frac{z}{2}\right)^{-2s}ds,
\\\pi J_\nu(z)&=\frac{1}{2\pi i}\int_{(c)}\Gamma\left(\frac{s-\nu}{2}\right)\Gamma\left(\frac{s+\nu}{2}\right)\sin\frac{\pi}{2}(s-\nu)\left(\frac{z}{2}\right)^{-2s}ds,
\end{align}
along with the obvious substitutions for $s$. 
\end{proof}

\subsection{Applying Voronoi Summation}
Recall that for integers $m,n$ we define $m_{(n)}=m/(m,n)$. In order to apply Proposition \ref{voronoi} to (\ref{D}) we must first write the fractions $ch/d$, $ck/d$ in reduced form i.e. as $ch_{(d)}/d_{(h)}$, $ck_{(d)}/d_{(k)}$. We also note that by Proposition \ref{voronoi} the form of the innermost sum of (\ref{D}) will be dependent on $(d_{(h)},q)$ and $(d_{(k)},q)$. Accordingly, we partition the positive integers into 9 sets $P_{ij}$, $1\leq i,j\leq 3$ subject to the following conditions: 
\begin{equation}\label{P}d\in\begin{cases}P_{1j}&\text{if $(d_{(h)},q)=1$,}\\P_{2j}&\text{if $(d_{(h)},q)=q$,}\\P_{3j}&\text{if $1<(d_{(h)},q)<q$}
\end{cases},\,\,d\in\begin{cases}P_{i1}&\text{if $(d_{(k)},q)=1$,}\\P_{i2}&\text{if $(d_{(k)},q)=q$,}\\P_{i3}&\text{if $1<(d_{(k)},q)<q$}
\end{cases}.
\end{equation}
We wil later give a description of these sets but for the meanwhile we only make use of the observation that if $i=2$ or $j=2$ then the elements of the set $P_{ij}$ are divisivble by $q$. This can be seen by writing $h$, $k$ and $d\in P_{ij}$ as their respective $q$-parts times non $q$-parts and then solving the given conditions. Let
\begin{equation}R_i=\begin{cases}\underset{\overset{s=1-\alpha}{}}{\mathrm{Res}}\,E_{\alpha,\beta}(s,ch_{(d)}/d_{(h)},\chi) &\text{if $i=1$,}\\
\underset{\overset{s=1-\beta}{}}{\mathrm{Res}}\,E_{\alpha,\beta}(s,ch_{(d)}/d_{(h)},\chi) &\text{if $i=2$,}\\
0 &\text{if $i=3$}\\\end{cases}
\end{equation}
and
\begin{equation}R_j^\prime=\begin{cases}\underset{\overset{s=1-\gamma}{}}{\mathrm{Res}}\,E_{\gamma,\delta}(s,ck_{(d)}/d_{(k)},\overline{\chi}) &\text{if $j=1$,}\\
\underset{\overset{s=1-\delta}{}}{\mathrm{Res}}\,E_{\gamma,\delta}(s,ck_{(d)}/d_{(k)},\overline{\chi}) &\text{if $j=2$,}\\
0 &\text{if $j=3$.}\\\end{cases}
\end{equation}
Also, as is evident from Proposition \ref{voronoi} and Lemma \ref{res E} we must associate the shifts with $i,j$ so let 
\begin{equation}\label{ab}a_i=\begin{cases}\alpha &\text{if $i=1$,}\\\beta &\text{if $i=2$,}\\0 &\text{if $i=3$,}\end{cases}
\,\,b_j=\begin{cases}\gamma &\text{if $j=1$,}\\\delta &\text{if $j=2$,}\\0 &\text{if $j=3$.}\end{cases}
\end{equation}
Applying Proposition \ref{voronoi} to (\ref{D}) we get 
\begin{multline}\label{D 3}D_F(h,k,r)\\=\sum_{i,j=1}^3\sum_{\substack{d<2\Omega\\ d\in P_{ij}}}\sum_{\substack{c=1\\(c,d)=1}}^d e_d(-cr)\Bigg\{R_iR_j^\prime I_{ij}+
+\frac{1}{d_{(h)}}AR_j^\prime\sum_{m=1}^\infty\frac{\sigma_{-\alpha,-\beta}(m,-\overline{ch_{(d)}}/d_{(h)},{\chi})}{m^{\frac{\alpha+\beta}{2}}}I_h(m)
\\+\frac{1}{d_{(k)}}BR_i\sum_{n=1}^\infty\frac{\sigma_{-\gamma,-\delta}(n,\overline{ck_{(d)}}/d_{(k)},\overline{\chi})}{n^{\frac{\gamma+\delta}{2}}}I_k(n)
\\+\frac{1}{d_{(h)}d_{(k)}}AB\sum_{m,n=1}^\infty\frac{\sigma_{-\alpha,-\beta}(m,-\overline{ch_{(d)}}/d_{(h)},{\chi})\sigma_{-\gamma,-\delta}(n,\overline{ck_{(d)}}/d_{(k)},\overline{\chi})}{m^{\frac{\alpha+\beta}{2}}n^{\frac{\gamma+\delta}{2}}}I_{hk}(m,n)
+\cdots\Bigg\}
\end{multline}
where
\[A=\left(\frac{(d_{(h)},q)}{q}\right)^{1-\frac{\alpha-\beta}{2}},\,\,B=\left(\frac{(d_{(k)},q)}{q}\right)^{1-\frac{\gamma-\delta}{2}}\]
and
\begin{align}I_{ij}=&\iint\limits_0^{\,\,\,\,\,\,\,\,\infty} x^{-a_i}y^{-b_j}F^\sharp(x,y)dxdy,
\\I_h(m)=&-2\pi\iint\limits_0^{\,\,\,\,\,\,\,\,\infty}  x^{-\frac{\alpha+\beta}{2}}y^{-b_j}B_{\alpha-\beta}\left(\frac{4\pi\sqrt{(d_{(h)},q)mx/q}}{d_{(h)}}\right)F^\sharp(x,y)dxdy,
\\I_k(n)=&-2\pi\iint\limits_0^{\,\,\,\,\,\,\,\,\infty} x^{-a_i}y^{-\frac{\gamma+\delta}{2}}B_{\gamma-\delta}\left(\frac{4\pi\sqrt{(d_{(k)},q)ny/q}}{d_{(k)}}\right)F^\sharp(x,y)dxdy,
\\I_{hk}(m,n)=&4\pi^2\iint\limits_0^{\,\,\,\,\,\,\,\,\infty}  x^{-\frac{\alpha+\beta}{2}}y^{-\frac{\gamma+\delta}{2}}B_{\alpha-\beta}\left(\frac{4\pi\sqrt{(d_{(h)},q)mx/q}}{d_{(h)}}\right)
\\&\times B_{\gamma-\delta}\left(\frac{4\pi\sqrt{(d_{(k)},q)ny/q}}{d_{(k)}}\right)F^\sharp(x,y)dxdy\nonumber.
\end{align}
The addtional terms of (\ref{D 3}) are those involving the $K_\nu$-Bessel function and can be estimated using the same method we use for the ones displayed.

\subsection{Evaluating the Main Terms}We have
\begin{equation}\begin{split}I_{ij}=&\iint\limits_0^{\,\,\,\,\,\,\,\,\infty} x^{-a_i}y^{-b_j}F^\sharp(x,y)dxdy
         \\=&\frac{1}{h^{1-a_i}k^{1-b_j}}\iint\limits_0^{\,\,\,\,\,\,\,\,\infty}  x^{-a_i}y^{-b_j}F_\phi(x,y)\Delta_d(x-y-r)dxdy 
         \\=&\frac{1}{h^{1-a_i}k^{1-b_j}}\int_0^\infty\int_{r-x}^\infty x^{-a_i}(x-r+u)^{-b_j}F_\phi(x,x-r+u)\Delta_d(u)dudx. 
\end{split}\end{equation}
By Lemma 1 of \cite{dfi} we see that the inner integral is equal to \[x^{-a_i}(x-r)^{-b_j}F_\phi(x,x-r)+O((d/\Omega)^A),\,\,\,\,A\geq 1,\]
if $r-x\leq 0$ and it is $\ll(d/\Omega)^A$, $A\geq 1$, if $r-x>0$. Assuming $d\leq \Omega^{1-\epsilon}$ on taking $A$ large we get
\begin{equation}I_{ij}=\frac{1}{h^{1-a_i}k^{1-b_j}}\int_{\max(0,r)}^\infty x^{-a_i}(x-r)^{-b_j}F(x,x-r)dx+O(\Omega^{-B})
\end{equation}
where $B$ is an arbitrary positive contant. By formula (\ref{doubleint}) below we have $hkI_{ij}\ll XY(X+Y)^{-1}\log\Omega$ valid for all $d$. Also, by formula (\ref{c_d form}) below we have the bound \[\sum_{\substack{c=1\\(c,d)=1}}^d\chi(c) e_d(-cr)\ll q^{1/2}(r,d).\]
On applying these in the sum over $d\geq \Omega^{1-\epsilon}$ we get
\begin{multline}\label{main term}\sum_{i,j=1}^3\sum_{\substack{d<2\Omega\\ d\in P_{ij}}}\sum_{\substack{c=1\\(c,d)=1}}^d e_d(-cr)R_iR_j^\prime I_{ij}
\\=\sum_{i,j=1}^2\frac{1}{h^{1-a_i}k^{1-b_j}}S_{ij}(h,k,r)\int_{\max(0,r)}^\infty x^{-a_i}(x-r)^{-b_j}F(x,x-r)dx
\\+O((hk)^{-1}q^{1/2}XY(X+Y)^{-1}\Omega^{-1+\epsilon})
\end{multline}
where \begin{equation}S_{ij}(h,k,r)=\sum_{\substack{d\in P_{ij}}}\sum_{\substack{c=1\\(c,d)=1}}^d e_d(-cr)R_iR_j^\prime.\end{equation}

\subsection{Estimating the Error Terms}
Throughout the following analysis we essentially ignore the shift parameters but since they are small this is of no great improtance. 
We first estimate the sums over $c$. Pushing the sum over $c$ through (\ref{D 3}) we encounter sums of the form 
\begin{equation}\label{s1}S_1=\sum_{\substack{c=1\\(c,d)=1}}^d e_d(-cr)R_j^\prime \sigma_{-\alpha,-\beta}(m,-\overline{ch_{(d)}}/d_{(h)},\chi),
\end{equation}
\begin{equation}\label{s2}S_2=\sum_{\substack{c=1\\(c,d)=1}}^d e_d(-cr) \sigma_{-\alpha,-\beta}(m,-\overline{ch_{(d)}}/d_{(h)},\chi)\sigma_{-\gamma,-\beta}(n,\overline{ck_{(d)}}/d_{(k)},\overline{\chi})
\end{equation}
Clearly, we also encounter a slight variant of (\ref{s1}) but this can be estimated using the same method as for the sum displayed.  In estimating these we shall make use of Weil's bound for Kloosterman sums
\begin{equation}\label{kloost}S(r,t,d)\ll (r,d)^{1/2}d^{1/2}\tau(d)
\end{equation}
where $\tau$ is the usual divisor function. We will also need an estimate for sums of the form 
\begin{equation}S_\chi(r,t,d)=\sum_{\substack{c=1\\(c,d)=1}}^d\chi(c) e_d(cr+\overline{c}t).
\end{equation}
These are similar to Salie sums (\cite{iwaniec},\cite{sarnak}) the difference being that $q|d$ whenever they appear. These are dealt with in \cite{muller} (see formula (16)) where M\"uller obtains
\begin{equation}\label{kloost chi}S_\chi(r,t,d)=S_{\overline{\chi}}(t,r,d)\ll q^{1/2}(r,d)^{1/2}d^{1/2}\tau(d).
\end{equation}
By inspecting the cases $1\leq i,j\leq 3$ and using (\ref{kloost}), (\ref{kloost chi}) along with the obvious variants of (\ref{res E1}), (\ref{sig 1}), (\ref{sig 3}) we get
\begin{equation}\label{s1 bound}S_1\ll \frac{1}{d_{(k)}}q^{3/2}|L(1,\chi)|(r,d)^{1/2}d^{1/2}\tau(d)\tau(m),
\end{equation}
\begin{equation}\label{s2 bound}S_2\ll q(r,d)^{1/2}d^{1/2}\tau(d)\tau(m)\tau(n).
\end{equation}

We now estimate the integrals $I_h,I_k,I_{hk}$. From (\ref{Delta der}),(\ref{F der}) we have the bound
\begin{equation}F^{\sharp(i,j)}\ll \frac{a^ib^j}{(d\Omega)^{i+j+1}}. 
\end{equation}
Using this along with the recurrence relations $(z^\nu Y_\nu(z))^\prime=z^\nu Y_{\nu-1}(z)$, $(z^\nu J_\nu(z))^\prime=z^\nu J_{\nu-1}(z)$ an integration by parts argument shows that these integrals are small unless 
\begin{equation}\label{range}m<\frac{hqX}{(d_{(h)},q)}\Omega^{-2+\epsilon},\,n<\frac{kqY}{(d_{(k)},q)}\Omega^{-2+\epsilon}.
\end{equation}
For $m,n$ in this range we estimate the integrals using the support conditions on $F$ and the bounds $Y_{\alpha-\beta}(z),J_{\alpha-\beta}(z)\ll z^{-1/2}$ to give 
\begin{equation}I_h(m)\ll \left(\frac{hqd^2}{n(d_{(h)},q)X}\right)^{1/4}\iint
\end{equation}
\begin{equation}I_k(n)\ll \left(\frac{kqd^2}{n(d_{(k)},q)Y}\right)^{1/4}\iint
\end{equation}
\begin{equation}I_{hk}(m,n)\ll \left(\frac{hkq^2d^4}{mn(d_{(h)},q)(d_{(k)},q)XY}\right)^{1/4}\iint
\end{equation}
where 
\begin{equation}\label{doubleint}\iint=\int_0^\infty\int_0^\infty |F_\phi(hx,ky)\Delta_d(hx-ky-r)|dxdy\ll (hk)^{-1}\frac{XY}{X+Y}\log\Omega\end{equation} 
(\cite{dfi}, formula (30)). Therefore, summing over $m$, $n$ in the range (\ref{range}) we have
\begin{equation}\sum_m \tau(m)|I_h(m)|\ll \frac{d^{1/2}q}{k(d_{(h)},q)}\frac{X^{3/2}Y}{X+Y}\Omega^{-3/2+\epsilon},
\end{equation}
\begin{equation}\sum_n \tau(n)|I_k(n)|\ll \frac{d^{1/2}q}{h(d_{(k)},q)}\frac{XY^{3/2}}{X+Y}\Omega^{-3/2+\epsilon},
\end{equation}
\begin{equation}\sum_{m,n} \tau(m)\tau(n)|I_{hk}(m,n)|\ll \frac{dq^2}{(d_{(h)},q)(d_{(k)},q)}\frac{(XY)^{3/2}}{X+Y}\Omega^{-3+\epsilon}.
\end{equation}
Introducing these bounds into (\ref{D 3}) along with (\ref{s1 bound}), (\ref{s2 bound}) and summing over $d$ we get an error term of
\begin{equation}q^{3/2}|L(1,\chi)|\frac{XY}{X+Y}\left(\frac{X^{1/2}}{k}+\frac{Y^{1/2}}{h}\right)\Omega^{-3/2+\epsilon}+q\frac{(XY)^{3/2}}{X+Y}\Omega^{-5/2+\epsilon}.
\end{equation}
Here we have used $\sum_{d\leq x}(hk,d)\ll x^{1+\epsilon}$. We now take $U=\Omega^2=P^{-1}(X+Y)^{-1}XY$ and the above becomes
\begin{multline}\label{error term}q^{3/2}|L(1,\chi)|P^{3/4}\left(\frac{XY}{X+Y}\right)^{1/4+\epsilon}\left(\frac{X^{1/2}}{k}+\frac{Y^{1/2}}{h}\right)
\\+qP^{5/4}(XY)^{1/4+\epsilon}(X+Y)^{1/4}.
\end{multline}

\subsection{Combining Terms}

We now combine formulas (\ref{main term}) and (\ref{error term}) with (\ref{D 3}) whilst noting that $X\asymp Y\asymp \sqrt{hkMN}$. This  gives

\begin{prop}Let $P_{ij}$ and $a_i$, $b_j$ be defined respectively by (\ref{P}) and (\ref{ab}). Also, let
\begin{equation}\Psi_{ij}(h,k,r)=\frac{1}{h^{1-a_i}k^{1-b_j}}S_{ij}(h,k,r)\int_{\max(0,r)}^\infty x^{-a_i}(x-r)^{-b_j}F(x,x-r)dx.
\end{equation}
Then
\begin{eqnarray}\label{D 4}D_F(h,k,r)&=&\sum_{i,j=1}^2\Psi_{ij}(h,k,r)+\,E^\flat(T)
\end{eqnarray}
where
\begin{equation}\begin{split}E^\flat(T)\ll& T^\epsilon (hkMN)^{3/8+\epsilon}\Big(q^{3/2+\epsilon}|L(1,\chi)|(T/T_0)^{3/4}+q^{1+\epsilon}(T/T_0)^{5/4}\Big).
\end{split}\end{equation}
The terms $S_{ij}(h,k,r)$ are given by
\begin{align}\label{S_11}S_{11}(h,k,r)&=L_{\alpha,\beta}(\chi)L_{\gamma,\delta}(\overline{\chi})\sum_{d\in P_{11}}\frac{c_d(r)\chi(d_{(h)})\overline{\chi}(d_{(k)})}{d_{(h)}^{1-\alpha+\beta}d_{(k)}^{1-\gamma+\delta}},
\\\label{S_12}S_{12}(h,k,r)&=\frac{\chi(-1) G(\overline{\chi})L_{\alpha,\beta}(\chi)L_{-\gamma,-\delta}(\chi)}{q^{\delta-\gamma}}\sum_{d\in P_{12}}\frac{c_d(r,\chi)\chi(d_{(h)}){\chi}(k_{(d)})}{d_{(h)}^{1-\alpha+\beta}d_{(k)}^{1+\gamma-\delta}},
\\\label{S_21}S_{21}(h,k,r)&=\frac{G({\chi})L_{-\alpha,-\beta}(\overline{\chi})L_{\gamma,\delta}(\overline{\chi})}{q^{\beta-\alpha}}\sum_{d\in P_{21}}\frac{c_d(r,\overline{\chi})\overline{\chi}(h_{(d)})\overline{\chi}(d_{(k)})}{d_{(h)}^{1+\alpha-\beta}d_{(k)}^{1-\gamma+\delta}},
\\\label{S_22}S_{22}(h,k,r)&=\frac{L_{-\alpha,-\beta}(\overline{\chi})L_{-\gamma,-\delta}({\chi})}{q^{-1+\beta-\alpha+\delta-\gamma}}\sum_{d\in P_{22}}\frac{c_d(r,|\chi|^2)\overline{\chi}(h_{(d)}){\chi}(k_{(d)})}{d_{(h)}^{1+\alpha-\beta}d_{(k)}^{1+\gamma-\delta}},
\end{align}
where $L_{x,y}(\chi)=L(1-x+y,\chi)$, 
\begin{equation}\label{ram sum}c_d(r,\chi)=\sum_{\substack{c=1\\(c,d)=1}}^d \chi(c)e_d(-cr)
\end{equation}
and $c_d(r)$ is the usual Ramanujan sum.
\end{prop}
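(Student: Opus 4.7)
The proposition is a bookkeeping step: everything needed has already been assembled in the preceding subsections, and the task is to package the output of (\ref{D 3}), (\ref{main term}) and (\ref{error term}) into the stated form. The plan is to begin from (\ref{D 3}), to keep only the $R_i R_j' I_{ij}$ contributions (necessarily $1\le i,j\le 2$, since $R_3 = R_3' = 0$ by definition), to identify these with $\sum_{i,j=1}^{2}\Psi_{ij}(h,k,r)$ via (\ref{main term}), and to fold all remaining pieces into $E^\flat(T)$.

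For the explicit shape of each $S_{ij}$, I would substitute the residue formulas of Lemma \ref{res E} and pull the sum over $c$ inside, producing Ramanujan-type sums of the form (\ref{ram sum}). The one genuine subtlety is a sign: the phase $e_d(chm)e_d(-ckn)$ in (\ref{D}) evaluates $R_i$ at $ch_{(d)}/d_{(h)}$ but $R_j'$ at $-ck_{(d)}/d_{(k)}$. Hence when $j = 2$ the residue contributes $\chi(-c k_{(d)}) = \chi(-1)\chi(c)\chi(k_{(d)})$, which is the source of the $\chi(-1)$ in $S_{12}$. In $S_{22}$ this sign combines with $G(\chi)G(\overline{\chi}) = (-1)^{\mathfrak a}q = \chi(-1)q$, so the two copies of $\chi(-1)$ cancel and leave the extra factor of $q$ reflected in the denominator $q^{-1+\beta-\alpha+\delta-\gamma}$. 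Since $q\mid d$ whenever $d\in P_{22}$, the condition $(c,d)=1$ forces $(c,q)=1$, hence $|\chi(c)|^2 = 1$ and $c_d(r,|\chi|^2)$ collapses to the usual Ramanujan sum without further adjustment.

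The error term is obtained by substituting $X\asymp Y\asymp \sqrt{hkMN}$ (valid on the near-diagonal range given in Proposition \ref{I_M,N prop}) and $P = (hkq)^\epsilon T^{1+\epsilon}/T_0$ into (\ref{error term}). Both combinatorial factors $(XY/(X+Y))^{1/4+\epsilon}(X^{1/2}/k + Y^{1/2}/h)$ and $(XY)^{1/4+\epsilon}(X+Y)^{1/4}$ simplify to $O((hkMN)^{3/8+\epsilon})$, while $P^{3/4}$ and $P^{5/4}$ become $T^\epsilon(T/T_0)^{3/4}$ and $T^\epsilon(T/T_0)^{5/4}$, giving precisely the two summands of $E^\flat(T)$. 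The tail in (\ref{main term}) coming from $d\ge\Omega^{1-\epsilon}$, together with the $K_\nu$-Bessel analogues of $I_h$, $I_k$, $I_{hk}$ from (\ref{D 3}) (which are handled by the same integration-by-parts argument as the displayed $B_\nu$ terms), are strictly smaller than this and can be absorbed.

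The main obstacle is the combinatorial bookkeeping rather than any single estimate: one must carry the signs, the $\chi(-1)$ factors, the Gauss sum identity, and the three distinct shapes (\ref{sig 1})--(\ref{sig 3}) of $\sigma_{\alpha,\beta}(n,c/d,\chi)$ through all nine pairs $(i,j)$ and check that they collapse to precisely (\ref{S_11})--(\ref{S_22}) rather than to close variants with twisted signs, swapped $\chi$ and $\overline{\chi}$, or mismatched placements of the Gauss sum.
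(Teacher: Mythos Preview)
Your proposal is correct and follows the paper's own approach: the paper simply states ``We now combine formulas (\ref{main term}) and (\ref{error term}) with (\ref{D 3}) whilst noting that $X\asymp Y\asymp \sqrt{hkMN}$'' and then records the proposition. You have expanded this one-line proof with the detailed bookkeeping the paper leaves implicit, in particular the tracking of the $\chi(-1)$ through the $j=2$ residue (coming from the minus sign in $e_d(-ckn)$) and its cancellation in $S_{22}$ via $G(\chi)G(\overline{\chi})=\chi(-1)q$, and the substitution $X\asymp Y\asymp\sqrt{hkMN}$, $P=(hkq)^\epsilon T^{1+\epsilon}/T_0$ into (\ref{error term}) to produce the two summands of $E^\flat(T)$.
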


We can now return to Proposition \ref{I_M,N prop} and apply our formula for $D_F(h,k,r)$. Summing  over $r$ gives
\begin{equation}\begin{split}I_{M,N}=&\frac{T}{\sqrt{MN}}\sum_{0\neq r\ll\frac{T^\epsilon\sqrt{hkMN}}{T_0}}\bigg\{\sum_{i,j=1}^2\Psi_{ij}(h,k,r)
\\&+O\Big(T^\epsilon (hkMN)^{3/8+\epsilon}\big(q^{3/2}|L(1,\chi)|(T/T_0)^{3/4}+q(T/T_0)^{5/4}\big)\Big)\bigg\}
\\=&\frac{T}{\sqrt{MN}}\sum_{0\neq r\ll\frac{T^\epsilon\sqrt{hkMN}}{T_0}}\sum_{i,j=1}^2\Psi_{ij}(h,k,r)
\\&+O\Big(T^\epsilon(MN)^{3/8+\epsilon}(hk)^{7/8+\epsilon}\big(q^{3/2}|L(1,\chi)|(T/T_0)^{7/4}+q(T/T_0)^{9/4}\big)\Big)
\end{split}\end{equation}

By the usual integration by parts argument (formula (\ref{int by parts})) we see that $\Psi_{ij}(h,k,r)$ is small for large $r$ and hence we may freely extend the sum over $r\neq 0$. Summing over $M,N$ we obtain

\begin{prop}\label{I_O prop}
\begin{equation}I_O^{(1)}(h,k)=\sum_{M,N}\sum_{r\neq 0}\sum_{i,j=1}^2\frac{T}{\sqrt{MN}}\Psi_{ij}(h,k,r)+E(T)
\end{equation}
where
\begin{equation}E(T)\ll T^{3/4+\epsilon}(hk)^{7/8+\epsilon}\big(q^{3/2+\epsilon}|L(1,\chi)|(T/T_0)^{7/4}+q^{1+\epsilon}(T/T_0)^{9/4}\big).
\end{equation}
\end{prop}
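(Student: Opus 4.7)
The plan is to assemble pieces that are already in place. Proposition \ref{I_M,N prop} writes $I_{M,N}(h,k)$ as $\frac{T}{\sqrt{MN}}$ times a sum over $r$ in the short range $0<|r|\ll\frac{\sqrt{hkMN}}{T_0}T^\epsilon$, and one recognizes the inner $(m,n)$-sum (after a harmless relabeling) as $D_F(h,k,r)$. The natural move is then to substitute the formula $D_F(h,k,r)=\sum_{i,j=1}^2\Psi_{ij}(h,k,r)+E^\flat(T)$ supplied by the preceding proposition, which immediately separates the would-be main contribution $\sum_{i,j}\Psi_{ij}$ from an error.

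For the error bookkeeping I would bound the contribution of $E^\flat(T)$ by the length of the $r$-range, namely $\sqrt{hkMN}/T_0\cdot T^\epsilon$, multiplied by the prefactor $T/\sqrt{MN}$ and by $E^\flat(T)$ itself. Combining $T\cdot T_0^{-1}$ with the factors $(T/T_0)^{3/4}$ and $(T/T_0)^{5/4}$ in $E^\flat(T)$ consolidates into the $(T/T_0)^{7/4}$ and $(T/T_0)^{9/4}$ appearing in the claimed $E(T)$. To finish, I would invoke the truncation $MN\ll T^{2+\epsilon}$ stated at the start of section 4 so that $(MN)^{3/8+\epsilon}\ll T^{3/4+\epsilon}$, and note that the dyadic sum over $M$ and $N$ only costs a $\log^2 T\ll T^\epsilon$ factor. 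This produces exactly the stated bound on $E(T)$.

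For the main term, the remaining task is to replace the truncated $r$-sum by a sum over all $r\neq 0$. Since $\Psi_{ij}(h,k,r)$ inherits the oscillatory factor $(1+r/y)^{-it}$ (with $y\asymp kN$) through the function $F$, the same repeated integration-by-parts argument that gave (\ref{int by parts}) shows that $\Psi_{ij}(h,k,r)$ is negligible as soon as $|r|\gg\sqrt{hkMN}/T_0\cdot T^\epsilon$, so the tail is absorbed into $E(T)$. Summing dyadically over $M$ and $N$ and recalling $\sum_{M,N}I_{M,N}(h,k)=I_O^{(1)}(h,k)$ then yields the proposition.

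The argument is essentially bookkeeping rather than a conceptual leap; the one step needing care is verifying that the various exponents of $T$, $T_0$, $hk$, $MN$, and $q$ aggregate to precisely the $E(T)$ appearing in Theorem \ref{main thm}. The reason this is the delicate point is that multiple contributions (the size of the $r$-range, the prefactor $T/\sqrt{MN}$, the two terms inside $E^\flat$, and the dyadic cap on $MN$) conspire, and the $7/8$ exponent on $hk$ and the $9/4$ exponent on $T/T_0$ in particular must be tracked carefully to ensure that the error is compatible with the hypothesis $hk\leq T^{2/11-\epsilon}$ under which Theorem \ref{main thm} is stated.
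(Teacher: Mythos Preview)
Your proposal is correct and follows essentially the same argument as the paper: substitute the formula $D_F(h,k,r)=\sum_{i,j}\Psi_{ij}(h,k,r)+E^\flat(T)$ into Proposition~\ref{I_M,N prop}, bound the error by the length of the $r$-range times $T/\sqrt{MN}$ times $E^\flat(T)$, extend the $r$-sum to all $r\neq 0$ via the integration-by-parts bound~(\ref{int by parts}), and then sum dyadically over $M,N$ using $MN\ll T^{2+\epsilon}$. The exponent bookkeeping you outline matches the paper's exactly.
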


We now wish to manipulate the integrals in $\Psi_{ij}$. These are very similar to the integrals in $I^{(1)}_{\alpha,\beta,\gamma,\delta}$ of section 6 in \cite{hughes and young}, the only important difference being that we have the presence of $c_d(r,\chi)$ which is not necessarily invariant under the transformation $r\mapsto -r$. 

The approach of Hughes and Young first involves splitting the sum over $r$ into those parts for which $r>0$ and $r<0$. After some substitutions in the resultant integrals they obtain an integral representation of the Beta function with differing signs in the argument, dependent on the choice of $r$. Expressing the Beta function as a ratio of Gamma functions elucidates a symmetry under which the sign difference grants a more concise expression upon a recombinination of terms. 

First, Let 
\begin{equation}I^{(1)}_{ij,\boldsymbol{\alpha}}=\sum_{M,N}\sum_{r\neq 0}\frac{T}{\sqrt{MN}}\Psi_{ij}(h,k,r)
\end{equation}
so that $$I^{(1)}_O(h,k)=\sum_{i,j=1}^2I^{(1)}_{ij,\boldsymbol{\alpha}}+E(T).$$ 
When $i=j=1$ we can follow the method of Hughes and Young exactly. We can also do this in the case  $i=j=2$ since the elements $d\in P_{22}$ are divisible by $q$ and hence $c_d(r,|\chi|^2)=c_d(r)$. Their method gives   
\begin{align}\label{Psi_11}(I^{(1)}_{ij,\boldsymbol{\alpha}})_{i=j}=&\frac{1}{h^{\frac{1}{2}-a_i}k^{\frac{1}{2}-b_j}}\sum_{r=1}^\infty\frac{S_{ij}(h,k,r)}{r^{a_i+b_j}}\frac{1}{2\pi i}\int_{(\varepsilon)}\frac{G(s)}{s}\left(\frac{\pi^2r^2}{hkq}\right)^{-s}\Gamma(a_i+b_j+2s)\nonumber
\\&\times 2\cos\left(\frac{\pi}{2}(a_i+b_j+2s)\right)\int_{-\infty}^\infty {t^{-a_i-b_j-2s}}g(s,t)\nonumber
\\&\times w(t)\left(1+O\left(\frac{1+|s|^2}{t}\right)\right)dtds.
\end{align}
In the cases $i\neq j$ we first note that $q|d$ for $d\in P_{ij}$. For such $d$ we have
\begin{equation}c_d(-r,\chi)=\chi(-1)\sum_{\substack{c=1\\(c,d)=1}}^d \chi(-c)e_d(cr)=\chi(-1)c_d(r,\chi).\end{equation}
By slightly adapting the methods in \cite{hughes and young} we therefore obtain
\begin{align}\label{Psi_12}(I^{(1)}_{ij,\boldsymbol{\alpha}})_{i\neq j}=&\frac{1}{h^{\frac{1}{2}-a_i}k^{\frac{1}{2}-b_j}}\sum_{r=1}^\infty\frac{S_{ij}(h,k,r)}{r^{a_i+b_j}}\frac{1}{2\pi i}\int_{(\varepsilon)}\frac{G(s)}{s}\left(\frac{\pi^2r^2}{hkq}\right)^{-s}\Gamma(a_i+b_j+2s)\nonumber
\\&\times 2i^\mathfrak{a}\cos\left(\frac{\pi}{2}(a_i+b_j+2s+\mathfrak{a})\right)\int_{-\infty}^\infty {t^{-a_i-b_j-2s}}g(s,t)\nonumber
\\&\times w(t)\left(1+O\left(\frac{1+|s|^2}{t}\right)\right)dtds.
\end{align}

We now plan to shift the contours in the line-integrals so that the sums over $r$ converge absolutely allowing us to push them through the integrals. We will then compute the new sums as a product of two zeta functions (or $L$-functions) times a finite Euler product over the primes dividing $h$ and $k$. It turns out that one of these zeta functions can be paired with the Gamma factors in the line integral allowing us to use the functional equation and hence remove the Gamma factors.

\section{Some Arithemtical Sums}\label{section 6} 

We first investigate the nature of the sets $P_{ij}$. Recall that for an integer $a$ we define it's $q$-part by $$a(q)=\prod_{\substack{p|a\\p|q}}p^{a_p}$$ and it's non-$q$-part by $a^*:=a/a(q)$ so that $(a^*,q)=1$.

\begin{lem}\label{P_ij lem}We have
\begin{equation}\label{P_11}P_{11}=\bigg\{d\in\mathbb{Z}_{\geq 1}: (d,q)=1\bigg\}
\end{equation}
and
\begin{equation}\label{P_22}P_{22}=\bigg\{d\in\mathbb{Z}_{\geq 1}: d=qh(q)k(q)l \,\,\,\mathrm{where}\,\,l\geq 1\bigg\}.
\end{equation}
If $q| h$ then 
\begin{equation}P_{12}=\bigg\{d\in\mathbb{Z}_{\geq 1}: d=qml \,\,\,\mathrm{where}\,\,l\geq 1\,,(l,q)=1\,\mathrm{and}\,m\,\mathrm{ranges\,\, over}\,\,m|h(q)/q\bigg\}
\end{equation}
 otherwise $P_{12}=\emptyset$.
Similarly, if $q| k$ then 
\begin{equation}P_{21}=\bigg\{d\in\mathbb{Z}_{\geq 1}: d=qnl \,\,\,\mathrm{where}\,\,l\geq 1\,,(l,q)=1\,\mathrm{and}\,n\,\mathrm{ranges\,\, over}\,\,n|k(q)/q\bigg\}
\end{equation}
 otherwise $P_{21}=\emptyset$.
\end{lem}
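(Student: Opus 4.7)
The plan is to decompose every relevant integer $a$ as $a=a(q)\,a^*$, where $a(q)$ is the $q$-part and $(a^*,q)=1$, and then to translate the conditions defining the $P_{ij}$ into prime-by-prime conditions on the $q$-adic valuations of $d$, $h$, $k$, and $q$. The key preliminary observation is that since $(d(q),h^*)=(d^*,h(q))=1$, one has $(d,h)=(d(q),h(q))\,(d^*,h^*)$, and consequently
\[
d_{(h)}=\frac{d(q)}{(d(q),h(q))}\cdot\frac{d^*}{(d^*,h^*)}.
\]
The first factor is composed only of primes dividing $q$ and the second factor is coprime to $q$, so the $q$-part of $d_{(h)}$ is exactly $d(q)/(d(q),h(q))$; therefore $(d_{(h)},q)$ depends only on the data $d(q)$, $h(q)$, $q$.

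Writing $q=\prod_{p\mid q}p^{q_p}$, I will then record the two basic equivalences
\[
(d_{(h)},q)=1\iff d_p\le h_p\text{ for every }p\mid q\iff d(q)\mid h(q),
\]
\[
(d_{(h)},q)=q\iff d_p\ge h_p+q_p\text{ for every }p\mid q\iff q\,h(q)\mid d(q),
\]
and the analogous statements with $k$ in place of $h$. The second equivalence uses that the condition $q_p+\min(d_p,h_p)\le d_p$ forces $d_p>h_p$ (since $q_p\ge1$) and then reduces to $d_p\ge h_p+q_p$. I will then feed these four equivalences into the definitions of $P_{ij}$ to obtain divisibility conditions on $d(q)$.

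Finally I will exploit the hypothesis $(h,k)=1$, which is equivalent to $(h(q),k(q))=1$, i.e.\ for each prime $p\mid q$ at most one of $h_p,k_p$ is positive. This is what collapses the combined conditions into the shapes stated in the lemma. Specifically: for $P_{11}$ the joint condition $d(q)\mid h(q)$ and $d(q)\mid k(q)$ forces $d(q)\mid (h(q),k(q))=1$, giving (\ref{P_11}). For $P_{22}$ the joint condition $qh(q)\mid d(q)$ and $qk(q)\mid d(q)$ combines, thanks to coprimality, into $qh(q)k(q)\mid d(q)$ with $d^*$ arbitrary, yielding (\ref{P_22}). For $P_{12}$ one needs simultaneously $d(q)\mid h(q)$ and $qk(q)\mid d(q)$; chaining these forces $qk(q)\mid h(q)$, which by $(h(q),k(q))=1$ forces $k(q)=1$ and $q\mid h(q)$, so that either $q\nmid h$ (giving $P_{12}=\emptyset$) or $q\mid h$, in which case $d(q)=qm$ with $m\mid h(q)/q$ and $d^*=l$ is arbitrary coprime to $q$. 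The case $P_{21}$ is identical after swapping $h\leftrightarrow k$. No step is really difficult; the only point requiring care is the prime-by-prime translation of the gcd conditions on $d_{(h)}$, and after that everything is a bookkeeping exercise built on $(h,k)=1$.
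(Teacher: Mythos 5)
Your proof is correct and follows essentially the same path as the paper's: both begin from the factorisation $d_{(h)}=d^*_{(h^*)}\,d(q)_{(h(q))}$, reducing the gcd conditions on $d_{(h)}$ and $d_{(k)}$ to conditions on $d(q)$ alone, and both then use $(h,k)=1$ (hence $(h(q),k(q))=1$) to collapse the simultaneous divisibility constraints. Your prime-by-prime translation $(d_{(h)},q)=1\iff d_p\le h_p$ and $(d_{(h)},q)=q\iff d_p\ge h_p+q_p$ for all $p\mid q$ is just a cleaner, valuation-theoretic restatement of the divisibility manipulations the paper carries out with $d(q)\mid h(q)$ and $qh(q)\mid d(q)$; it makes the chaining for $P_{12}$ and the merging for $P_{22}$ slightly more transparent, but it is the same argument.
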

\begin{proof}Let $d\in P_{ij}$. We first note that 
\begin{equation}d_{(h)}=\frac{d}{(d,k)}=\frac{d^*}{(d^*,h^*)}\frac{d(q)}{(d(q),h(q))}=d^*_{({h^*})}\cdot d(q)_{({h(q)})}
\end{equation}
and therefore the only influence on $(d_{(h)},q)$ is due to $(d(q)_{({h(q)})},q)$. This means we can let $d^*$ range  freely over the positive integers coprime to $q$ in all of the following.

The conditions defining $P_{11}$ are given by $(d_{(h)},q)=1$ and $(d_{(k)},q)=1$. Therefore $(d(q)_{({h(q)})},q)=1$ and $(d(q)_{({k(q)})},q)=1$. This implies $d(q)=(d(q),h(q))=(d(q),k(q))$ which is only possible if $d(q)=1$ since $(h,k)=1$. 

$P_{22}$ is given by the conditions $(d_{(h)},q)=q$ and $(d_{(k)},q)=q$. These imply that $d(q)=lq(d(q),h(q))=mq(d(q),k(q))$ with $l,m\geq 1$ and $(l,q)>1$, $(m,q)>1$. Therefore we may write $l=n(d(q),k(q))$, $m=n(d(q),h(q))$ for some $n$ with $(n,q)>1$. Putting this back into the previous equality gives 
\[d(q)=nq(d(q),h(q)k(q)).\]
This is possible if and only if $d(q)=nqh(q)k(q)$ and it is clear that the given $n$ is arbitrary.

For $P_{12}$ the conditions are $(d_{(h)},q)=1$, $(d_{(k)},q)=q$. The first of these implies that $d$ must satisfy $d(q)=(d(q),h(q))$ whilst the second gives that $d(q)=mq(d(q),k(q))$ with $m\geq 1$ and $(m,q)>1$. Equating these gives
\[d(q)=(d(q),h(q))=mq(d(q),k(q)).\]
This is possible if and only if $q|h$ hence otherwise $P_{12}$ is empty. Now, if $q|h$ then $k(q)=1$ and therefore
\[mq=(mq,h(q)).\]
This constraint implies that $m$ may only range over the divisors of $h(q)/q$. A similar argument follows for $P_{21}$.

 \end{proof}
\noindent We note that since $(h,k)=1$ at most one of the sets $P_{12}$, $P_{21}$ is non-empty. 

We now move the $s$-lines of integration in (\ref{Psi_11}) and (\ref{Psi_12}) to one so that the sums converge absolutely. Pushing them through we encounter new sums of the form
\begin{equation}\label{form for U_ij}\sum_{r=1}^\infty \frac{S_{ij}(h,k,r)}{r^{a_i+b_j+2s}}=:U_{ij}(s).
\end{equation}
We deal with $U_{11}(s)$ first. By (\ref{S_11}) and (\ref{P_11}) this reads as
\begin{equation}U_{11}(s)=L_{\alpha,\beta}(\chi)L_{\gamma,\delta}(\overline{\chi})\sum_{r=1}^\infty\sum_{\substack{d=1\\ (d,q)=1}}^\infty\frac{c_d(r)\chi(d_{(h)})\overline{\chi}(d_{(k)})(h,d)^{1-\alpha+\beta}(k,d)^{1-\gamma+\delta}}{d^{2-\alpha+\beta-\gamma+\delta}r^{\alpha+\gamma+2s}}.
\end{equation}

\begin{prop}\label{U_11}Let $h=\prod_p p^{h_p}$ and $k=\prod_p p^{k_p}$. Then
\begin{equation}\begin{split}\label{U_11 form}{U_{11}(s)}=&{L_{\alpha,\beta}(\chi)L_{\gamma,\delta}(\overline{\chi})}\frac{\zeta(\alpha+\gamma+2s)\zeta(1+\beta+\delta+2s)}{\zeta(2-\alpha+\beta-\gamma+\delta)}
\\&\times Q_{11}(s) C_{11,\boldsymbol{\alpha},h,k}(s)
\end{split}\end{equation}
where 
\begin{equation}\label{Q}Q_{11}(s)=Q_{11,\boldsymbol{\alpha},q}(s)=\prod_{p|q}\left(\frac{1-p^{-1-\beta-\delta-2s}}{1-p^{-2+\alpha-\beta+\gamma-\delta}}\right)\end{equation}
and
\begin{equation}C_{11,\boldsymbol{\alpha},h,k}(s)=C_{11,\alpha,\beta,\gamma,\delta,h}(s,\overline{\chi})C_{11,\gamma,\delta,\alpha,\beta,k}(s,\chi)\end{equation}
where
\begin{equation}C_{11,\alpha,\beta,\gamma,\delta,h}(s,\overline{\chi})=\prod_{\substack{p\nmid q\\p|h}}\left(\frac{C^{(0)}_{11}(s)-p^{-1}C^{(1)}_{11}(s)+p^{-2}C^{(2)}_{11}(s)}{(1-\overline{\chi}(p)p^{-\alpha-\delta-2s})(1-p^{-2+\alpha-\beta+\gamma-\delta})}\right)\end{equation}
with
\begin{align}C^{(0)}_{11}(s)=&1-\overline{\chi}(p)^{h_p+1}p^{-(h_p+1)(\alpha+\delta+2s)}
\\C^{(1)}_{11}(s)=&(\overline{\chi}(p)p^{\gamma-\delta}+p^{-\beta-\delta-2s})(1-\overline{\chi}(p)^{h_p}p^{-h_p(\alpha+\delta+2s)})
\\C^{(2)}_{11}(s)=&\overline{\chi}(p)p^{-\beta+\gamma-2\delta-2s}-\overline{\chi}(p)^{h_p}p^{-h_p(\alpha+\delta+2s)}p^{\alpha-\beta+\gamma-\delta}\end{align}
\end{prop}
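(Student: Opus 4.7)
\smallskip

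The strategy is to first evaluate the sum over $r$ using a standard identity for Ramanujan sums, and then to factor the resulting arithmetic Dirichlet series in $d$ as an Euler product. Concretely, from $c_d(r)=\sum_{e\mid(d,r)}e\,\mu(d/e)$ one obtains
\[
\sum_{r=1}^\infty \frac{c_d(r)}{r^{\alpha+\gamma+2s}}=\zeta(\alpha+\gamma+2s)\,\phi_{\alpha+\gamma+2s}(d),\qquad \phi_z(d):=\sum_{e\mid d}\mu(d/e)\,e^{1-z},
\]
which is absolutely valid once we have shifted to $\Re(s)=1$. Substituting back into the definition of $U_{11}(s)$ pulls out the factor $\zeta(\alpha+\gamma+2s)L_{\alpha,\beta}(\chi)L_{\gamma,\delta}(\overline\chi)$ and reduces the problem to evaluating
\[
\sum_{(d,q)=1}\frac{\chi(d_{(h)})\overline\chi(d_{(k)})(h,d)^{1-\alpha+\beta}(k,d)^{1-\gamma+\delta}\phi_{\alpha+\gamma+2s}(d)}{d^{\,2-\alpha+\beta-\gamma+\delta}}.
\]

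Since $\phi_z$ is multiplicative, $(h,k)=1$, and the restriction $(d,q)=1$ is multiplicative, this sum splits as an Euler product over primes $p\nmid q$, and the local factors come in three flavours. For primes $p\nmid qhk$ all the arithmetic data is trivial, $|\chi(p)|=1$, and a direct geometric summation yields
\[
\sum_{j\geq 0}\frac{\phi_{\alpha+\gamma+2s}(p^j)}{p^{j(2-\alpha+\beta-\gamma+\delta)}}
=\frac{1-p^{-2+\alpha-\beta+\gamma-\delta}}{1-p^{-1-\beta-\delta-2s}},
\]
which globally produces exactly $\zeta(1+\beta+\delta+2s)/\zeta(2-\alpha+\beta-\gamma+\delta)$. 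The missing Euler factors at $p\mid q$ account for the correction $Q_{11}(s)$ of (\ref{Q}).

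For primes $p\mid h$ (hence $p\nmid k$), writing $m=\min(j,h_p)$ we have $(h,p^j)=p^m$ and $(p^j)_{(h)}=p^{j-m}$, so using $|\chi(p)|=1$ the local sum becomes
\[
\sum_{j\geq 0}\overline\chi(p)^{\min(j,h_p)}\,p^{(1-\alpha+\beta)\min(j,h_p)}\,\frac{\phi_{\alpha+\gamma+2s}(p^j)}{p^{j(2-\alpha+\beta-\gamma+\delta)}}.
\]
The plan is to split this at $j=h_p$: the block $0\leq j\leq h_p$ is a geometric series in $\overline\chi(p)p^{-\alpha-\delta-2s}$, while the tail $j>h_p$ is a geometric series in $p^{-1-\beta-\delta-2s}$. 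Placing the two pieces over the common denominator $(1-\overline\chi(p)p^{-\alpha-\delta-2s})(1-p^{-1-\beta-\delta-2s})$ and clearing, the numerator becomes a polynomial in $p^{-1}$, and collecting terms by their $h_p$-dependence yields precisely the coefficients $C^{(0)}_{11},C^{(1)}_{11},C^{(2)}_{11}$ claimed. Finally, multiplying the pure-$p$ denominator $1-p^{-1-\beta-\delta-2s}$ into $\zeta(1+\beta+\delta+2s)$ recovers the factor $\zeta(2-\alpha+\beta-\gamma+\delta)$ in the formula and gives the denominator $(1-\overline\chi(p)p^{-\alpha-\delta-2s})(1-p^{-2+\alpha-\beta+\gamma-\delta})$ stated for $C_{11,\alpha,\beta,\gamma,\delta,h}$. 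The treatment of primes $p\mid k$ is identical under the symmetry $(\alpha,\beta)\leftrightarrow(\gamma,\delta)$, $\chi\leftrightarrow\overline\chi$.

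The routine piece is the Ramanujan-sum reduction and the generic Euler factor; the main obstacle is the algebraic manipulation at primes $p\mid h$. One must be careful with the two different geometric summations (their parameters are $\overline\chi(p)p^{-\alpha-\delta-2s}$ and $p^{-1-\beta-\delta-2s}$), and upon combining over a common denominator there is substantial cancellation that must be tracked to arrive at exactly the form $C^{(0)}_{11}-p^{-1}C^{(1)}_{11}+p^{-2}C^{(2)}_{11}$; in particular, several $a^{h_p+1}b$-type cross terms must cancel so that the numerator depends on $h_p$ only through $\overline\chi(p)^{h_p}p^{-h_p(\alpha+\delta+2s)}$ and $\overline\chi(p)^{h_p+1}p^{-(h_p+1)(\alpha+\delta+2s)}$, as the stated formula demands.
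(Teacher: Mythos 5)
Your proposal is correct and takes essentially the same route as the paper: reduce the $r$-sum via the classical Ramanujan-sum identity to pull out $\zeta(\alpha+\gamma+2s)$, then factor the remaining $d$-sum (restricted to $(d,q)=1$) as an Euler product with generic, $p\mid h$ and $p\mid k$ local factors, doing the $p\mid h$ computation by splitting at $j=h_p$ into a geometric block in $\overline\chi(p)p^{-\alpha-\delta-2s}$ and a geometric tail in $p^{-1-\beta-\delta-2s}$. The only thing you leave implicit is the final numerator algebra — clearing denominators and verifying that the surviving $h_p$-dependent terms collect into $\overline\chi(p)^{h_p}p^{-h_p(\alpha+\delta+2s)}(\cdots)$ and $\overline\chi(p)^{h_p+1}p^{-(h_p+1)(\alpha+\delta+2s)}$ — which the paper carries out explicitly; your statement that the cross terms cancel is true, but it is precisely that verification that constitutes the bulk of the proof and should be written out rather than asserted.
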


\begin{proof}To simplify things we first define
\[F(a,b,c)=\sum_{r=1}^\infty\sum_{\substack{d=1\\(d,q)=1}}^\infty\frac{c_d(r)\chi(d/(h^*,d))\overline{\chi}(d/(k^*,d))(h^*,d)^{a}(k^*,d)^{b}}{d^{a+b}r^{c+1}}\] 
so that 
\[\frac{U_{11}(s)}{L_{\alpha,\beta}(\chi)L_{\gamma,\delta}(\overline{\chi})}=F(1-\alpha+\beta,1-\gamma+\delta,-1+\alpha+\gamma+2s).\]
By formula (1.5.5) of \cite{titch} we have 
\[\sum_{r=1}^\infty \frac{c_d(r)}{r^{c+1}}=\zeta(c+1)\sum_{n|d}n^{-c}\mu(d/n)\]
where $\mu$ is the mobius function. Performing the substitution $n\mapsto d/n$ in the sum over the divisors of $d$ we have 
\[\frac{F(a,b,c)}{\zeta(c+1)}=\sum_{\substack{d=1\\(d,q)=1}}^\infty\frac{\chi(d/(h^*,d))\overline{\chi}(d/(k^*,d))(h^*,d)^{a}(k^*,d)^{b}}{d^{a+b+c}}\sum_{n|d}n^c\mu(n).\]
Let $g_c(d)=\sum_{n|d}n^c\mu(n)$. Since the numerator is multiplicative we have 
\[\frac{F(a,b,c)}{\zeta(c+1)}=\prod_{p\nmid q}\left(\sum_{m\geq 0}\frac{\chi(p^m/(p^{h_p},p^m))\overline{\chi}(p^m/(p^{k_p},p^m))(p^{h_p},p^m)^{a}(p^{k_p},p^m)^{b}}{p^{m(a+b+c)}}g_c(p^m)\right).\]
We now split this product into three parts, the first over the primes $p\nmid hk$ and other two over those for which $p|h$ and $p|k$. 

If $p\nmid hk$ then we have factors  of the form
\[\sum_{m\geq 0}\frac{|\chi(p^m)|^2g_c(p^m)}{p^{m(a+b+c)}}=1+(1-p^c)\sum_{m\geq 1}\left(\frac{|\chi(p)|^2}{p^{a+b+c}}\right)^m=\frac{1-p^{-a-b}}{1-p^{-a-b-c}}\]
since $p\nmid q$.
If $p|h$ then we have factors of the form 
\[1+(1-p^c)\sum_{m\geq 1}\frac{\chi(p^m/(p^{h_p},p^m))\overline{\chi}(p^m)(p^{h_p},p^m)^a}{p^{m(a+b+c)}}.\]
Now 
\begin{eqnarray*}&&\sum_{m\geq 1}\frac{\chi(p^m/(p^{h_p},p^m))\overline{\chi}(p^m)(p^{h_p},p^m)^a}{p^{m(a+b+c)}}
\\&=&\sum_{m= 1}^{h_p}\frac{\chi(1)\overline{\chi}(p^m)p^{ma}}{p^{m(a+b+c)}}+\sum_{m=h_p+1}^\infty\frac{\chi(p^{m-h_p})\overline{\chi}(p^m)p^{h_pa}}{p^{m(a+b+c)}}
\\&=&\frac{\overline{\chi}(p)}{p^{b+c}}\frac{1-\overline{\chi}(p)^{h_p}p^{-h_p(b+c)}}{1-\overline{\chi}(p)p^{-b-c}}+p^{h_pa}\sum_{m\geq 1}\frac{\chi(p^m)\overline{\chi}(p^{m+h_p})}{p^{(a+b+c)(m+h_p)}}
\\&=&\frac{\overline{\chi}(p)}{p^{b+c}}\frac{1-\overline{\chi}(p)^{h_p}p^{-h_p(b+c)}}{1-\overline{\chi}(p)p^{-b-c}}
+\overline{\chi}(p)^{h_p}p^{-h_p(b+c)}\frac{p^{-a-b-c}}{1-p^{-a-b-c}}
\\&=&p^{-b-c}\bigg[(\overline{\chi}(p)-\overline{\chi}(p)^{h_p+1}p^{-h_p(b+c)})(1-p^{-a-b-c})+\overline{\chi}(p)^{h_p}
\\&&\times p^{-h_p(b+c)}p^{-a}(1-\overline{\chi}(p)p^{-b-c})\bigg]/(1-\overline{\chi}(p)p^{-b-c})(1-p^{-a-b-c})
\\&=&p^{-b-c}\bigg[\overline{\chi}(p)-\overline{\chi}(p)^{h_p+1}p^{-h_p(b+c)}-\overline{\chi}(p)p^{-a-b-c}+\overline{\chi}(p)^{h_p}p^{-h_p(b+c)}p^{-a}\bigg]
\\&&/(1-\overline{\chi}(p)p^{-b-c})(1-p^{-a-b-c})
\end{eqnarray*}
The numerator of the local factor is thus given by
\begin{eqnarray*}&&(1-\overline{\chi}(p)p^{-b-c})(1-p^{-a-b-c})+(1-p^c)p^{-b-c}
\\&&\times(\overline{\chi}(p)-\overline{\chi}(p)^{h_p+1}p^{-h_p(b+c)}-\overline{\chi}(p)p^{-a-b-c}+\overline{\chi}(p)^{h_p}p^{-h_p(b+c)}p^{-a})
\\&=&1-p^{-a-b-c}-\overline{\chi}(p)^{h_p+1}p^{-(h_p+1)(b+c)}+\overline{\chi}(p)^{h_p}p^{-h_p(b+c)}p^{-a}
\\&&-\overline{\chi}(p)p^{-b}+\overline{\chi}(p)^{h_p+1}p^{-h_p(b+c)}p^{-b}+\overline{\chi}(p)p^{-a-2b-c}-\overline{\chi}(p)^{h_p}p^{-h_p(b+c)}p^{-a-b}
\\&=&(1-\overline{\chi}(p)p^{-b})(1-p^{-a-b-c})+\overline{\chi}(p)^{h_p}p^{-h_p(b+c)}p^{-b}
 (\overline{\chi}(p)-p^{-a})(1-p^{-c})
\end{eqnarray*}
If $p|k$ then we have the same except $\overline{\chi}$ is replaced by $\chi$ and $a$ and $b$ are interchanged. Therefore
\begin{eqnarray*}&&\frac{F(a,b,c)}{\zeta(c+1)}=
\frac{\zeta(a+b+c)}{\zeta(a+b)}\prod_{p|q}\left(\frac{1-p^{-a-b-c}}{1-p^{-a-b}}\right)
\\&\times&\prod_{\substack{p\nmid q\\p|h}}\left(\frac{(1-\overline{\chi}(p)p^{-b})(1-p^{-a-b-c})+\overline{\chi}(p)^{h_p}p^{-h_p(b+c)}p^{-b}
 (\overline{\chi}(p)-p^{-a})(1-p^{-c})}{(1-\overline{\chi}(p)p^{-b-c})(1-p^{-a-b})}\right)
\\&\times&\prod_{\substack{p\nmid q\\p|k}}\left(\frac{(1-{\chi}(p)p^{-a})(1-p^{-a-b-c})+{\chi}(p)^{k_p}p^{-k_p(a+c)}p^{-a}
 ({\chi}(p)-p^{-b})(1-p^{-c})}{(1-{\chi}(p)p^{-a-c})(1-p^{-a-b})}\right)
\end{eqnarray*}
We can now substitute the values for $a,b,c$ to give the desired result.
\end{proof}

We now turn to $U_{22}(s)$. By (\ref{S_22}) and (\ref{form for U_ij}) we have
\begin{equation}\frac{U_{22}(s)q^{-1-\alpha+\beta-\gamma+\delta}}{L_{-\alpha,-\beta}(\overline{\chi})L_{-\gamma,-\delta}({\chi})}
=\sum_{r=1}^\infty\sum_{d\in P_{22}}\frac{c_d(r)\overline{\chi}(h_{(d)}){\chi}(k_{(d)})(h,d)^{1+\alpha-\beta}(k,d)^{1+\gamma-\delta}}{d^{2+\alpha-\beta+\gamma-\delta}r^{\beta+\delta+2s}}.\nonumber
\end{equation}

\begin{prop}\label{U_22}Let $Q_{22}(s)=Q_{11,-\boldsymbol{\gamma},q}(-s)$ . Then
\begin{equation}\begin{split}\label{U_22 form}\,\,\,\,{U_{22}(s)}=&\frac{L_{-\alpha,-\beta}(\overline{\chi})L_{-\gamma,-\delta}({\chi})}{q^{\beta+\delta+2s}}\frac{\zeta(\beta+\delta+2s)\zeta(1+\alpha+\gamma+2s)}{\zeta(2+\alpha-\beta+\gamma-\delta)} 
\\ &\times Q_{22}(s)C_{22,\boldsymbol{\alpha},h,k}(s)
\end{split}\end{equation}
where 
\begin{equation}C_{22,\boldsymbol{\alpha},h,k}(s)=h(q)^{-\beta-\gamma-2s}k(q)^{-\alpha-\delta-2s}C_{22,\alpha,\beta,\gamma,\delta,h}(s,\overline{\chi})C_{22,\gamma,\delta,\alpha,\beta,k}(s,{\chi})\end{equation}
and
\begin{equation}C_{22,\alpha,\beta,\gamma,\delta,h}(s,\overline{\chi})=\prod_{\substack{p\nmid q\\p|h}}\left(\frac{C^{(0)}_{22}(s)-p^{-1}C^{(1)}_{22}(s)+p^{-2}C^{(2)}_{22}(s)}{(1-\overline{\chi}(p)p^{\beta+\gamma+2s})(1-p^{-2-\alpha+\beta-\gamma+\delta})}\right)\end{equation}
with
\begin{align}C^{(0)}_{22}(s)=&p^{-h_p(\beta+\gamma+2s)}-\overline{\chi}(p)^{h_p+1}p^{\beta+\gamma+2s}
\\C^{(1)}_{22}(s)=&(p^{-h_p(\beta+\gamma+2s)}-\overline{\chi}(p)^{h_p})(p^{\beta+\delta+2s}+\overline{\chi}(p)p^{-\alpha+\beta})
\\C^{(2)}_{22}(s)=&p^{-h_p(\beta+\gamma+2s)}\overline{\chi}(p)p^{-\alpha+2\beta+\delta+2s}-\overline{\chi}(p)^{h_p}p^{-\alpha+\beta-\gamma+\delta}.\end{align}
\end{prop}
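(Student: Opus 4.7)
The plan is to adapt the proof of Proposition \ref{U_11} essentially verbatim, handling the ``$q$-part'' of the new sum by hand. By Lemma \ref{P_ij lem} every $d \in P_{22}$ may be written uniquely as $d = D\ell$ with $D = q h(q) k(q)$ and $\ell \geq 1$ arbitrary, and since $q \mid d$ in every summand we have $c_d(r,|\chi|^2) = c_d(r)$. Thus the identity $\sum_{r \geq 1} c_d(r) r^{-c-1} = \zeta(c+1) d^{-c} \sum_{n \mid d} \mu(n) n^c$ used in Proposition \ref{U_11} applies, reducing the computation of $U_{22}(s)$ to that of a Dirichlet series in $\ell$ which factors as an Euler product.

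Setting $a = 1+\alpha-\beta$, $b = 1+\gamma-\delta$, and $c = \beta+\delta+2s$, I would next evaluate the local Euler factor at each prime $p$ separately. For primes $p \nmid hkq$ the computation is identical to the corresponding one in Proposition \ref{U_11} and produces $(1-p^{-a-b})/(1-p^{-a-b-c})$, yielding the zeta ratio appearing in (\ref{U_22 form}). For primes $p \mid h$ with $p \nmid q$ (and symmetrically for $p \mid k$, $p \nmid q$), the local factor is of the same shape as the corresponding factor in the $U_{11}$ computation and, after a routine rearrangement, is exactly $C_{22,\alpha,\beta,\gamma,\delta,h}(s,\overline{\chi})$ as stated; the symmetry with Proposition \ref{U_11} under the change of parameters $(\alpha,\beta,\gamma,\delta,s) \mapsto (-\gamma,-\delta,-\alpha,-\beta,-s)$ serves as a reliable guide and sanity check throughout.

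The main obstacle, and the step I expect to demand the most care, is the local computation at primes $p \mid q$. Here $D_p \geq q_p + h_p + k_p$, so $(h, D\ell)_p = h_p$ and $(k, D\ell)_p = k_p$ for every $\ell_p \geq 0$, and both character values $\overline{\chi}(h_{(D\ell)})$ and $\chi(k_{(D\ell)})$ are independent of $\ell$ at $p$. The local sum then reduces to a geometric series in $p^{-m(a+b+c)}$ weighted by $g_c(p^{D_p+m})$, which equals $1 - p^c$ as soon as $D_p + m \geq 1$; summing gives a closed form. What must be done carefully is to track all the accumulated powers of $p$ coming from $(h,D\ell)^a (k,D\ell)^b$, from $(D\ell)^{-(a+b+c)}$, and from the prefactor $q^{-1-\alpha+\beta-\gamma+\delta}$ appearing in (\ref{S_22}), and to verify that after reorganisation they produce exactly the finite Euler product $Q_{22}(s) = Q_{11,-\boldsymbol{\gamma},q}(-s)$, together with the prefactor $q^{-\beta-\delta-2s}$ outside and the extra $h(q)^{-\beta-\gamma-2s} k(q)^{-\alpha-\delta-2s}$ sitting inside $C_{22,\boldsymbol{\alpha},h,k}(s)$.

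Combining the three contributions and reinstating the outer factor $L_{-\alpha,-\beta}(\overline{\chi}) L_{-\gamma,-\delta}(\chi) q^{1+\alpha-\beta+\gamma-\delta}$ then yields formula (\ref{U_22 form}).
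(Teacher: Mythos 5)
Your plan reproduces the paper's own proof of Proposition \ref{U_22} essentially step for step: parametrise $d\in P_{22}$ as $d=qh(q)k(q)\ell$ via Lemma \ref{P_ij lem}, note $c_d(r,|\chi|^2)=c_d(r)$, perform the $r$-sum with the Ramanujan-sum identity, split the resulting Euler product into primes with $p\mid q$, $p\nmid qhk$, and $p\mid h$ or $p\mid k$, and track the accumulated powers of $q$, $h(q)$, $k(q)$ to extract the prefactors. The one slip is $c=\beta+\delta+2s$; it should be $c=-1+\beta+\delta+2s$ (so that $r^{-c-1}=r^{-\beta-\delta-2s}$ and $\zeta(c+1)=\zeta(\beta+\delta+2s)$, $a+b+c=1+\alpha+\gamma+2s$ as in (\ref{U_22 form})), but this is a bookkeeping error rather than a gap in the method.
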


\begin{proof}As before we define a new function
\begin{equation}G(a,b,c)=\sum_{r=1}^\infty\sum_{d\in P_{22}}\frac{c_d(r)\overline{\chi}(h_{(d)}){\chi}(k_{(d)})(h,d)^a(k,d)^b}{d^{a+b}r^{c+1}}\end{equation}
so that
\[\frac{U_{22}(s)q^{-1-\alpha+\beta-\gamma+\delta}}{L_{-\alpha,-\beta}(\overline{\chi})L_{-\gamma,-\delta}({\chi})}=G(1+\alpha-\beta,1+\gamma-\delta,-1+\beta+\delta+2s).\]
As in Propostion \ref{U_11} we first perform the sum over $r$ and get

\begin{equation}G(a,b,c)=\zeta(c+1)\sum_{d\in P_{22}}\frac{\overline{\chi}(h_{(d)}){\chi}(k_{(d)})(h,d)^a(k,d)^b}{d^{a+b+c}}g_c(d)\end{equation}
where $g_c(d)=\sum_{n|d}n^c\mu(n)$. By Lemma \ref{P_ij lem} we may write $d=qh(q)k(q)l$ with $l\geq 1$. This implies that $(h,d)=h(q)\cdot(h^*,l)$ and $(k,d)=k(q)\cdot(k^*,l)$. Therefore
\begin{equation}G(a,b,c)=A\sum_{l=1}^\infty\frac{\overline{\chi}(h^*/(h^*,l)){\chi}(k^*/(k^*,l))(h^*,l)^a(k^*,l)^b}{l^{a+b+c}}g_c(qh(q)k(q)l)
\end{equation}
where
\begin{equation}A=\frac{\zeta(c+1)}{q^{a+b+c}h(q)^{b+c}k(q)^{a+c}}.
\end{equation}
Writing the Dirichlet series as an Euler product we have 
\begin{equation}\begin{split}&\frac{ G(a,b,c)}{A}
\\=&\prod_p \sum_{m\geq 0}\frac{\overline{\chi}\left(\frac{p^{h^*_p}}{(p^{h^*_p},p^m)}\right){\chi}\left(\frac{p^{k^*_p}}{(p^{k^*_p},p^m)}\right)(p^{h^*_p},p^m)^{a}(p^{k^*_p},p^m)^{b}}{p^{m(a+b+c)}}g_c(p^{q_p+h(q)_p+k(q)_p+m})
\\=&\prod_{p\nmid q}(\star)\prod_{p|q}(\star).
\end{split}\end{equation}
We deal with product over $p| q$ first. In this case we have $h^*_p=k^*_p=0$ and $q_p\geq 1$. Consequently, we have a local factor of the form 
\[\sum_{m\geq 0}\frac{g_c(p^{m+q_p+\cdots})}{p^{m(a+b+c)}}=(1-p^c)\sum_{m\geq 0}p^{-m(a+b+c)}=\frac{1-p^c}{1-p^{-a-b-c}}.\]
If  $p\nmid q$ and $p\nmid hk$ then we have a local factor of the form 
\[\sum_{m\geq 0}\frac{g_c(p^m)}{p^{m(a+b+c)}}=\frac{1-p^{-a-b}}{1-p^{-a-b-c}}.\] 
Finally, if  $p\nmid q$ and $p|h$ then we have a local factor of the form 
\[\overline{\chi}(p)^{h_p}+(1-p^c)\sum_{m\geq 1}\frac{\overline{\chi}\left(\frac{p^{h_p}}{(p^{h_p},p^m)}\right)(p^{h_p},p^m)^a}{p^{m(a+b+c)}}.\]
Computing this in a similar fashion to proposition \ref{U_11}
we see the local factor is given by
\[\frac{p^{-h_p(b+c)}(1-p^c)(1-\overline{\chi}(p)p^{-a})+\overline{\chi}(p)^{h_p}(p^c-p^{-a-b})(1-\overline{\chi}(p)p^b)}{(1-\overline{\chi}(p)p^{b+c})(1-p^{-a-b-c})}.\]
Therefore, similarly to before, we find 
\begin{equation}\begin{split}&\frac{G(a,b,c)}{A}=\frac{\zeta(a+b+c)}{\zeta(a+b)}\prod_{p|q}\left(\frac{1-p^{c}}{1-p^{-a-b}}\right)
\\&\times\prod_{\substack{p\nmid q \\p| h}}\left(\frac{p^{-h_p(b+c)}(1-p^c)(1-\overline{\chi}(p)p^{-a})+\overline{\chi}(p)^{h_p}(p^c-p^{-a-b})(1-\overline{\chi}(p)p^b)}{(1-\overline{\chi}(p)p^{b+c})(1-p^{-a-b})}\right)
\\&\times\prod_{\substack{p\nmid q \\p| h}}\left(\frac{p^{-k_p(a+c)}(1-p^c)(1-{\chi}(p)p^{-b})+{\chi}(p)^{k_p}(p^c-p^{-a-b})(1-{\chi}(p)p^a)}{(1-{\chi}(p)p^{a+c})(1-p^{-a-b})}\right)
\end{split}\end{equation}
Inputting the values of $a,b,c$ and a brief computation gives the result.
\end{proof}

At this point we note that there exist certain similarities between $U_{11}$ and $U_{22}$. Indeed, the equivalent of $U_{11}$ in $I^{(2)}_O$ contains a factor of $q^{-\beta-\delta}$ and has undergone the transformation $\boldsymbol{\alpha}\mapsto -\boldsymbol{\gamma}$. Therefore the $L$ and $\zeta(2+\cdots)^{-1}$ factors match with those of $U^{(1)}_{22}$ as does the $Q$ factor after the transformation $s\mapsto -s$. It is a surprising fact that the finite Euler products $C_{ii,\boldsymbol{\alpha},h,k}(s)$ also possess this symmetry. Indeed, we have

\begin{prop}\label{C_11 functional prop} 
\begin{equation}\label{C_11 functional 1}h^{\alpha}k^\gamma (hk)^{-s}C_{11,\boldsymbol{\alpha},h,k}(-s)=h^{-\delta}k^{-\beta}(hk)^sC_{22,-\boldsymbol{\gamma},h,k}(s).
\end{equation}
By permuting the shifts we also have
\begin{equation}\label{C_11 functional 2}h^{\beta}k^\delta (hk)^{-s}C_{22,\boldsymbol{\alpha},h,k}(-s)=h^{-\gamma}k^{-\alpha}(hk)^sC_{11,-\boldsymbol{\gamma},h,k}(s).
\end{equation}
\end{prop}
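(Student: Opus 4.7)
The plan is to prove (\ref{C_11 functional 1}) one Euler factor at a time. Observe first that $C_{11,\boldsymbol{\alpha},h,k}(s)$ and $C_{22,\boldsymbol{\alpha},h,k}(s)$ both factor into an $h$-piece and a $k$-piece which are related by the symmetry $(\alpha,\beta)\leftrightarrow(\gamma,\delta)$, $\chi\leftrightarrow\overline{\chi}$, $h\leftrightarrow k$, and the prefactors split as $h^{\alpha-s}\cdot k^{\gamma-s}$ on the left and $h^{-(\delta-s)}\cdot k^{-(\beta-s)}$ on the right. Hence the identity decouples into an $h$-identity and a $k$-identity, the latter being the symmetric image of the former, so it suffices to establish the $h$-identity.

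Next I would separate primes $p\mid h$ according to whether $p\mid q$ or $p\nmid q$. At a prime $p\mid q$ with $p\mid h$, the Euler products defining $C_{11,\ldots,h}$ and $C_{22,\ldots,h}$ are empty, so only the prefactors contribute: the LHS gives $p^{h_p(\alpha-s)}$, while on the RHS the factor $h(q)^{-\beta-\gamma-2s}$ of $C_{22,\boldsymbol{\alpha},h,k}(s)$ becomes $h(q)^{\delta+\alpha-2s}$ after the substitution $\boldsymbol{\alpha}\mapsto -\boldsymbol{\gamma}$, contributing $p^{h_p(\delta+\alpha-2s)}$ which, combined with $p^{-h_p(\delta-s)}$, yields $p^{h_p(\alpha-s)}$. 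So these primes match. For a prime $p\nmid q$ with $p\mid h$, the required local identity is
\begin{equation*}
p^{h_p(\alpha-s)}\bigl[C_{11,\alpha,\beta,\gamma,\delta,h}(-s,\overline{\chi})\bigr]_p \;=\; p^{-h_p(\delta-s)}\bigl[C_{22,-\gamma,-\delta,-\alpha,-\beta,h}(s,\overline{\chi})\bigr]_p.
\end{equation*}
The two denominators are both $(1-\overline{\chi}(p)p^{-\alpha-\delta+2s})(1-p^{-2+\alpha-\beta+\gamma-\delta})$, as one sees by substituting $s\mapsto -s$ on the left and $(\alpha,\beta,\gamma,\delta)\mapsto(-\gamma,-\delta,-\alpha,-\beta)$ on the right. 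The verification therefore reduces to matching the numerators $C^{(0)}-p^{-1}C^{(1)}+p^{-2}C^{(2)}$ monomial by monomial.

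I would then check the three equalities $p^{h_p(\alpha-s)}C^{(j)}_{11}(-s)=p^{-h_p(\delta-s)}C^{(j)}_{22,-\gamma,-\delta,-\alpha,-\beta}(s)$ for $j=0,1,2$ by direct expansion. For $j=0$ both sides collapse to the binomial $p^{h_p(\alpha-s)}-\overline{\chi}(p)^{h_p+1}p^{-h_p\delta+h_ps-\alpha-\delta+2s}$. The $j=1$ case splits as a product of two factors; the factor $(1-\overline{\chi}(p)^{h_p}p^{-h_p(\alpha+\delta-2s)})$ on the left is turned into $(p^{h_p(\alpha+\delta-2s)}-\overline{\chi}(p)^{h_p})$ on the right exactly by the redistribution of $p^{h_p(\alpha-s)}$ vs.\ $p^{-h_p(\delta-s)}$, while the remaining factor $(\overline{\chi}(p)p^{\gamma-\delta}+p^{-\beta-\delta+2s})$ is invariant. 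The $j=2$ case is a similar, if slightly longer, comparison of four monomials and the exponents line up identically.

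Finally, (\ref{C_11 functional 2}) follows from (\ref{C_11 functional 1}) by the substitution $(\alpha,\beta,\gamma,\delta)\mapsto(-\gamma,-\delta,-\alpha,-\beta)$ together with $s\mapsto -s$, which interchanges the roles of $\boldsymbol{\alpha}$ and $-\boldsymbol{\gamma}$. The only real obstacle is the bookkeeping of the shift parameters: each $C^{(j)}$ is a short sum of monomials in $p^{\pm(\cdots)}$ weighted by powers of $\overline{\chi}(p)^{h_p}$, and one has to be careful that the $h_p$-dependent exponents on the two sides match after the appropriate redistribution. There are no genuinely analytic difficulties; the proof is essentially a symbolic check.
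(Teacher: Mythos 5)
Your proposal is correct and follows essentially the same approach as the paper: reduce to the single-variable $h$-identity by the built-in $h\leftrightarrow k$ symmetry of the factorizations, peel off the $q$-part of $h$ (which only enters through prefactors since the Euler products in $C_{11,\ldots,h}$ and $C_{22,\ldots,h}$ run over $p\nmid q$), and then verify the local numerator identities $p^{h_p(\alpha+\delta-2s)}C^{(i)}_{11,\boldsymbol{\alpha}}(-s)=C^{(i)}_{22,-\boldsymbol{\gamma}}(s)$ for $i=0,1,2$ by inspection. The only cosmetic difference is that the paper absorbs the $q$-part into a single $(h/h(q))^{\alpha+\delta-2s}$ prefactor before localizing, whereas you treat the primes $p\mid q$ as a separate (and trivially matching) case; the substance is identical.
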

\begin{proof}Since 
\[C_{11,\boldsymbol{\alpha},h,k}(s)=C_{11,\alpha,\beta,\gamma,\delta,h}(s,\overline{\chi})C_{11,\gamma,\delta,\alpha,\beta,k}(s,\chi)\]
and 
\[C_{22,-\boldsymbol{\gamma},h,k}(s)=h(q)^{\alpha+\delta-2s}k(q)^{\beta+\gamma-2s}C_{22,-\gamma,-\delta,-\alpha,-\beta,h}(s,\overline{\chi})C_{22,-\alpha,-\beta,-\gamma,-\delta,k}(s,\chi)\]
we only need to prove 
\[\left(\frac{h}{h(q)}\right)^{\alpha+\delta-2s}C_{11,\alpha,\beta,\gamma,\delta,h}(-s,\overline{\chi})= C_{22,-\gamma,-\delta,-\alpha,-\beta,h}(s,\overline{\chi})\]
since the result then follows by symmetry. It suffices to check the formula at each prime dividing $h$. By inspection of the Euler products we need to show 
\[p^{h_p(\alpha+\delta-2s)}C_{11,\alpha,\beta,\gamma,\delta,h}^{(i)}(-s)=C_{22,-\gamma,-\delta,-\alpha,-\beta,h}^{(i)}(s)\] 
for $i=0,1,2$ and each of these is immediately apparent when written out. 
\end{proof}

We now work with $U_{12}$ and $U_{21}$ which in the above sense are self-similar. First, we need a technical lemma

\begin{lem} Let $c_d(r,\chi)$ be given by (\ref{ram sum}) and suppose $q|d$. Then
\begin{equation}\label{c_d form}c_d(r,\chi)=\overline{G(\overline{\chi})}\sum_{\substack{n|r\\n|d/q}}\mu\left(\frac{d/q}{n}\right)\chi\left(\frac{d/q}{n}\right)\overline{\chi}\left(\frac{r}{n}\right)n.
\end{equation}
\end{lem}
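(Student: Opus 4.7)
The plan is to evaluate $c_d(r,\chi)$ by first removing the coprimality condition via Möbius inversion, and then exploiting the fact that $q\mid d$ to collapse the inner exponential sum down to a classical Gauss sum.

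Write $\mathbf{1}_{(c,d)=1}=\sum_{e\mid (c,d)}\mu(e)$, interchange the summations, and substitute $c=ec'$ to obtain
\[
c_d(r,\chi)=\sum_{e\mid d}\mu(e)\sum_{c'=1}^{d/e}\chi(ec')\,e_{d/e}(-c'r).
\]
Since $\chi$ is supported on residues coprime to $q$, the factor $\chi(ec')$ vanishes unless $(e,q)=1$, in which case $\chi(ec')=\chi(e)\chi(c')$. In particular the surviving $e$ satisfy $e\mid d/q$, and we may set $D=d/q$ and restrict to $e\mid D$. Write $d/e=qD/e=qD''$ with $D''=D/e$, and decompose $c'=a+qj$ with $1\le a\le q$ and $0\le j<D''$; since $\chi$ has period $q$, the inner sum factors as
\[
\Bigl(\sum_{a=1}^q \chi(a)\,e_{qD''}(-ar)\Bigr)\Bigl(\sum_{j=0}^{D''-1} e_{D''}(-jr)\Bigr).
\]
The $j$-sum is $D''$ if $D''\mid r$ and $0$ otherwise, forcing the divisibility $D/e\mid r$. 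When $D''\mid r$, writing $r=D''r'$, the $a$-sum becomes $G(-r',\chi)=\overline{\chi}(-r')G(\chi)=\chi(-1)\overline{\chi}(r')G(\chi)$ by the standard Gauss-sum identity for primitive $\chi$, and the relation $\overline{G(\overline{\chi})}=\chi(-1)G(\chi)$ (which follows from $\overline{G(\chi)}=\chi(-1)G(\overline{\chi})$ together with $|G(\chi)|^{2}=q$) identifies the prefactor with $\overline{G(\overline{\chi})}$.

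Collecting everything,
\[
c_d(r,\chi)=\overline{G(\overline{\chi})}\sum_{\substack{e\mid D\\ D/e\mid r}}\mu(e)\chi(e)\,(D/e)\,\overline{\chi}(re/D),
\]
and the change of variables $n=D/e$ (so $e=D/n$, the condition $D/e\mid r$ becomes $n\mid r$, and $re/D=r/n$) yields exactly the claimed formula. The only subtlety lies in tracking the two independent divisibility constraints that appear after the $j$-sum and in identifying the Gauss-sum prefactor as $\overline{G(\overline{\chi})}$ rather than some other conjugate; both are purely bookkeeping, so no genuine obstacle is anticipated.
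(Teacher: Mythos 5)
Your proof is correct and follows essentially the same route as the paper: remove the coprimality constraint by Möbius ($\mathbf{1}_{(c,d)=1}=\sum_{e\mid(c,d)}\mu(e)$), observe that $\chi(e)=0$ unless $(e,q)=1$ so that the surviving $e$ divide $d/q$, split the inner sum modulo $q$ and modulo $d/(eq)$ to obtain a Ramanujan-type divisibility condition times a Gauss sum, and finally change variables $n=D/e$. The only cosmetic difference is variable naming and the paper's slightly more terse handling of the $d/m=aq$ step; the Gauss-sum prefactor is identified via $\overline{G(\overline{\chi})}=\chi(-1)G(\chi)$ in both arguments.
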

\begin{proof}We have \vspace{0.3cm}
\begin{equation}\begin{split}c_d(r,\chi)&=\sum_{\substack{n=1\\(n,d)=1}}^d \chi(n)e_d(-nr)
=\sum_{n=1}^d\bigg(\sum_{\substack{m|n\\m|d}}\mu(m)\bigg)\chi(n)e_d(-nr)
\\&=\sum_{m|d}\mu(m)\sum_{n=1}^{d/m}\chi(mn)e_d(-mnr)
\\&=\sum_{\substack{m|d\\m\nmid q}}\mu(m)\chi(m)\sum_{n=1}^{d/m}\chi(n)e_{d/m}(-nr)
\end{split}\end{equation}
where the condition $m\nmid q$ is merely for emphasis. Since $q|d$ we may write $d/m=aq$ for some $a$ say. Now,
\begin{equation}\begin{split}\sum_{n=1}^{aq}\overline{\chi}(n)e_{aq}(nr)&=\sum_{n=1}^q\overline{\chi}(n)e_{aq}(nr)\sum_{k=0}^{a-1}e_a(kr)
\\&=\begin{cases}a\sum_{n=1}^q\overline{\chi}(n)e_{q}(nr/a)&\,\,\text{if}\,\,a|r,\\0&\,\,\text{otherwise.}\end{cases}
\end{split}\end{equation}
Since $\sum_{n=1}^q\overline{\chi}(n)e_{q}(nr/a)=\chi(r/a)G(\overline{\chi})$ we have 
\begin{equation}\begin{split}c_d(r,\chi)&=\overline{G(\overline{\chi})}\sum_{\substack{m|d\\\frac{d}{mq}|r}}\mu(m)\chi(m)\overline{\chi}\left(\frac{r}{d/mq}\right)\frac{d}{mq}
\\&=\overline{G(\overline{\chi})}\sum_{\substack{c|d\\c|qr}}\mu\left(\frac{d}{c}\right)\chi\left(\frac{d}{c}\right)\overline{\chi}\left(\frac{qr}{c}\right)\frac{c}{q}.
\end{split}\end{equation}
The result now follows on applying the change of variables $c/q\mapsto n$.
\end{proof}
As a corollary, for $d$ divisible by $q$ we have  
\begin{equation}\begin{split}\label{c_d L-function}\frac{1}{\overline{G(\overline{\chi})}}\sum_{r=1}^\infty\frac{c_d(r,\chi)}{r^s}&=\sum_{r=1}^\infty\frac{1}{r^s}\sum_{\substack{n|r\\n|d/q}}\mu\left(\frac{d/q}{n}\right)\chi\left(\frac{d/q}{n}\right)\overline{\chi}\left(\frac{r}{n}\right)n
\\&=\sum_{n|d/q}\mu\left(\frac{d/q}{n}\right)\chi\left(\frac{d/q}{n}\right)n\sum_{m=1}^\infty \frac{\overline{\chi}(m)}{(mn)^s}
\\&=L(s,\overline{\chi})\sum_{n|d/q}\mu\left(\frac{d/q}{n}\right)\chi\left(\frac{d/q}{n}\right)n^{1-s}
\\&=L(s,\overline{\chi})\left(\frac{q}{d}\right)^{s-1}\sum_{n|d/q}\mu\left(n\right)\chi\left(n\right)n^{s-1}.
\end{split}\end{equation}
Now, by formula (\ref{S_12}) we have
\begin{multline}{U_{12}(s)}=\chi(-1)G(\overline{\chi})q^{\gamma-\delta}{L_{\alpha,\beta}(\chi)L_{-\gamma,-\delta}(\chi)}
\\\times\sum_{r=1}^\infty\sum_{d\in P_{12}}\frac{c_d(r,\chi)\chi(d_{(h)}){\chi}(k_{(d)})(h,d)^{1-\alpha+\beta}(k,d)^{1+\gamma-\delta}}{d^{2-\alpha+\beta+\gamma-\delta}r^{\alpha+\delta+2s}}.
\end{multline}

\begin{prop}\label{U_12}Suppose $q|h$. Then $U_{12}(s)$ exists and has the form
\begin{equation}\begin{split}\frac{U_{12}(s)}{L_{\alpha,\beta}(\chi)L_{-\gamma,-\delta}(\chi)}=&\chi(k)\frac{L(\alpha+\delta+2s,\overline{\chi})L(1+\beta+\gamma+2s,\chi)}{L(2-\alpha+\beta+\gamma-\delta,\chi^2)}\sum_{m|h(q)/q}\frac{1}{m^{\alpha+\gamma+2s}}
\\&\times C_{12,\boldsymbol{\boldsymbol{\alpha}},h,k}(s)
\end{split}\end{equation}
where
\begin{equation}C_{12,\boldsymbol{\alpha},h,k}(s)=C_{12,\alpha,\beta,\gamma,\delta,h}(s)C_{12,\delta,\gamma,\beta,\alpha,k}(s)\end{equation}
and
\begin{equation}C_{12,\alpha,\beta,\gamma,\delta,h}(s)=\prod_{\substack{p\nmid q\\p|h}}\left(\frac{C^{(0)}_{12}(s)-p^{-1}C^{(1)}_{12}(s)+p^{-2}C^{(2)}_{12}(s)}{(1-p^{-\alpha-\gamma-2s})(1-\chi(p)^2p^{-2+\alpha-\beta-\gamma+\delta})}\right)\end{equation}
with
\begin{eqnarray}C^{(0)}_{12}(s)&=&1-p^{-(h_p+1)(\alpha+\gamma+2s)}
\\C^{(1)}_{12}(s)&=&\chi(p)(p^{\delta-\gamma}+p^{-\beta-\gamma-2s})(1-p^{-h_p(\alpha+\gamma+2s)})
\\C^{(2)}_{12}(s)&=&\chi(p)^2p^{\delta-\beta}(p^{-2(\gamma+s)}-p^{2(\alpha+s)}p^{-(h_p+1)(\alpha+\gamma+2s)})
\end{eqnarray}
\end{prop}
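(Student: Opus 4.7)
The plan is to follow the template of Propositions \ref{U_11} and \ref{U_22} -- perform the $r$-sum first, then factorize the remaining $d$-sum as an Euler product over primes -- but now the character twist in $c_d(r,\chi)$ forces us to replace the classical Ramanujan-sum identity (Titchmarsh (1.5.5)) used in Proposition \ref{U_11} with the formula (\ref{c_d L-function}). By Lemma \ref{P_ij lem}, since $q \mid h$, every $d \in P_{12}$ is uniquely written $d = qml$ with $m \mid h(q)/q$ and $(l,q) = 1$; note that the prime support of $m$ lies in $q$ while $(l,q)=1$, so $(m,l)=1$. The quantities $(h,d) = qm(h^{*},l)$, $(k,d) = (k,l)$, $d_{(h)} = l/(h^{*},l)$, and $k_{(d)} = k/(k,l)$ each factor cleanly between $m$ and $l$.

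Applying (\ref{c_d L-function}) to the inner sum over $r$ produces the factor $L(\alpha+\delta+2s,\overline{\chi})$, a Gauss-sum factor $\overline{G(\overline{\chi})}$ (which combines with the $\chi(-1)G(\overline{\chi})$ sitting in the prefactor of $U_{12}$ via the identity $|G(\overline{\chi})|^{2}=q$), and a Möbius divisor sum $\sum_{n \mid ml}\mu(n)\chi(n) n^{\alpha+\delta+2s-1}$. Since $(m,l)=1$ this sum factors multiplicatively; and since every prime factor of $m$ divides $q$, the sum over $n \mid m$ collapses to the single term $n=1$. Isolating the $m$-dependence from the remaining algebraic factors, the $q$-powers conspire to produce exactly $\sum_{m\mid h(q)/q} m^{-\alpha-\gamma-2s}$, independent of $l$, matching the divisor sum in the statement.

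The remaining sum over $l$ coprime to $q$ becomes an Euler product over primes $p\nmid q$. For $p \nmid hkq$ a short calculation (setting $x = \chi(p)p^{-1-\beta-\gamma-2s}$ and $y = \chi(p)p^{\alpha+\delta+2s-1}$) collapses the local factor to $(1-xy)/(1-x)$, and the product over such $p$ gives $L(1+\beta+\gamma+2s,\chi)/L(2-\alpha+\beta+\gamma-\delta,\chi^{2})$ up to missing Euler factors at $p \mid hk$, which get absorbed into $C_{12}$. For $p \mid h$ (where $h^{*}_p = h_p$) one splits the local sum over $j$ into the two geometric ranges $0 \leq j \leq h_p$ and $j > h_p$ dictated by $(p^{h_p},p^j)$; pooling the two pieces over a common denominator reproduces the three-term numerator $C^{(0)}_{12}(s) - p^{-1}C^{(1)}_{12}(s) + p^{-2}C^{(2)}_{12}(s)$ in $C_{12,\alpha,\beta,\gamma,\delta,h}(s)$. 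For $p \mid k$ the same splitting applies, but now the factor $\chi(p^{k_p}/(p^{k_p},p^j))$ contributes a uniform $\chi(p)^{k_p}$ for $j \leq k_p$; taking the product over all $p \mid k$ factors the overall character $\chi(k)$ out front, with any residual signs combining against the $\chi(-1)$ inherited from the prefactor.

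The main obstacle is not conceptual but bookkeeping: keeping accurate track of the various powers of $q$, the Gauss-sum factors, and the $\chi(p)^{h_p}$ and $\chi(p)^{k_p}$ twists at primes dividing $hk$, and verifying that the two-piece geometric-series sum at each such prime rearranges into exactly the $C^{(i)}_{12}$ expressions displayed in the statement. This is mechanical but lengthy algebra of the same flavour carried out explicitly in the proofs of Propositions \ref{U_11} and \ref{U_22}, with the only new ingredient being the character-twisted $r$-sum identity (\ref{c_d L-function}).
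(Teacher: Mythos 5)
Your proposal follows essentially the same route as the paper's proof: define the auxiliary function in terms of the reparametrised exponents $(a,b,c)$, perform the $r$-sum first using formula (\ref{c_d L-function}) to extract the factor $L(\alpha+\delta+2s,\overline{\chi})$ and the Möbius-twisted divisor sum, decompose $d\in P_{12}$ as $d=qml$ via Lemma \ref{P_ij lem}, observe that $g_c(ml,\chi)=g_c(l,\chi)$ because $m$ has all its prime support in $q$, split off the divisor sum over $m\mid h(q)/q$, and then factor the remaining sum over $l$ as an Euler product with three cases ($p\nmid hkq$, $p\mid h$, $p\mid k$) — including the factoring out of $\chi(p)^{k_p}$ to produce $\chi(k)$. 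Your observation that $(m,l)=1$ and that the $n\mid m$ part of the Möbius sum collapses to $n=1$ is a slightly cleaner way to make the point the paper makes by saying ``$(m,q)>1$ for $m>1$,'' but the underlying argument is identical.
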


\begin{proof}Let 
\begin{equation}H(a,b,c)=\chi(-1)G(\overline{\chi})\sum_{r=1}^\infty\sum_{d\in P_{12}}\frac{c_d(r,\chi)\chi(d_{(h)}){\chi}(k_{(d)})(h,d)^{a}(k,d)^{b}}{d^{a+b}r^{c+1}}
\end{equation}
so that
\begin{equation}\frac{q^{\delta-\gamma}U_{12}(s)}{L_{\alpha,\beta}(\chi)L_{-\gamma,-\delta}(\chi)}=H(1-\alpha+\beta,1+\gamma-\delta,-1+\alpha+\delta+2s)
\end{equation}
Using formula (\ref{c_d L-function}) and noting that $\overline{G(\overline{\chi})}=\overline{\chi}(-1)G(\chi)$ we have 
\begin{equation}H(a,b,c)=q^{c+1}L(c+1,\overline{\chi})\sum_{d\in P_{12}}\frac{\chi(d_{(h)}){\chi}(k_{(d)})(h,d)^{a}(k,d)^{b}}{d^{a+b+c}}g_c(d/q,\chi)
\end{equation}
where 
\begin{equation}g_c(m,\chi)=\sum_{n|m}\mu(n)\chi(n)n^c.
\end{equation}
By Lemma \ref{P_ij lem} we see that $d=qml$ where $(l,q)=1$ and $m$ is a divisor of $h(q)/q$. Consequently, $(h,d)=(h^*,l)(h(q),qm)=(h^*,l)qm$ and $(k,d)=(k^*,l)$ since $k=k^*$. Therefore
\begin{equation}\begin{split}
&\frac{H(a,b,c)}{q^{c+1}L(c+1,\overline{\chi})}
\\=&\frac{1}{q^{b+c}}\sum_{m|h(q)/q}\frac{1}{m^{b+c}}\sum_{\substack{l=1\\(l,q)=1}}^\infty\frac{\chi(l/(h^*,l))\chi(k^*/(k^*,l))(h^*,l)^a(k^*,l)^b}{l^{a+b+c}}g_c(ml,\chi)
\\=&\frac{1}{q^{b+c}}\Bigg(\sum_{m|h(q)/q}\frac{1}{m^{b+c}}\Bigg)\sum_{\substack{l=1\\(l,q)=1}}^\infty\frac{\chi(l/(h^*,l))\chi(k^*/(k^*,l))(h^*,l)^a(k^*,l)^b}{l^{a+b+c}}g_c(l,\chi)\end{split}
\end{equation}
since $(m,q)>1$ for $m>1$. We now express the Dirichlet series as an Euler product.

If $p\nmid hk$ then we have a local factor of the form
\begin{equation}1+(1-\chi(p)p^c)\sum_{j\geq 1}\left(\frac{\chi(p)}{p^{a+b+c}}\right)^m
=\frac{1-\chi(p)^2p^{-a-b}}{1-\chi(p)p^{-a-b-c}}.
\end{equation}

If $p|h$ then we have a local factor of the form
\begin{eqnarray}1+(1-\chi(p)p^c)\sum_{j\geq 1}\frac{\chi(p^m/(p^{h_p},p^m))(p^{h_p},p^m)^a}{p^{m(a+b+c)}}.
\end{eqnarray}
 Computing this similarly to as in Proposition \ref{U_11} we see that the local factor is given by
\begin{equation}\label{local h}\frac{(1-\chi(p)p^{-b})(1-\chi(p)^{-a-b-c})-p^{-(h_p+1)(b+c)}(1-\chi(p)p^{-a})(1-\chi(p)p^c)}{(1-p^{-b-c})(1-\chi(p)p^{-a-b-c})}.
\end{equation}
Finally, if $p|k$ then we have a local factor of the form
\begin{equation}\begin{split}&\chi(p)^{k_p}+(1-\chi(p)p^c)\sum_{j\geq 1}\frac{\chi(p)^m\chi(p^{k_p}/(p^{k_p},p^m))(p^{k_p},p^m)^b}{p^{m(a+b+c)}}
\\=&\chi(p)^{k_p}\left(1+(1-\chi(p)p^c)\sum_{j\geq 1}\frac{\chi(p^m/(p^{k_p},p^m))(p^{k_p},p^m)^b}{p^{m(a+b+c)}}\right).
\end{split}\end{equation}
This can be computed exactly the same as for the local factor at $p|h$ and so we have 
\begin{equation}\begin{split}&\frac{H(a,b,c)}{L(c+1,\overline{\chi})}=\frac{\chi(k)}{q^{b-1}}\frac{L(a+b+c,\chi)}{L(a+b,\chi^2)}\Bigg(\sum_{m|h(q)/q}\frac{1}{m^{b+c}}\Bigg)
\\&\times\prod_{\substack{p\nmid q\\p|h}}\frac{(1-\chi(p)p^{-b})(1-\chi(p)p^{-a-b-c})-p^{-(h_p+1)(b+c)}(1-\chi(p)p^{-a})(1-\chi(p)p^c)}{(1-p^{-b-c})(1-\chi(p)^2p^{-a-b})}
\\&\times\prod_{\substack{p|k}}\frac{(1-\chi(p)p^{-a})(1-\chi(p)p^{-a-b-c})-p^{-(h_p+1)(a+c)}(1-\chi(p)p^{-b})(1-\chi(p)p^c)}{(1-p^{-a-c})(1-\chi(p)^2p^{-a-b})}.
\end{split}\end{equation}
After inputting the values for $a,b,c$ a short compution gives the result.
\end{proof}

To get a functional equation for $C_{12,\boldsymbol{\alpha},h,k}(s)$ we must incorporate the sum over the divisors of $h(q)/q$ as well as an extra factor of $q$ which, happily, makes an appearence in the next section.  
\begin{prop}\label{C_12 functional prop} We have
\begin{equation}\begin{split}\label{C_12 functional}&\frac{1}{q^{\alpha+\delta-s}}\Big(\sum_{m|h(q)/q}\frac{1}{m^{\alpha+\gamma-2s}}\Big)h^\alpha k^\delta (hk)^{-s} C_{12,\boldsymbol{\alpha},h,k}(-s)
\\=&\frac{1}{q^{\beta+\delta}}\frac{1}{q^{-\beta-\gamma+s}}\Big(\sum_{m|h(q)/q}\frac{1}{m^{-\alpha-\gamma+2s}}\Big)h^{-\gamma}k^{-\beta} (hk)^{s} C_{12,-\boldsymbol{\gamma},h,k}(s).
\end{split}\end{equation}
\end{prop}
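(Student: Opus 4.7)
The plan is to reduce the claim to a local functional equation at each prime dividing $h$ or $k$, together with some elementary bookkeeping for the $q$-factors and the divisor sum. Rearranging, the proposition asserts
\[
\frac{C_{12,\boldsymbol{\alpha},h,k}(-s)}{C_{12,-\boldsymbol{\gamma},h,k}(s)}=\frac{q^{\alpha+\gamma-2s}\,h(q)^{\alpha+\gamma-2s}q^{-\alpha-\gamma+2s}\,h^{-\alpha-\gamma+2s}k^{-\beta-\delta+2s}}{1}=(h^*)^{-\alpha-\gamma+2s}\,k^{-\beta-\delta+2s},
\]
where I have used $h=h^*h(q)$, $k=k^*$ (since $q\mid h$ forces $(k,q)=1$), and the substitution $m\mapsto (h(q)/q)m^{-1}$ on the divisor sum $R_\sigma/L_\sigma=(h(q)/q)^{\alpha+\gamma-2s}$. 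So up to this elementary bookkeeping the problem reduces to proving the two local/global factorised identities
\[
C_{12,-\gamma,-\delta,-\alpha,-\beta,h}(s)=(h^*)^{\alpha+\gamma-2s}\,C_{12,\alpha,\beta,\gamma,\delta,h}(-s),\qquad
C_{12,-\beta,-\alpha,-\delta,-\gamma,k}(s)=k^{\beta+\delta-2s}\,C_{12,\delta,\gamma,\beta,\alpha,k}(-s).
\]

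For each prime $p\mid h$ with $p\nmid q$, I will verify the corresponding local identity. Set $A=\alpha+\gamma-2s$, and observe that the substitution $(\boldsymbol{\alpha},-s)\leftrightarrow(-\boldsymbol{\gamma},s)$ sends $A\mapsto -A$, fixes $\beta-\delta$ and $\alpha-\gamma$, and flips $\beta+\delta$. A direct computation with each of $C^{(0)}_{12}$, $C^{(1)}_{12}$, $C^{(2)}_{12}$ then yields the uniform relation
\[
C^{(i)}_{12}(s)\big|_{-\boldsymbol{\gamma}}=-p^{(h_p+1)A}\,C^{(i)}_{12}(-s)\big|_{\boldsymbol{\alpha}}\qquad (i=0,1,2).
\]
For $C^{(1)}_{12}$ this uses the identity $p^{\alpha-\beta}+p^{\alpha+\delta-2s}=p^{A}\bigl(p^{\delta-\gamma}+p^{-\beta-\gamma+2s}\bigr)$, which is the one non-obvious step. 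The denominator $(1-p^{-\alpha-\gamma-2s})(1-\chi(p)^2p^{-2+\alpha-\beta-\gamma+\delta})$ transforms in the same way, picking up only $-p^{A}$ (the $\chi(p)^2$ factor is invariant because $\alpha-\beta-\gamma+\delta$ is preserved). Taking the ratio of numerator to denominator, the two minus signs and the $p^{A}$ cancel, leaving the desired local factor $p^{h_p A}$. Multiplying over $p\mid h$, $p\nmid q$ gives the first identity, and the argument for the $k$-factor is identical after noticing that $C_{12,\delta,\gamma,\beta,\alpha,k}$ is obtained from $C_{12,\alpha,\beta,\gamma,\delta,h}$ by the relabelling $(\alpha,\beta,\gamma,\delta)\mapsto(\delta,\gamma,\beta,\alpha)$, whose corresponding $-\boldsymbol\gamma$-shift is precisely $(-\beta,-\alpha,-\delta,-\gamma)$.

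The main obstacle is the bookkeeping of signs and exponents in the local calculation: one has to verify the uniform relation for all three pieces $C^{(i)}_{12}$ and recognise the algebraic identity for $C^{(1)}_{12}$ that forces the same scaling factor for all three. Once this is in hand, the global identity follows by forming Euler products and pairing with the $q$-factor and divisor-sum symmetries established at the start.
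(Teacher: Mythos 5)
Your proof is correct and follows essentially the same route as the paper's: you peel off the $q$-powers and the divisor sum via the substitution $m\mapsto (h(q)/q)/m$, reduce to a local identity at each prime $p\mid h$, $p\nmid q$ (and the relabelled version for $k$), and verify it piece by piece on the $C^{(i)}_{12}$. The only difference is that you make the paper's ``verified by inspection'' step explicit by exhibiting the uniform scaling factor $-p^{(h_p+1)A}$ and the algebraic identity $p^{\alpha-\beta}+p^{\alpha+\delta-2s}=p^{A}(p^{\delta-\gamma}+p^{-\beta-\gamma+2s})$ that drives it.
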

\begin{proof}Since 
\begin{equation}\sum_{m|h(q)/q}\frac{1}{m^{\alpha+\gamma-2s}}=\left(\frac{h(q)}{q}\right)^{-\alpha-\gamma+2s}\sum_{m|h(q)/q}\frac{1}{m^{-\alpha-\gamma+2s}}\end{equation}
we are required to show 
\begin{equation}\left(\frac{h}{h(q)}\right)^{\alpha+\gamma-2s}k^{\beta+\delta-2s}C_{12,\boldsymbol{\alpha},h,k}(-s)= C_{12,-\boldsymbol{\gamma},h,k}(s).
\end{equation}
Also, since 
\begin{equation}C_{12,\boldsymbol{\alpha},h,k}(s)=C_{12,\alpha,\beta,\gamma,\delta,h}(s)C_{12,\delta,\gamma,\beta,\alpha,k}(s)\end{equation}
it suffices to show
\begin{equation}\left(\frac{h}{h(q)}\right)^{\alpha+\gamma-2s}C_{12,\boldsymbol{\alpha},h}(-s)= C_{12,-\boldsymbol{\gamma},h}(s)
\end{equation}
by symmetry. We must therefore check that 
\begin{equation}p^{h_p(\alpha+\gamma-2s)}\frac{C_{12,\boldsymbol{\alpha}}^{(i)}(-s)}{1-p^{-\alpha-\gamma+2s}}=\frac{C_{12,-\boldsymbol{\gamma}}^{(i)}(s)}{1-p^{\alpha+\gamma-2s}}=-\frac{C_{12,-\boldsymbol{\gamma}}^{(i)}(s)}{p^{\alpha+\gamma-2s}(1-p^{-\alpha-\gamma+2s})}
\end{equation}
for $i=0,1,2$, each of which can easily be verified by inspection.
\end{proof}

The formula for $U_{21}(s)$ shares a lot of similarities with that of $U_{12}(s)$. Following the method of Propostion \ref{U_12} we see that $U_{21}$ is given by the formal relation
\begin{equation}\begin{split}\label{U_21}U_{21,\alpha,\beta,\delta,\gamma,h,k}(s,\chi)=&\frac{\overline{\chi}(h)}{\overline{\chi}(k)}\Big(\sum_{m|h(q)/q}\frac{1}{m^{\beta+\delta+2s}}\Big)^{-1}\Big(\sum_{m|k(q)/q}\frac{1}{m^{\alpha+\gamma+2s}}\Big)
\\&\times U_{12,\beta,\alpha,\delta,\gamma,h,k}(s,\overline{\chi}).
\end{split}\end{equation}
By inspection of the Euler products we see that this relation implies $C_{21,\boldsymbol{\alpha},h,k}(s,\chi)=C_{12,\boldsymbol{\alpha},k,h}(s,\overline{\chi})$ and therefore by swapping $h$ and $k$ in (\ref{C_12 functional}) we have
\begin{prop}\label{C_21 functional prop} 
\begin{equation}\begin{split}\label{C_21 functional}&\frac{1}{q^{\beta+\gamma-s}}\Big(\sum_{m|k(q)/q}\frac{1}{m^{\alpha+\gamma-2s}}\Big)h^\beta k^\gamma (hk)^{-s} C_{21,\boldsymbol{\alpha},h,k}(-s)
\\=&\frac{1}{q^{\beta+\delta}}\frac{1}{q^{-\alpha-\delta+s}}\Big(\sum_{m|k(q)/q}\frac{1}{m^{-\alpha-\gamma+2s}}\Big)h^{-\delta}k^{-\alpha} (hk)^{s} C_{21,-\boldsymbol{\gamma},h,k}(s).
\end{split}\end{equation}
\end{prop}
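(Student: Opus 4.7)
The strategy is to derive (\ref{C_21 functional}) as a direct consequence of Proposition \ref{C_12 functional prop} combined with the symmetry between $C_{12}$ and $C_{21}$ announced in the paragraph immediately preceding the statement. Since the analogous functional equation for $C_{12}$ has already been proved by explicit inspection of the local factors $C_{12}^{(i)}(s)$ at each prime dividing $h$, the content of the present proposition is essentially formal.

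First I would establish the identification $C_{21,\boldsymbol{\alpha},h,k}(s,\chi) = C_{12,\boldsymbol{\alpha},k,h}(s,\overline{\chi})$ rigorously. This is read off from the formal relation (\ref{U_21}): by applying the method of Proposition \ref{U_12} to $U_{21}$ (which requires only minor modifications since $q \mid d$ for $d \in P_{21}$ by Lemma \ref{P_ij lem}, so the Ramanujan-type sum $c_d(r,\overline{\chi})$ again factors via (\ref{c_d L-function})), one extracts the $L$-function factors and a sum over divisors of $k(q)/q$ plus a finite Euler product. Comparing this Euler product prime-by-prime with the Euler product of $C_{12,\boldsymbol{\alpha},k,h}(s,\overline{\chi})$ from Proposition \ref{U_12} yields the identification.

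Next I would substitute $h \leftrightarrow k$ (and correspondingly $\chi \leftrightarrow \overline{\chi}$) in (\ref{C_12 functional}). Since $\overline{\chi}$ has the same conductor $q$, every $q$-power and every sum over divisors of the $q$-part transforms cleanly: $\sum_{m \mid h(q)/q}$ becomes $\sum_{m \mid k(q)/q}$, the monomials $h^{\alpha}k^{\delta}$ and $h^{-\gamma}k^{-\beta}$ are exchanged, and the $q$-exponents on the two sides are carried to those appearing in (\ref{C_21 functional}) through the shift relabelling inherent in the $C_{21} \leftrightarrow C_{12}$ correspondence. Replacing $C_{12,\boldsymbol{\alpha},k,h}(\pm s)$ by $C_{21,\boldsymbol{\alpha},h,k}(\pm s)$ in the swapped equation produces (\ref{C_21 functional}) exactly.

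The principal obstacle is bookkeeping rather than mathematical: one must track carefully which of the shift parameters $\alpha,\beta,\gamma,\delta$ is associated with which index after the swap and character conjugation, so that the $q$-exponents $q^{\alpha+\delta-s}$ and $q^{-\beta-\gamma+s}$ of (\ref{C_12 functional}) become $q^{\beta+\gamma-s}$ and $q^{-\alpha-\delta+s}$ in (\ref{C_21 functional}), and that the prefactor $1/q^{\beta+\delta}$ on the right-hand side is preserved. Once the shift dictionary given by (\ref{U_21}) is set down, this is an entirely mechanical verification that introduces no new ideas beyond those used to establish Proposition \ref{C_12 functional prop}.
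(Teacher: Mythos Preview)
Your proposal is correct and follows exactly the approach taken in the paper: establish the identification $C_{21,\boldsymbol{\alpha},h,k}(s,\chi)=C_{12,\boldsymbol{\alpha},k,h}(s,\overline{\chi})$ from the formal relation (\ref{U_21}), then swap $h\leftrightarrow k$ in Proposition~\ref{C_12 functional prop} and rearrange. The paper in fact gives no separate proof of this proposition beyond the single sentence preceding it, so your write-up is already more detailed than the original; your remark that the only obstacle is shift bookkeeping (and that it is purely mechanical) is accurate.
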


\section{Application of the Sum Formulae}\label{section 7}
\subsection{The Cases $i=j$}
Applying Proposition \ref{U_11} to (\ref{Psi_11}) we get 
\begin{align}
I^{(1)}_{11,\boldsymbol{\alpha}}\nonumber=&\frac{1}{h^{1/2-\alpha}k^{1/2-\gamma}}\frac{L_{\alpha,\beta}(\chi)L_{\gamma,\delta}(\overline{\chi})}{\zeta(2-\alpha+\beta-\gamma+\delta)}
\frac{1}{2\pi i}\int_{(1)}\frac{G(s)}{s}\left(\frac{hkq}{\pi^2}\right)^{s}\nonumber
\\& \times\Gamma(\alpha+\gamma+2s)2\cos\left(\frac{\pi}{2}(\alpha+\gamma+2s)\right)\zeta(\alpha+\gamma+2s)\zeta(1+\beta+\delta+2s)\nonumber
\\&\times Q_{11}(s) C_{11,\boldsymbol{\alpha},h,k}(s)\nonumber
 \int_{-\infty}^\infty {t^{-\alpha-\gamma-2s}}g(s,t)w(t) \left(1+O\left(\frac{1+|s|^2}{t}\right)\right)dtds\nonumber
\\=&\frac{1}{\sqrt{hk}}\frac{L_{\alpha,\beta}(\chi)L_{\gamma,\delta}(\overline{\chi})}{\zeta(2-\alpha+\beta-\gamma+\delta)}
\frac{1}{2\pi i}\int_{(1)}\frac{G(s)}{s}q^{s}\nonumber
\\& \times\zeta(1-\alpha-\gamma-2s)\zeta(1+\beta+\delta+2s) Q_{11}(s)h^\alpha k^\gamma (hk)^s C_{11,\boldsymbol{\alpha},h,k}(s)\nonumber
\\&\times
 \int_{-\infty}^\infty {\left(\frac{t}{2\pi}\right)^{-\alpha-\gamma}}\left(\frac{t}{2}\right)^{-2s}g(s,t)w(t) \left(1+O\left(\frac{1+|s|^2}{t}\right)\right)dtds
\end{align}
where we have used the functional equation for the Riemann zeta function
\begin{equation}\begin{split}&\pi^{-2s}\zeta(\alpha+\gamma+2s)\Gamma(\alpha+\gamma+2s)2\cos\left(\frac{\pi}{2}(\alpha+\gamma+2s)\right)
\\=&\pi^{\alpha+\gamma}2^{\alpha+\gamma+2s}\zeta(1-\alpha-\gamma-2s).
\end{split}\end{equation}
Moving the $s$-line of integration back to $\epsilon$ and using the properties of $w$ along with Stirling's approximation for $g(s,t)$ we get
\begin{align}
I^{(1)}_{11,\boldsymbol{\alpha}}=&\frac{1}{\sqrt{hk}}\frac{L_{\alpha,\beta}(\chi)L_{\gamma,\delta}(\overline{\chi})}{\zeta(2-\alpha+\beta-\gamma+\delta)}
 \int_{-\infty}^\infty {\left(\frac{t}{2\pi}\right)^{-\alpha-\gamma}}w(t)\frac{1}{2\pi i}\int_{(\epsilon)}\frac{G(s)}{s}q^{s}\nonumber
\\& \times\zeta(1-\alpha-\gamma-2s)\zeta(1+\beta+\delta+2s) Q_{11}(s)h^\alpha k^\gamma (hk)^s C_{11,\boldsymbol{\alpha},h,k}(s)dsdt\nonumber
\\&+O\left(\frac{1}{\sqrt{hk}}|L(1,\chi)|^2(hkqT)^\epsilon\right).
\end{align}
We note that this error term is of a lower order than $E(T)$. For $i=j=2$ the same process used in conjunction with Proposition \ref{U_22} gives 
\begin{align}
I^{(1)}_{22,\boldsymbol{\alpha}}=&\frac{1}{\sqrt{hk}}\frac{L_{-\gamma,-\delta}(\chi)L_{-\alpha,-\beta}(\overline{\chi})}{\zeta(2+\alpha-\beta+\gamma-\delta)}
 \int_{-\infty}^\infty \frac{1}{q^{\beta+\delta}}{\left(\frac{t}{2\pi}\right)^{-\beta-\delta}}w(t)\frac{1}{2\pi i}\int_{(\epsilon)}\frac{G(s)}{s}q^{-s}\nonumber
\\& \times\zeta(1-\beta-\delta-2s)\zeta(1+\alpha+\gamma+2s) Q_{22}(s)h^\beta k^\delta (hk)^s C_{22,\boldsymbol{\alpha},h,k}(s)dsdt\nonumber
\\&+O\left(\frac{1}{\sqrt{hk}}|L(1,\chi)|^2(hkqT)^\epsilon\right).
\end{align}
As usual, the formulas for $I^{(2)}_{11,\boldsymbol{\alpha}}$ and $I^{(2)}_{22,\boldsymbol{\alpha}}$ can be acquired by 
performing the substitutions $\alpha\leftrightarrow-\gamma$, $\beta\leftrightarrow-\delta$ and multiplying by $X_{\boldsymbol{\alpha},t}\sim q^{-\beta-\delta}(t/2\pi)^{-\alpha-\beta-\gamma-\delta}$ in the integrals over $t$.
With this we have enough information to compute the main terms of the off-diagonals.

\begin{prop}\label{off diagonal main terms prop}Let $A_{\alpha,\beta,\gamma,\delta,q}(s)$ be given by formula (\ref{A}). Then
\begin{equation}\begin{split}\label{off diagonal main terms}&I^{(1)}_{11,\boldsymbol{\alpha}}+I^{(2)}_{22,\boldsymbol{\alpha}}+I^{(1)}_{22,\boldsymbol{\alpha}}+I^{(2)}_{11,\boldsymbol{\alpha}}
\\=&\frac{1}{\sqrt{hk}}\int_{-\infty}^\infty w(t)\left(\frac{t}{2\pi}\right)^{-\alpha-\gamma}A_{-\gamma,\beta,-\alpha,\delta,q}(0)h^\alpha k^\gamma C_{11,\boldsymbol{\alpha},h,k}(0)dt
\\&+\frac{1}{q^{\beta+\delta}}\frac{1}{\sqrt{hk}}\int_{-\infty}^\infty w(t)\left(\frac{t}{2\pi}\right)^{-\beta-\delta}A_{\alpha,-\delta,\gamma,-\beta,q}(0)h^\beta k^\delta C_{22,\boldsymbol{\alpha},h,k}(0)dt
\\&-R\left(\scriptstyle\frac{-\alpha-\gamma}{2}\right)+R\left(\scriptstyle\frac{-\beta-\delta}{2}\right)
+R^\prime\left(\scriptstyle\frac{-\alpha-\gamma}{2}\right)-R^\prime\left(\scriptstyle\frac{-\beta-\delta}{2}\right)+E(T)
\end{split}\end{equation}
where 
\begin{multline}\label{R}R(b)=\frac{1}{2}\frac{q^b}{\sqrt{hk}}\frac{L_{\alpha,\beta}(\chi)L_{\gamma,\delta}(\overline{\chi})}{\zeta(2-\alpha+\beta-\gamma+\delta)}\zeta(1-\alpha+\beta-\gamma+\delta)
\\\times \int_{-\infty}^\infty w(t)\left(\frac{t}{2\pi}\right)^{-\alpha-\gamma}\frac{G\left(b\right)}{b} h^\alpha k^\gamma\left(hk\right)^{b}Q_{11}\left(b\right)C_{11,\boldsymbol{\alpha},h,k}\left(b\right)dt
\end{multline}
and 
\begin{multline}\label{R^prime}R^\prime(b)=\frac{1}{2}\frac{q^{-b-\beta-\delta}}{\sqrt{hk}}\frac{L_{-\gamma,-\delta}(\chi)L_{-\alpha,-\beta}(\overline{\chi})}{\zeta(2+\alpha-\beta+\gamma-\delta)}\zeta(1+\alpha-\beta+\gamma-\delta)
\\\times \int_{-\infty}^\infty w(t)\left(\frac{t}{2\pi}\right)^{-\beta-\delta}\frac{G\left(b\right)}{b} h^\beta k^\delta\left(hk\right)^{b} Q_{22}(b)C_{22,\boldsymbol{\alpha},h,k}\left(b\right)dt.
\end{multline}
\end{prop}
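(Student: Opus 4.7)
The plan is to pair the four integrals as $(I^{(1)}_{11,\boldsymbol{\alpha}},I^{(2)}_{22,\boldsymbol{\alpha}})$ and $(I^{(1)}_{22,\boldsymbol{\alpha}},I^{(2)}_{11,\boldsymbol{\alpha}})$. Within each pair the substitution $\alpha\!\leftrightarrow\!-\gamma,\beta\!\leftrightarrow\!-\delta$, combined with the leading-order Stirling expansion (\ref{X asymp}) of $X_{\boldsymbol{\alpha},t}$, ensures that the two partners share a common outside prefactor; the two $(t/2\pi)$-powers and the $q^{\pm(\beta+\delta)}$ factors in $I^{(2)}_{22,\boldsymbol{\alpha}}$ collapse against those coming from the swap of $I^{(1)}_{22,\boldsymbol{\alpha}}$ to give precisely the prefactor of $I^{(1)}_{11,\boldsymbol{\alpha}}$. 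Writing $F(s)$ for the $s$-integrand of $I^{(1)}_{11,\boldsymbol{\alpha}}$ and $g(s)$ for the reduced $s$-integrand of $I^{(2)}_{22,\boldsymbol{\alpha}}$, a direct manipulation using Proposition \ref{C_11 functional prop} and $G(-s)=G(s)$ gives the clean identity $F(-s)=-g(s)$; the analogous identity for the other pair uses (\ref{C_11 functional 2}).

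I would then shift the $s$-contour in each of the four integrals from $(\epsilon)$ leftward to a line $(-c)$ with $c$ large enough that the poles at $s=0$, $s=-(\alpha+\gamma)/2$, $s=-(\beta+\delta)/2$ of $F$, and at $s=0$, $s=(\alpha+\gamma)/2$, $s=(\beta+\delta)/2$ of $g$, all lie in the strip $(-c,\epsilon)$; for shifts $\ll(\log T)^{-1}$ this is achieved by any fixed $c>\epsilon>0$. The residue of $F$ at $s=0$ produces, after invoking (\ref{C_11 functional 1}) at $s=0$ to identify $h^\alpha k^\gamma C_{11,\boldsymbol{\alpha},h,k}(0)$ with $h^{-\delta}k^{-\beta}C_{22,-\boldsymbol{\gamma},h,k}(0)$, the first main term of (\ref{off diagonal main terms}); the $s=0$ residue of $g$ gives the same term. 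The residues of $F$ at $s=-(\alpha+\gamma)/2$ and $s=-(\beta+\delta)/2$ evaluate to $-R(-(\alpha+\gamma)/2)$ and $+R(-(\beta+\delta)/2)$ respectively, and a short calculation using (\ref{C_11 functional 1}) shows the mirror residues of $g$ at $s=(\alpha+\gamma)/2,(\beta+\delta)/2$ agree. The remaining contour integral on $(-c)$ is disposed of via the identity $F(-s)=-g(s)$: the substitution $s\mapsto -s$ turns $\int_{(-c)}F\,ds$ into $-\int_{(c)}g\,du$, and since for small shifts no poles of $g$ lie in the strip $(\epsilon,c)$, this equals $-\int_{(\epsilon)}g\,du$, i.e.\ minus the $s$-integral of $I^{(2)}_{22,\boldsymbol{\alpha}}$. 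Solving the resulting linear relation yields
\begin{equation*}
I^{(1)}_{11,\boldsymbol{\alpha}}+I^{(2)}_{22,\boldsymbol{\alpha}} \,=\, M_1 - R\!\left(\tfrac{-\alpha-\gamma}{2}\right) + R\!\left(\tfrac{-\beta-\delta}{2}\right) + (\text{error}),
\end{equation*}
where $M_1$ is the first main term on the right of (\ref{off diagonal main terms}); a fully parallel calculation on the second pair, using instead Proposition \ref{U_22} and (\ref{C_11 functional 2}), yields the counterpart identity involving $M_2$ and the $R'$-residues with the signs $+R'(-(\alpha+\gamma)/2)-R'(-(\beta+\delta)/2)$. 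Adding the two gives the proposition.

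The error $E(T)$ is already present as the delta-method error carried through from Proposition \ref{I_O prop}; the Stirling error $O(T^\epsilon/\sqrt{hk})$ in the formulas for $I^{(1)}_{11,\boldsymbol{\alpha}},I^{(1)}_{22,\boldsymbol{\alpha}}$ and the $O(1/t)$ error in replacing $X_{\boldsymbol{\alpha},t}$ by its leading term are of strictly lower order and are absorbed. The principal technical obstacle is the sign book-keeping for the mirror residues: verifying $\mathrm{Res}_{s=(\alpha+\gamma)/2}g(s) = \mathrm{Res}_{s=-(\alpha+\gamma)/2}F(s)$ requires tracking the sign flip from $\zeta$'s residue formula (factor $-\tfrac12$ on the $F$-side versus $+\tfrac12$ on the $g$-side), which is compensated by the sign flip $G(-s)/(-s)=-G(s)/s$ combined with the reciprocal use of (\ref{C_11 functional 1}). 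Uniformity in the small shift parameters — ensuring that all four mirror poles lie in $(-\epsilon,\epsilon)$ and that the contour from $(\epsilon)$ to $(c)$ crosses no poles of $g$ — rests squarely on the hypothesis $|\alpha|,|\beta|,|\gamma|,|\delta|\ll(\log T)^{-1}$ from Theorem \ref{main thm}.
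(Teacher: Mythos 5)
Your proposal is correct and follows essentially the same route as the paper: pair $I^{(1)}_{11}$ with $I^{(2)}_{22}$ (and $I^{(1)}_{22}$ with $I^{(2)}_{11}$), shift the $s$-contour of the first member of each pair to the left, collect the residues at $s=0$, $s=-(\alpha+\gamma)/2$, $s=-(\beta+\delta)/2$ (giving the main term and the $\pm R$, $\pm R'$ terms respectively), and use the reflection identity $F(-s)=-g(s)$ — built from Proposition \ref{C_11 functional prop} and the evenness of $G$ — to cancel the shifted line integral against the second member of the pair. The only stylistic deviations are that you shift to a line $(-c)$ rather than $(-\epsilon)$ (making the extra, but harmless, observation that $g$ is holomorphic in the intervening strip), and the remark about computing the residues of $g$ separately is redundant once you observe the remaining integral equals $-\int_{(\epsilon)}g$ directly.
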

\begin{proof}We first shift the contour of $I_{11,\boldsymbol{\alpha}}^{(1)}$ to $-\varepsilon$. We encounter poles at $s=-(\alpha+\gamma)/2$ and $s=-(\beta+\delta)/2$ due to the zeta factors and we also encounter a pole at $s=0$. The poles at $s=-(\alpha+\gamma)/2$ and $s=-(\beta+\delta)/2$ give rise to the terms $-R\left(\scriptstyle\frac{-\alpha-\gamma}{2}\right)$ and $R\left(\scriptstyle\frac{-\beta-\delta}{2}\right)$ respectively whilst the pole at zero gives the residue
\begin{align}&\frac{L(1-\alpha+\beta,\chi)L(1-\gamma+\delta,\overline{\chi})\zeta(1-\alpha-\gamma)\zeta(1+\beta+\delta)}{\zeta(2-\alpha+\beta-\gamma+\delta)}\nonumber
\\&\times\prod_{p|q}\left(\frac{1-p^{-1-\beta-\delta}}{1-p^{-2+\alpha-\beta+\gamma-\delta}}\right)\frac{1}{\sqrt{hk}}\int_{-\infty}^\infty w(t)\left(\frac{t}{2\pi}\right)^{-\alpha-\gamma} G(0)h^\alpha k^\gamma C_{11,\boldsymbol{\alpha},h,k}(0)dt\nonumber
\\=&\frac{1}{\sqrt{hk}}\int_{-\infty}^\infty w(t)\left(\frac{t}{2\pi}\right)^{-\alpha-\gamma}A_{-\gamma,\beta,-\alpha,\delta,q}(0)h^\alpha k^\gamma C_{11,\boldsymbol{\alpha},h,k}(0)dt\nonumber
\end{align}
In the integral on the new line we make the substitution $s\mapsto -s$. Applying the functional equation (\ref{C_11 functional 1}) we see this new integral cancels with $I^{(2)}_{22,\boldsymbol{\alpha}}$ (recall $G(s)$ is even). 
Using the same process on $I^{(1)}_{22,\boldsymbol{\alpha}}$ along with the functional equation (\ref{C_11 functional 2}) we get the remaining terms. 
\end{proof}

\subsection{The Cases $i\neq j$} Using the functional equation
\begin{equation}\begin{split}&\pi^{-2s}\Gamma(a_i+b_j+2s)2i^\mathfrak{a}\cos\left(\frac{\pi}{2}(a_i+b_j+2s+\mathfrak{a})\right)L(a_i+b_j+2s,\overline{\chi})
\\=&\pi^{a_i+b_j}\left(\frac{2}{q}\right)^{a_i+b_j+2s}\overline{G(\chi)}L(1-a_i-b_j-2s,\chi)
\end{split}\end{equation}
and the same procedure as above we get
\begin{align}
I^{(1)}_{12,\boldsymbol{\alpha}}=&\frac{\chi(k)\overline{G(\chi)}}{\sqrt{hk}}\frac{L_{\alpha,\beta}(\chi)L_{-\gamma,-\delta}(\chi)}{L(2-\alpha+\beta+\gamma-\delta,\chi^2)}
 \int_{-\infty}^\infty {\left(\frac{t}{2\pi}\right)^{-\alpha-\delta}}w(t)\frac{1}{2\pi i}\int_{(\epsilon)}\frac{G(s)}{s}\nonumber
\\& \times q^{-\alpha-\delta-s} \Bigg(\sum_{m|h(q)/q}\frac{1}{m^{\alpha+\gamma+2s}}\Bigg) L(1-\alpha-\delta-2s,\chi)L(1+\beta+\gamma+2s,\chi)\nonumber
\\&\times h^\alpha k^\delta (hk)^s C_{12,\boldsymbol{\alpha},h,k}(s)dsdt+O\left(\frac{1}{\sqrt{hk}}|L(1,\chi)|^2(hkqT)^\epsilon\right)
\end{align}
and
\begin{align}
I^{(1)}_{21,\boldsymbol{\alpha}}=&\frac{\overline{\chi}(h)\overline{G(\overline{\chi})}}{\sqrt{hk}}\frac{L_{-\alpha,-\beta}(\overline{\chi})L_{\gamma,\delta}(\overline{\chi})}{L(2+\alpha-\beta-\gamma+\delta,\chi^2)}
 \int_{-\infty}^\infty {\left(\frac{t}{2\pi}\right)^{-\beta-\gamma}}w(t)\frac{1}{2\pi i}\int_{(\epsilon)}\frac{G(s)}{s}\nonumber
\\& \times q^{-\beta-\gamma-s} \Bigg(\sum_{m|k(q)/q}\frac{1}{m^{\alpha+\gamma+2s}}\Bigg) L(1-\beta-\gamma-2s,\overline{\chi})L(1+\alpha+\delta+2s,\overline{\chi})\nonumber
\\&\times h^\beta k^\gamma (hk)^s C_{21,\boldsymbol{\alpha},h,k}(s)dsdt+O\left(\frac{1}{\sqrt{hk}}|L(1,\chi)|^2(hkqT)^\epsilon\right).
\end{align}
By using the functional equations (\ref{C_12 functional}), (\ref{C_21 functional}) and a similar method to that employed in Proposition \ref{off diagonal main terms prop} we get
\begin{prop}\label{off diagonal lower order terms}Let $A_{\alpha,\beta,\gamma,\delta}^\prime (s,\chi)$  be given by formula (\ref{A prime}) and let 
\begin{equation}\label{M_alpha}M_{\alpha,\gamma,h}(s)=\sum_{m|h(q)/q}\frac{1}{m^{\alpha+\gamma+2s}}.
\end{equation}
Then
\begin{equation}\begin{split}&I^{(1)}_{12,\boldsymbol{\alpha}}+I^{(2)}_{12,\boldsymbol{\alpha}}+I^{(1)}_{21,\boldsymbol{\alpha}}+I^{(2)}_{21,\boldsymbol{\alpha}}
\\=&\frac{{\bf 1}_{q|h}\chi(k)\overline{G(\chi)}}{q^{\alpha+\delta}\sqrt{hk}}\int_{-\infty}^\infty w(t)\left(\frac{t}{2\pi}\right)^{-\alpha-\delta}A_{-\delta,\beta,\gamma,-\alpha}^\prime(0,\chi)M_{\alpha,\gamma,h}(0)
\\&\times h^\alpha k^\delta C_{12,\boldsymbol{\alpha},h,k}(0)dt
\\&+\frac{{\bf 1}_{q|k}\overline{\chi}(h)\overline{G(\overline{\chi})}}{q^{\beta+\gamma}\sqrt{hk}}\int_{-\infty}^\infty w(t)\left(\frac{t}{2\pi}\right)^{-\beta-\gamma}A_{\alpha,-\gamma,-\beta,\delta}^\prime(0,\overline{\chi})M_{\alpha,\gamma,k}(0)
\\&\times h^\beta k^\gamma C_{21,\boldsymbol{\alpha},h,k}(0)dt
+E(T)
\end{split}\end{equation}
\end{prop}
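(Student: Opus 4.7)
The plan is to mirror the proof of Proposition \ref{off diagonal main terms prop}, exploiting a crucial simplification that occurs when $i \neq j$: the set $P_{ij}$ is non-empty only when $q$ divides $h$ or $k$ respectively, forcing $q > 1$, so that $\chi$ is primitive nontrivial. Consequently the $L$-functions appearing in the Mellin integrands for $I^{(1)}_{12,\boldsymbol{\alpha}}$ and $I^{(1)}_{21,\boldsymbol{\alpha}}$ are entire, so shifting the $s$-contour from $(\varepsilon)$ to $(-\varepsilon)$ crosses only the simple pole at $s=0$ coming from $G(s)/s$. This is why no $R$-type residue terms appear in the final formula.

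For the residue at $s=0$, one inserts $G(0)=1$ and $M_{\alpha,\gamma,h}(0)$, producing the $L$-ratio
\[
\frac{L_{\alpha,\beta}(\chi)L_{-\gamma,-\delta}(\chi)L(1-\alpha-\delta,\chi)L(1+\beta+\gamma,\chi)}{L(2-\alpha+\beta+\gamma-\delta,\chi^2)}.
\]
A direct factor-by-factor comparison against (\ref{A prime}) shows this equals $A^\prime_{-\delta,\beta,\gamma,-\alpha}(0,\chi)$, so combined with the prefactor $\chi(k)\overline{G(\chi)}q^{-\alpha-\delta}/\sqrt{hk}$ and the $t$-factor $(t/2\pi)^{-\alpha-\delta}$, the residue reproduces the first main term of the proposition. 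The residue computation for $I^{(1)}_{21,\boldsymbol{\alpha}}$ is entirely symmetric and yields the second main term involving $A^\prime_{\alpha,-\gamma,-\beta,\delta}(0,\overline{\chi})$.

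To dispose of the residual integral on the shifted line $(-\varepsilon)$, I substitute $s \mapsto -s$ to return the contour to $(\varepsilon)$. Since $G$ is even while $1/s \mapsto -1/s$, this introduces an overall sign. Applying Proposition \ref{C_12 functional prop} then converts the combination $q^{-\alpha-\delta+s}M_{\alpha,\gamma,h}(-s)h^{\alpha}k^{\delta}(hk)^{-s}C_{12,\boldsymbol{\alpha},h,k}(-s)$ into
\[
\frac{1}{q^{\beta+\delta}}\,q^{\beta+\gamma-s}M_{-\gamma,-\alpha,h}(s)h^{-\gamma}k^{-\beta}(hk)^{s}C_{12,-\boldsymbol{\gamma},h,k}(s),
\]
which is exactly the integrand of $I^{(2)}_{12,\boldsymbol{\alpha}}$ after one recalls that $I^{(2)}$ arises from $I^{(1)}$ via $\alpha \leftrightarrow -\gamma$, $\beta \leftrightarrow -\delta$ together with the insertion of $X_{\boldsymbol{\alpha},t} \sim q^{-\beta-\delta}(t/2\pi)^{-\alpha-\beta-\gamma-\delta}$ (this last factor accounts for the extra $q^{-\beta-\delta}$ and restores the $t$-exponent $-\alpha-\delta$ after the substitutions). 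The sign picked up during the contour substitution makes the shifted integral cancel $I^{(2)}_{12,\boldsymbol{\alpha}}$ exactly, and Proposition \ref{C_21 functional prop} dispatches the pair $I^{(1)}_{21,\boldsymbol{\alpha}} + I^{(2)}_{21,\boldsymbol{\alpha}}$ by the same mechanism.

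The main obstacle is the purely mechanical bookkeeping of powers of $q$, $h$, $k$, the scaling $(hk)^s$, and the divisor sum $M_{\alpha,\gamma,h}$, all of which must transform correctly under $s \mapsto -s$ and the functional equation in order to match the pattern defining $I^{(2)}$. Propositions \ref{C_12 functional prop} and \ref{C_21 functional prop} were designed precisely to effect this cancellation, so once the powers and signs are aligned the cancellation is automatic. The approximation errors of size $O(t^{-1})$ arising from replacing $X_{\boldsymbol{\alpha},t}$ by its leading-order Stirling approximation and from the $1+O((1+|s|^2)/t)$ factor in the $t$-integrand contribute at most $O(T^\varepsilon/\sqrt{hk}\,|L(1,\chi)|^2)$, which is comfortably absorbed into $E(T)$.
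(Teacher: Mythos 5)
Your proof is correct and follows the same approach as the paper, which states only that one should apply the functional equations (\ref{C_12 functional}), (\ref{C_21 functional}) together with the contour-shift argument of Proposition~\ref{off diagonal main terms prop}. Your additional observation -- that the Dirichlet $L$-functions $L(\cdot,\chi)$ in the $i\neq j$ integrands are entire, so that shifting to $(-\varepsilon)$ picks up only the $G(s)/s$ pole at $s=0$ and no $R$-type residue terms appear -- correctly explains the structural difference from the $i=j$ case, and your factor-by-factor identification of the residue with $A^\prime_{-\delta,\beta,\gamma,-\alpha}(0,\chi)$ and the cancellation of the shifted integral against $I^{(2)}_{12,\boldsymbol{\alpha}}$ via Proposition~\ref{C_12 functional prop} are both accurate.
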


We are now almost in a position to prove Proposition \ref{off diagonals prop}. The goal of the remaining sections is to relate the functions $C_{ii,\boldsymbol{\alpha},h,k}$ to $B_{\boldsymbol{\alpha},h,k}$ and $C_{ij,\boldsymbol{\alpha},h,k}$  to $B^\prime_{\boldsymbol{\alpha},h,k}$ (for $i\neq j$). We will then write the main terms of Propositions \ref{off diagonal main terms prop} and \ref{off diagonal lower order terms} in terms of $Z_{\boldsymbol{\alpha},h,k}$  and $Z^\prime_{\boldsymbol{\alpha},h,k}$ respectively and we will show that the residue terms of Proposition \ref{off diagonal main terms prop} cancel with those of Proposition \ref{diagonals prop}.

\section{ The Functions $B_{\alpha,h,k}(s,\chi)$, $B^\prime_{\alpha,h,k}(s,\chi)$}\label{section 8}

We first recall the formula for $B$;
\begin{equation}\begin{split}B_{\boldsymbol{\alpha},h,k}(s,\chi)=&B_{\alpha,\beta,\gamma,\delta,h}(s,\overline{\chi})B_{\gamma,\delta,\alpha,\beta,k}(s,\chi)
\end{split}\end{equation}
where
\begin{equation}B_{\alpha,\beta,\gamma,\delta,h}(s,\overline{\chi})=\left(\prod_{p|h}\frac{\sum_{j\geq 0}f_{\alpha,\beta}(p^{j},\chi)f_{\gamma,\delta}(p^{h_p+j},\overline{\chi})p^{-j(1+s)}}{\sum_{j\geq 0}f_{\alpha,\beta}(p^{j},\chi)f_{\gamma,\delta}(p^{j},\overline{\chi})p^{-j(1+s)}}\right)\end{equation}
\begin{prop}\label{B prop}We have
\begin{equation}\label{21}B_{\alpha,\beta,\gamma,\delta,h}(s,\overline{\chi})=\prod_{p|h}\left(\frac{B^{(0)}(s)-p^{-1}B^{(1)}(s)+p^{-2}B^{(2)}(s)}{(p^{-\gamma}-\overline{\chi}(p)p^{-\delta})(1-|\chi(p)|^2p^{-2-\alpha-\beta-\gamma-\delta-2s})}\right)
\end{equation}
where
\begin{align}\label{B^(i)}B^{(0)}(s)=&p^{-\gamma(h_p+1)}-\overline{\chi}(p)^{h_p+1}p^{-\delta(h_p+1)},
\\B^{(1)}(s)=&\overline{\chi}(p)p^{-\gamma-\delta}(p^{-\alpha}+\chi(p)p^{-\beta})(p^{-\gamma h_p}-\overline{\chi}(p)^{h_p}p^{-\delta h_p})p^{-s},
\\B^{(2)}(s)=&|\chi(p)|^2p^{-\alpha-\beta-\gamma-\delta}(\overline{\chi}(p)p^{-\delta-\gamma h_p}-\overline{\chi}(p)^{h_p}p^{-\gamma-\delta h_p})p^{-2s}
\end{align}
\end{prop}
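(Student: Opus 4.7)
The plan is to compute the $p$-th numerator and denominator sums appearing in the definition of $B_{\alpha,\beta,\gamma,\delta,h}(s,\overline{\chi})$ in closed form, then compare. It is convenient to introduce the variables
\[
x=p^{-\alpha},\quad y=\chi(p)p^{-\beta},\quad z=p^{-\gamma},\quad w=\overline{\chi}(p)p^{-\delta},\quad u=p^{-1-s},
\]
and to observe the Chebyshev-type identity
\[
f_{\alpha,\beta}(p^{j},\chi)=\frac{x^{j+1}-y^{j+1}}{x-y},\qquad f_{\gamma,\delta}(p^{h_{p}+j},\overline{\chi})=\frac{z^{h_{p}+j+1}-w^{h_{p}+j+1}}{z-w},
\]
read as formal polynomial identities so that the degenerate cases ($x=y$, $\chi(p)=0$, etc.) cause no trouble.

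Substituting these into the two sums reduces each to a linear combination of four geometric series. The denominator sum collapses to the Euler factor
\[
\sum_{j\geq0}f_{\alpha,\beta}(p^{j},\chi)f_{\gamma,\delta}(p^{j},\overline{\chi})u^{j}=\frac{1-|\chi(p)|^{2}p^{-2-2s-\alpha-\beta-\gamma-\delta}}{(1-xzu)(1-xwu)(1-yzu)(1-ywu)},
\]
which is precisely the local factor that was already computed in the Euler product of Proposition \ref{diagonals prop}. The numerator sum has the same quartic denominator after clearing common factors, but with $z^{h_{p}+1},w^{h_{p}+1}$ in place of $z,w$ in each of the four summand numerators. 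Dividing the two cancels the four factors $(1-xzu),(1-xwu),(1-yzu),(1-ywu)$ and leaves an expression of the form
\[
\frac{T(u;h_p)}{(x-y)(z-w)\bigl(1-xyzwu^{2}\bigr)},
\]
where $T(u;h_{p})$ is a polynomial in $u$ of degree at most three.

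The remaining task is pure bookkeeping: collect $T(u;h_{p})$ by powers of $u$ and check that each coefficient is divisible by $(x-y)$, with quotients matching $B^{(0)}(s)$, $-p^{-1}B^{(1)}(s)$, $p^{-2}B^{(2)}(s)$ from (\ref{B^(i)}) after the substitution $p^{-s}=pu$. A patient expansion gives
\begin{align*}
[u^{0}]T&=(x-y)\bigl(z^{h_{p}+1}-w^{h_{p}+1}\bigr),\\
[u^{1}]T&=-(x-y)(x+y)\,zw\bigl(z^{h_{p}}-w^{h_{p}}\bigr),\\
[u^{2}]T&=(x-y)\,xy\bigl(w^{2}z^{h_{p}+1}-z^{2}w^{h_{p}+1}\bigr),\\
[u^{3}]T&=0,
\end{align*}
from which the claimed formula follows on translating back to $\alpha,\beta,\gamma,\delta,\chi(p)$ and noting that $(z-w)(1-xyzwu^{2})$ reproduces the stated denominator $(p^{-\gamma}-\overline{\chi}(p)p^{-\delta})(1-|\chi(p)|^{2}p^{-2-\alpha-\beta-\gamma-\delta-2s})$. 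Taking the product over $p\mid h$ concludes.

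The only slightly delicate point in the plan is the identical vanishing of the $u^{3}$ coefficient and the divisibility of each $u^{j}$ coefficient by $(x-y)$. Both are forced by symmetry in the swaps $x\leftrightarrow y$ and $z\leftrightarrow w$ built into the four-term combinations, and by the fact that the quotient must be a \emph{finite} Euler factor (so no spurious poles can appear); verifying them amounts to a straightforward but lengthy expansion that is best organised by separating the four cross-terms produced by the product of two Chebyshev identities above.
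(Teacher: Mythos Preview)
Your proposal is correct and follows essentially the same route as the paper: both arguments use the geometric-series identity $f_{\alpha,\beta}(p^{j},\chi)=(x^{j+1}-y^{j+1})/(x-y)$ to reduce the $p$-local numerator and denominator sums to four geometric series, put everything over the common quartic denominator $(1-xzu)(1-xwu)(1-yzu)(1-ywu)$, and divide. The only cosmetic differences are that the paper keeps the original $p^{-\alpha},\chi(p)p^{-\beta},\dots$ notation throughout and recovers the denominator sum by specialising its numerator computation to $h_{p}=0$, whereas you work with the abbreviations $x,y,z,w,u$ and quote the denominator Euler factor from the proof of Proposition~\ref{diagonals prop}; the algebra is otherwise identical.
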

\begin{proof}We begin by computing
\[\sum_{j\geq 0}f_{\alpha,\beta}(p^{j},\chi)f_{\gamma,\delta}(p^{h_p+j},\overline{\chi})p^{-j(1+s)}.\]
We have
\begin{equation}\begin{split}f_{\alpha,\beta}(p^m,\chi)=&\sum_{n_1n_2=p^m}n_1^{-\alpha}\chi(n_2)n_2^{-\beta}
\\=&\sum_{0\leq j\leq m}p^{-\alpha(m-j)}\chi(p^j)p^{-\beta j}
\\=&\frac{p^{-\alpha(m+1)}-\chi(p)^{m+1}p^{-\beta(m+1)}}{p^{-\alpha}-\chi(p)p^{-\beta}}.
\end{split}\end{equation}
Therefore
\begin{equation*}\begin{split}&\sum_{j\geq 0}f_{\alpha,\beta}(p^{j},\chi)f_{\gamma,\delta}(p^{h_p+j},\overline{\chi})p^{-j(1+s)}
\\=&\sum_{j\geq 0}\frac{(p^{-\alpha(j+1)}-\chi(p)^{j+1}p^{-\beta(j+1)})(p^{-\gamma(h_p+j+1)}-\overline{\chi}(p)^{h_p+j+1}p^{-\delta(h_p+j+1)})}{(p^{-\alpha}-\chi(p)p^{-\beta})(p^{-\gamma}-\overline{\chi}(p)p^{-\delta})}p^{-j(s+1)}
\end{split}\end{equation*}
Expanding the numerator out and performing the summation we have
\begin{align*}&\sum_{j\geq 0}f_{\alpha,\beta}(p^{j},\chi)f_{\gamma,\delta}(p^{h_p+j},\overline{\chi})p^{-j(1+s)}
\\=&\bigg(\frac{p^{-\alpha-\gamma(h_p+1)}}{1-p^{-1-s-\alpha-\gamma}}-\frac{\chi(p)p^{-\beta-\gamma(h_p+1)}}{1-\chi(p)p^{-1-\beta-\gamma-s}}-\frac{\overline{\chi}(p)^{h_p+1}p^{-\alpha-\delta(h_p+1)}}{1-\overline{\chi}(p)p^{-1-\alpha-\delta-s}}
\\&+\frac{|\chi(p)|^2\overline{\chi}(p)^{h_p}p^{-\beta-\delta(h_p+1)}}{1-|\chi(p)|^2p^{-1-\beta-\delta-s}}\bigg)(p^{-\alpha}-\chi(p)p^{-\beta})^{-1}(p^{-\gamma}-\overline{\chi}(p)p^{-\delta})^{-1}\end{align*}
which simplifies to
\begin{align*}
&\bigg(\frac{p^{-\gamma(h_p+1)}}{(1-p^{-1-s-\alpha-\gamma})(1-\chi(p)p^{-1-\beta-\gamma-s})}
\\&-\frac{\overline{\chi}(p)^{h_p+1}p^{-\delta(h_p+1)}}{(1-\overline{\chi}(p)p^{-1-\alpha-\delta-s})(1-|\chi(p)|^2p^{-1-\beta-\delta-s})}\bigg)(p^{-\gamma}-\overline{\chi}(p)p^{-\delta})^{-1}
\\=&\left(B^{(0)}(s)-p^{-1}B^{(1)}(s)+p^{-2}B^{(2)}(s)\right)\bigg((p^{-\gamma}-\overline{\chi}(p)p^{-\delta})(1-p^{-1-s-\alpha-\gamma})
\\&\times(1-\chi(p)p^{-1-\beta-\gamma-s})(1-\overline{\chi}(p)p^{-1-\alpha-\delta-s})(1-|\chi(p)|^2p^{-1-\beta-\delta-s})\bigg)^{-1}.
\end{align*}
Setting $h_p=0$ and dividing the above by the resulting expression gives the result.
\end{proof}

Recalling the formula for $B^\prime_{\boldsymbol{\alpha},h,k}(s,\chi)$; 
\begin{equation}B^\prime_{\boldsymbol{\alpha},h,k}(s,\chi)=B^\prime_{\alpha,\beta,\gamma,\delta,h}(s,\chi)B^\prime_{\gamma,\delta,\alpha,\beta,k}(s,\chi)
\end{equation}
where
\begin{equation}B^\prime_{\alpha,\beta,\gamma,\delta,h}(s,\chi)=\prod_{p|h}\frac{\sum_{j\geq 0}\chi(p^j)\sigma_{\alpha,\beta}(p^{j})\sigma_{\gamma,\delta}(p^{h_p+j})p^{-j(1+s)}}{\sum_{j\geq 0}\chi(p^j)\sigma_{\alpha,\beta}(p^{j})\sigma_{\gamma,\delta}(p^{j})p^{-j(1+s)}}.
\end{equation}
Following the same method as for $B_{\boldsymbol{\alpha},h,k}(s,\chi)$ we get
\begin{prop}\label{B^prime prop}
\begin{equation}B^\prime_{\alpha,\beta,\gamma,\delta,h}(s,\chi)=\prod_{p|h}\left(\frac{B^{\prime\,(0)}(s)-p^{-1}B^{\prime\,(1)}(s)+p^{-2}B^{\prime\,(2)}(s)}{(p^{-\gamma}-p^{-\delta})(1-\chi(p)^2p^{-2-\alpha-\beta-\gamma-\delta-2s})}\right)
\end{equation}
where
\begin{eqnarray}\label{B^prime(i)}B^{\prime\,(0)}(s)&=&p^{-\gamma(h_p+1)}-p^{-\delta(h_p+1)},
\\B^{\prime\,(1)}(s)&=&\chi(p)p^{-\gamma-\delta}(p^{-\alpha}+p^{-\beta})(p^{-\gamma h_p}-p^{-\delta h_p})p^{-s},
\\B^{\prime\,(2)}(s)&=&\chi(p)^2p^{-\alpha-\beta-\gamma-\delta}(p^{-\delta-\gamma h_p}-p^{-\gamma-\delta h_p})p^{-2s}.
\end{eqnarray}
\end{prop}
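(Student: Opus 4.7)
The strategy is to mimic the proof of Proposition \ref{B prop} verbatim, replacing $f_{\alpha,\beta}(p^j,\chi)$ and $f_{\gamma,\delta}(p^{h_p+j},\overline{\chi})$ by the character-free divisor sums $\sigma_{\alpha,\beta}(p^j)$ and $\sigma_{\gamma,\delta}(p^{h_p+j})$, and accounting for the explicit factor $\chi(p^j)$ which now sits in the summand rather than inside the divisor functions. The key input is the elementary closed form
\begin{equation*}
\sigma_{\alpha,\beta}(p^m)=\sum_{i=0}^m p^{-\alpha(m-i)-\beta i}=\frac{p^{-\alpha(m+1)}-p^{-\beta(m+1)}}{p^{-\alpha}-p^{-\beta}},
\end{equation*}
which is the trivial-character specialization of the identity used in Proposition \ref{B prop}.

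Substituting this into the numerator of the local factor and expanding the product $(p^{-\alpha(j+1)}-p^{-\beta(j+1)})(p^{-\gamma(h_p+j+1)}-p^{-\delta(h_p+j+1)})$ produces four geometric series in $j$, the $(x,y)$-th summing to $(1-\chi(p)p^{-1-s-x-y})^{-1}$ for $(x,y)\in\{\alpha,\beta\}\times\{\gamma,\delta\}$. I would then pair the $(\alpha,\gamma)$ and $(\beta,\gamma)$ terms, and separately the $(\alpha,\delta)$ and $(\beta,\delta)$ terms, using the elementary identity
\begin{equation*}
p^{-\alpha}(1-\chi(p)p^{-1-s-\beta-y})-p^{-\beta}(1-\chi(p)p^{-1-s-\alpha-y})=p^{-\alpha}-p^{-\beta}
\end{equation*}
to cancel the factor $(p^{-\alpha}-p^{-\beta})^{-1}$ that came from the expansion of $\sigma_{\alpha,\beta}$. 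The result is a two-term expression whose denominator is $(p^{-\gamma}-p^{-\delta})$ times the four shifted factors $(1-\chi(p)p^{-1-s-x-y})$.

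Combining those two terms over this common denominator and expanding the resulting numerator identifies it term by term with $B^{\prime\,(0)}(s)-p^{-1}B^{\prime\,(1)}(s)+p^{-2}B^{\prime\,(2)}(s)$, where the $B^{\prime\,(i)}$ are as defined in the statement. Finally, I would divide by the corresponding expression at $h_p=0$: there $B^{\prime\,(1)}$ vanishes identically because of the factor $(p^{-\gamma h_p}-p^{-\delta h_p})$, and the remaining two summands collapse to $(p^{-\gamma}-p^{-\delta})(1-\chi(p)^2 p^{-2-\alpha-\beta-\gamma-\delta-2s})$; dividing then cancels the four $(1-\chi(p)p^{-1-s-x-y})$ factors in the denominator and delivers the stated formula. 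The only real obstacle is the algebraic bookkeeping in the expansion step, but since this follows the $B$-calculation line by line, with the single replacement $\chi\mapsto 1$ inside $\sigma$ together with the insertion of $\chi(p)$ into the geometric ratios, no new difficulty arises.
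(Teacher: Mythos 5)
Your proposal is correct and follows exactly the route the paper prescribes: it specializes the Euler-product calculation of Proposition~\ref{B prop} by replacing the twisted divisor functions $f$ with untwisted $\sigma$ and moving the character into the summand, after which the geometric-series expansion, the $(\alpha,\gamma)/(\beta,\gamma)$ and $(\alpha,\delta)/(\beta,\delta)$ pairing that cancels $(p^{-\alpha}-p^{-\beta})^{-1}$, and the division by the $h_p=0$ instance all go through verbatim. The paper itself gives no separate proof for this proposition beyond saying it follows the same method as for $B$, which is precisely what you have carried out.
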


\section{Relating Terms}\label{section 9}
\subsection{The Cases $i=j$}\label{9.1}
We now work on the terms in (\ref{off diagonal main terms}), putting them in terms of $Z_{\boldsymbol{\alpha},h,k}(s)$. 
\begin{lem}\label{lem1}We have 
\begin{equation}h^\alpha k^\gamma C_{11,\boldsymbol{\alpha},h,k}(0)=B_{-\gamma,\beta,-\alpha,\delta,h,k}(0).
\end{equation}
This implies, by use of the functional equation (\ref{C_11 functional 2}), that
\begin{equation}h^\beta k^\delta C_{22, \boldsymbol{\alpha},h,k}(0)=B_{\alpha,-\delta,\gamma,-\beta,h,k}(0).\end{equation}
\end{lem}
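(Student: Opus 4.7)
The plan is to verify the identity as a product of local factors at each prime dividing $hk$. Since both sides factor according to the prime divisors of $h$ and of $k$ separately --- namely $B_{-\gamma,\beta,-\alpha,\delta,h,k}(0) = B_{-\gamma,\beta,-\alpha,\delta,h}(0,\overline{\chi})\,B_{-\alpha,\delta,-\gamma,\beta,k}(0,\chi)$ and similarly for $C_{11}$ --- the identity splits by $h \leftrightarrow k$ symmetry into
\begin{equation*}
h^\alpha\, C_{11,\alpha,\beta,\gamma,\delta,h}(0,\overline{\chi}) = B_{-\gamma,\beta,-\alpha,\delta,h}(0,\overline{\chi})
\end{equation*}
and its analogue with $k$. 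I would then split the right-hand side further by whether $p|h$ satisfies $p|q$ or $p\nmid q$.

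For $p|q$ with $p|h$: writing down the $B$-factor from Proposition \ref{B prop} under the shifts $(\alpha,\beta,\gamma,\delta)\mapsto(-\gamma,\beta,-\alpha,\delta)$ and using $\chi(p)=\overline{\chi}(p)=0$, both $B^{(1)}$ and $B^{(2)}$ vanish, $B^{(0)}(0)=p^{\alpha(h_p+1)}$, and the denominator collapses to $p^\alpha$, leaving the local value $p^{\alpha h_p}$. On the left, the product defining $C_{11,h}$ is restricted to $p\nmid q$, so the only contribution is from the factor $h^\alpha=\prod_{p|h}p^{\alpha h_p}$, which matches exactly. Thus the primes dividing $q$ contribute identically on both sides, and the statement reduces to the primes $p\nmid q$.

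For $p\nmid q$ with $p|h$: multiplying numerator and denominator of $p^{\alpha h_p}\cdot(\text{local factor of }C_{11,h})$ by an extra $p^\alpha$ turns the denominator $(1-\overline{\chi}(p)p^{-\alpha-\delta})(1-p^{-2+\alpha-\beta+\gamma-\delta})$ into $(p^\alpha-\overline{\chi}(p)p^{-\delta})(1-p^{-2+\alpha-\beta+\gamma-\delta})$, which matches the denominator of the shifted $B$-factor (using $|\chi(p)|^2=1$). It then remains to verify
\begin{equation*}
p^{\alpha(h_p+1)}\bigl(C^{(0)}_{11}(0)-p^{-1}C^{(1)}_{11}(0)+p^{-2}C^{(2)}_{11}(0)\bigr) = B^{(0)}(0)-p^{-1}B^{(1)}(0)+p^{-2}B^{(2)}(0),
\end{equation*}
with the $B^{(i)}$ evaluated under $(\alpha,\beta,\gamma,\delta)\mapsto(-\gamma,\beta,-\alpha,\delta)$. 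This is checked separately at each of the three $p$-adic levels from the explicit formulae in Propositions \ref{U_11} and \ref{B prop}; each identity is a short algebraic manipulation.

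For the second assertion, I would apply the first identity with the substitution $\boldsymbol{\alpha}\mapsto-\boldsymbol{\gamma}$, yielding $h^{-\gamma}k^{-\alpha}C_{11,-\boldsymbol{\gamma},h,k}(0)=B_{\alpha,-\delta,\gamma,-\beta,h,k}(0)$, and then use the functional equation (\ref{C_11 functional 2}) at $s=0$, which gives $h^\beta k^\delta C_{22,\boldsymbol{\alpha},h,k}(0)=h^{-\gamma}k^{-\alpha}C_{11,-\boldsymbol{\gamma},h,k}(0)$. Combining the two yields the claim. The only real obstacle is keeping the many shift substitutions and sign conventions straight; there is no conceptual difficulty beyond the bookkeeping.
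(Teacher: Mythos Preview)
Your proposal is correct and follows essentially the same approach as the paper: reduce by the $h\leftrightarrow k$ symmetry to the $h$-factor, separate the primes $p\mid q$ (where the $B$-factor collapses to $p^{\alpha h_p}$ and matches the contribution from $h^\alpha$) from $p\nmid q$, and then verify the numerator identity level-by-level for $i=0,1,2$ using the explicit formulae in Propositions~\ref{U_11} and~\ref{B prop}. Your derivation of the second assertion from the first via the substitution $\boldsymbol{\alpha}\mapsto-\boldsymbol{\gamma}$ and the functional equation (\ref{C_11 functional 2}) at $s=0$ is exactly what the paper intends.
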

\begin{proof}Once again, by symmetry it suffices to show 
\begin{equation}\label{C relate B}h^\alpha C_{11,\alpha,\beta,\gamma,\delta,h}(0)=B_{-\gamma,\beta,-\alpha,\delta,h}(0).\end{equation}
We may first split the product in $B_{-\gamma,\beta,-\alpha,\delta,h}(0)$ over primes $p|q$ and primes $p\nmid q$. If $p|q$ then the local factor is given by $p^{\alpha h_p}$. We may therefore remove a factor of $h(q)^\alpha$ from both sides of (\ref{C relate B}) and henceforth only consider the products over primes $p\nmid q$. The problem now reduces to showing
\[p^{\alpha h_p}\frac{C^{(i)}_{11,\alpha,\beta,\gamma,\delta,h}(0)}{1-\overline{\chi}(p)p^{-\alpha-\delta}}=\frac{B^{(i)}_{-\gamma,\beta,-\alpha,\delta,h}(0)}{p^{\alpha}-\overline{\chi}(p)p^{-\delta}}\] 
which reduces to showing
\[p^{\alpha h_p}{C^{(i)}_{11,\alpha,\beta,\gamma,\delta,h}(0)}=p^{-\alpha}B^{(i)}_{-\gamma,\beta,-\alpha,\delta,h}(0)\]
for $i=0,1,2$ which can be checked by inspection.
\end{proof}

We now demonstrate the cancellation of the $R$ terms of Proposition \ref{off diagonal main terms prop} with the residue terms Proposition \ref{diagonals prop}. We
first work on the $R$ terms with negative coefficient.

\begin{lem}\label{R_alpha}We have 
\begin{equation}R\left(\frac{-\alpha-\gamma}{2}\right)=J^{(1)}_{\alpha,\gamma}.\end{equation}
\end{lem}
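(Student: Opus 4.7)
The plan is to expand both sides using the definition \eqref{R} of $R(b)$ and the formula for $J^{(1)}_{a,b}$ from Proposition \ref{diagonals prop}, cancel all common factors, and reduce the claim to a local Euler-product identity that can be verified prime by prime in the spirit of Lemma \ref{lem1}.

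First I would compute $\mathrm{Res}_{2s=-\alpha-\gamma}Z_{\boldsymbol{\alpha},h,k}(2s)$. Writing $Z(u)=A_{\boldsymbol{\alpha}}(u)B_{\boldsymbol{\alpha},h,k}(u)$, the only source of a pole at $u=-\alpha-\gamma$ is the simple pole of $\zeta(1+\alpha+\gamma+u)$ inside $A$, which contributes residue $1$; the halving relation $\mathrm{Res}_{s=u_0/2}f(2s)=\tfrac12\mathrm{Res}_{u=u_0}f(u)$ then gives
\begin{equation*}
\mathrm{Res}_{2s=-\alpha-\gamma}Z(2s)=\tfrac12\,\zeta(1-\alpha+\beta-\gamma+\delta)\frac{L_{\alpha,\beta}(\chi)L_{\gamma,\delta}(\overline{\chi})}{\zeta(2-\alpha+\beta-\gamma+\delta)}\,Q_{11}\!\left(\tfrac{-\alpha-\gamma}{2}\right)B_{\boldsymbol{\alpha},h,k}(-\alpha-\gamma),
\end{equation*}
the point being that the $p\mid q$ Euler factor of $A(u)$ at $u=-\alpha-\gamma$ agrees exactly with $Q_{11}(-(\alpha+\gamma)/2)$ defined in \eqref{Q}, as one sees by direct substitution.

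Next I substitute this residue into $J^{(1)}_{\alpha,\gamma}$ and compare with $R(-(\alpha+\gamma)/2)$ term by term. The factor $q^{-(\alpha+\gamma)/2}$, the evaluation $G(b)/b$ at $b=-(\alpha+\gamma)/2$, the $w$-integral against $(t/2\pi)^{-\alpha-\gamma}$, the two $L$-functions, the ratio $\zeta(1-\alpha+\beta-\gamma+\delta)/\zeta(2-\alpha+\beta-\gamma+\delta)$, the $Q_{11}$ factor, and the factor of $\tfrac12$ (which matches the $\tfrac12$ produced by the residue halving) are all common; after dividing out these matching terms the identity to be proved collapses to
\begin{equation*}
h^{-\gamma}k^{-\alpha}\,C_{11,\boldsymbol{\alpha},h,k}\!\left(\tfrac{-\alpha-\gamma}{2}\right)=B_{\boldsymbol{\alpha},h,k}(-\alpha-\gamma).
\end{equation*}

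This remaining identity is multiplicative, so by the built-in $h\leftrightarrow k$ symmetry it suffices to check it at each prime $p\mid h$ with $p\nmid q$. At $s_0=-(\alpha+\gamma)/2$ one has $-\alpha-\delta-2s_0=\gamma-\delta$, and hence the first denominator factor in the local piece of $C_{11,\alpha,\beta,\gamma,\delta,h}(s_0,\overline{\chi})$ equals $p^{\gamma}$ times the factor $p^{-\gamma}-\overline{\chi}(p)p^{-\delta}$ appearing in $B_{\alpha,\beta,\gamma,\delta,h}(-\alpha-\gamma,\overline{\chi})$, while the second denominator factors agree upon using $|\chi(p)|^2=1$. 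Absorbing the remaining prefactor $p^{-\gamma h_p}$ from $h^{-\gamma}$ then reduces the proof to the three elementary equalities
\begin{equation*}
p^{-\gamma(h_p+1)}\,C_{11}^{(i)}\!\left(\tfrac{-\alpha-\gamma}{2}\right)=B^{(i)}(-\alpha-\gamma),\qquad i=0,1,2,
\end{equation*}
each of which is an immediate inspection of the closed forms listed in Propositions \ref{U_11} and \ref{B prop}. The principal difficulty is really bookkeeping --- correctly aligning the factors $q^{\pm(\alpha+\gamma)/2}$, $(hk)^{\pm(\alpha+\gamma)/2}$ and the residue-halving $\tfrac12$ --- after which the verification of the local Euler-factor identity is routine.
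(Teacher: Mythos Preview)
Your proposal is correct and follows essentially the same route as the paper's own proof: compute $\mathrm{Res}_{2s=-\alpha-\gamma}Z_{\boldsymbol{\alpha},h,k}(2s)$, cancel all matching factors against $R(-(\alpha+\gamma)/2)$ to reduce to the Euler-product identity $h^{-\gamma}k^{-\alpha}C_{11,\boldsymbol{\alpha},h,k}(-(\alpha+\gamma)/2)=B_{\boldsymbol{\alpha},h,k}(-\alpha-\gamma)$, and then verify this locally using the closed forms of Propositions~\ref{U_11} and~\ref{B prop}. Your simplified local identity $p^{-\gamma(h_p+1)}C_{11}^{(i)}=B^{(i)}$ is exactly equivalent to the paper's displayed version after clearing the denominator $1-\overline{\chi}(p)p^{\gamma-\delta}=p^{\gamma}(p^{-\gamma}-\overline{\chi}(p)p^{-\delta})$.

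One small gap: when you say ``it suffices to check it at each prime $p\mid h$ with $p\nmid q$'', you have silently dropped the primes $p\mid h$ with $p\mid q$. Since $C_{11,\alpha,\beta,\gamma,\delta,h}$ is a product only over $p\nmid q$ while $B_{\alpha,\beta,\gamma,\delta,h}$ runs over all $p\mid h$, you still owe a word on the $p\mid q$ contribution. The paper handles this by noting that for $p\mid q$ the local factor of $B$ collapses (since $\chi(p)=0$) to $p^{-\gamma h_p}$, so that one may strip a factor $h(q)^{-\gamma}$ from both sides before restricting to $p\nmid q$. Adding this one-line observation makes your argument complete.
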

\begin{proof}To prove this it suffices to show 
\begin{equation}\begin{split}&\frac{1}{2}\frac{L_{\alpha,\beta}(\chi)L_{\gamma,\delta}(\overline{\chi})\zeta(1-\alpha+\beta-\gamma+\delta)}{\zeta(2-\alpha+\beta-\gamma+\delta)}\left(\frac{h}{k}\right)^\frac{\alpha-\gamma}{2}
\\&\times Q_{11}\left(\frac{-\alpha-\gamma}{2}\right)C_{11,\boldsymbol{\alpha},h,k}\left(\frac{-\alpha-\gamma}{2}\right)
\\=&\frac{\mathrm{Res}_{2s=-\alpha-\gamma}( Z_{\alpha,\beta,\gamma,\delta,h,k}(2s))}{(hk)^{-\frac{\alpha+\gamma}{2}}}
\\=&(hk)^{\frac{\alpha+\gamma}{2}}{\mathrm{Res}_{2s=-\alpha-\gamma}( A_{\alpha,\beta,\gamma,\delta,q}(2s))}B_{\alpha,\beta,\gamma,\delta,h,k}(-\alpha-\gamma)
\end{split}\end{equation}
which reduces to showing
\begin{equation}h^{-\gamma}C_{11,\boldsymbol{\alpha},h}\left(\frac{-\alpha-\gamma}{2}\right)= B_{\alpha,\beta,\gamma,\delta,h}(-\alpha-\gamma).\end{equation}
Once again we may remove a factor of $h(q)^{-\gamma}$ from both sides and the problem reduces to showing that the following identities hold
\begin{equation}p^{-\gamma h_p}\frac{C^{(i)}_{11,\alpha,\beta,\gamma,\delta,h}(-(\alpha+\gamma)/2)}{1-\overline{\chi}(p)p^{\gamma-\delta}}=\frac{B^{(i)}_{\alpha,\beta, \gamma,\delta,h}(-\alpha-\gamma)}{p^{-\gamma}-\overline{\chi}(p)p^{-\delta}}\end{equation}
for $i=0,1,2$ which can be checked by inspection.
\end{proof}

\begin{lem}\label{R^prime_beta}We have
\[R^\prime\left(\frac{-\beta-\delta}{2}\right)=J_{\beta,\delta}^{(1)}\]
\end{lem}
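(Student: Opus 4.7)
The plan is to mirror Lemma \ref{R_alpha}, reducing the identity to a local check on Euler factors at each prime dividing $hk$.

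First, I would compute the residue $\mathrm{Res}_{v=-\beta-\delta}Z_{\boldsymbol{\alpha},h,k}(v)$, which comes entirely from the simple pole of $\zeta(1+\beta+\delta+v)$ inside $A_{\boldsymbol{\alpha}}(v)$. Evaluating the remaining factors of $A_{\boldsymbol{\alpha}}$ at $v=-\beta-\delta$ produces precisely the combination $\zeta(1+\alpha-\beta+\gamma-\delta)L_{-\gamma,-\delta}(\chi)L_{-\alpha,-\beta}(\overline{\chi})/\zeta(2+\alpha-\beta+\gamma-\delta)$, together with the $q$-Euler product $\prod_{p\mid q}(1-p^{-1})/(1-p^{-2-\alpha+\beta-\gamma+\delta})$. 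Interpreting $\mathrm{Res}_{2s=\cdot}$ as a residue in the variable $s$ contributes an extra factor $1/2$ that matches the $1/2$ already built into $R'(b)$.

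Next, I would substitute $b=-(\beta+\delta)/2$ into $R'(b)$ and observe that the $L$-functions, the $\zeta$-quotient, the weight $(t/2\pi)^{-\beta-\delta}$, and the $q$-power $q^{-b-\beta-\delta}=q^{-(\beta+\delta)/2}$ coincide with those in $J^{(1)}_{\beta,\delta}$ on the nose. A direct substitution into $Q_{22}(s)=Q_{11,-\boldsymbol{\gamma},q}(-s)$ yields
\[
Q_{22}\!\left(-\tfrac{\beta+\delta}{2}\right)=\prod_{p\mid q}\frac{1-p^{-1}}{1-p^{-2-\alpha+\beta-\gamma+\delta}},
\]
which cancels exactly with the $p\mid q$ Euler product arising from the residue of $A_{\boldsymbol{\alpha}}$. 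After all of these cancellations, the equality $R'(-\tfrac{\beta+\delta}{2})=J^{(1)}_{\beta,\delta}$ collapses to the single identity
\[
h^{-\delta}k^{-\beta}\,C_{22,\boldsymbol{\alpha},h,k}\!\left(-\tfrac{\beta+\delta}{2}\right)=B_{\boldsymbol{\alpha},h,k}(-\beta-\delta).
\]

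To verify this, I would factor both sides into their $h$- and $k$-parts. The prefactor $h(q)^{-\beta-\gamma-2s}k(q)^{-\alpha-\delta-2s}$ in $C_{22,\boldsymbol{\alpha},h,k}(s)$, evaluated at $s=-(\beta+\delta)/2$, becomes $h(q)^{-\gamma+\delta}k(q)^{-\alpha+\beta}$ and combines with $h^{-\delta}k^{-\beta}$ to reproduce the $q$-Euler part $h(q)^{-\gamma}k(q)^{-\alpha}$ of $B_{\boldsymbol{\alpha},h,k}(-\beta-\delta)$. By the $(\alpha,\beta,h,\overline{\chi})\leftrightarrow(\gamma,\delta,k,\chi)$ symmetry it then suffices, at each $p\nmid q$ with $p\mid h$, to verify
\[
p^{-\delta h_p}\,C^{(i)}_{22}\!\left(-\tfrac{\beta+\delta}{2}\right)=p^{\gamma}\,B^{(i)}(-\beta-\delta)\qquad(i=0,1,2),
\]
the factor $p^{\gamma}$ reconciling the local denominators $(1-\overline{\chi}(p)p^{\gamma-\delta})$ of $C_{22}$ with $(p^{-\gamma}-\overline{\chi}(p)p^{-\delta})$ of $B$. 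Each of these three identities can be read off directly from the explicit formulas for $B^{(i)}$ and $C_{22}^{(i)}$, exactly as in Lemma \ref{R_alpha}.

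The main obstacle I expect is purely bookkeeping: tracking the three different $q$-prefactors $h(q)^{\bullet}$, $k(q)^{\bullet}$, and $q^{\bullet}$, together with the $\prod_{p\mid q}$ Euler products appearing in both $Q_{22}$ and $\mathrm{Res}\,A_{\boldsymbol{\alpha}}$, consistently through each substitution, and correctly converting between residues in $2s$ and in $s$ so that no spurious factor of $2$ or sign error creeps in.
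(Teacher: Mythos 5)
Your proposal is correct and follows essentially the same route as the paper: reduce via the residue of $A_{\boldsymbol{\alpha}}$ and the $q^{\bullet}$-bookkeeping to the Euler identity $h^{-\delta}k^{-\beta}C_{22,\boldsymbol{\alpha},h,k}\bigl(-\tfrac{\beta+\delta}{2}\bigr)=B_{\boldsymbol{\alpha},h,k}(-\beta-\delta)$, split off the $h(q)^{-\gamma}k(q)^{-\alpha}$ part, and verify the local factors at $p\nmid q$ by inspection of the $B^{(i)}$ and $C_{22}^{(i)}$. The paper states the reduction as $h^{-\delta}h(q)^{-\gamma+\delta}C_{22,\boldsymbol{\alpha},h}\bigl(-\tfrac{\beta+\delta}{2}\bigr)=B_{\boldsymbol{\alpha},h}(-\beta-\delta)$ (one side at a time), with the local check written as $p^{-\delta h_p}C^{(i)}_{22}\bigl(-\tfrac{\beta+\delta}{2}\bigr)\big/(1-\overline{\chi}(p)p^{\gamma-\delta})=B^{(i)}(-\beta-\delta)\big/(p^{-\gamma}-\overline{\chi}(p)p^{-\delta})$, which is exactly your $p^{-\delta h_p}C^{(i)}_{22}=p^{\gamma}B^{(i)}$ after clearing denominators.
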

\begin{proof}We need to show
\begin{equation}\begin{split}&\frac{1}{2}\frac{L_{-\gamma,-\delta}(\chi)L_{-\alpha,-\beta}(\overline{\chi})\zeta(1+\alpha-\beta+\gamma-\delta)}{\zeta(2+\alpha-\beta+\gamma-\delta)}\left(\frac{h}{k}\right)^\frac{\beta-\delta}{2}\\&\,\,\,\,\times Q_{22}\left(\frac{-\beta-\delta}{2}\right)C_{22,\boldsymbol{\alpha},h,k}\left(\frac{-\beta-\delta}{2}\right)
\\=&(hk)^{\frac{\beta+\delta}{2}}\mathrm{Res}_{2s=-\beta-\delta}( Z_{\boldsymbol{\alpha},h,k}(2s))
\end{split}\end{equation}
which reduces to showing 
\begin{equation}\label{C_22 to B}h^{-\delta}h(q)^{-\gamma+\delta}C_{22,\boldsymbol{\alpha},h}\left(\frac{-\beta-\delta}{2}\right)=B_{\boldsymbol{\alpha},h}(-\beta-\delta).\end{equation}
The required identities are thus
\[p^{-\delta h_p}\frac{C^{(i)}_{22,\boldsymbol{\alpha},h}\left(\frac{-\beta-\delta}{2}\right)}{1-\overline{\chi}(p)p^{\gamma-\delta}}=\frac{B^{(i)}_{\alpha,\beta,\gamma,\delta,h}\left(-\beta-\delta\right)}{p^{-\gamma}-\overline{\chi}(p)p^{-\delta}}\]
for $i=0,1,2$ each of which can be verified by inspection. 
\end{proof}
The cancellation of the residue terms of $I^{(2)}_D$ of proposition \ref{diagonals prop} is given by the following. 

\begin{lem}\label{R_beta}We have
\begin{equation}R\left(\frac{-\beta-\delta}{2}\right)=-J_{\beta,\delta}^{(2)},\,\,\,\,\,\,\,R^\prime\left(\frac{-\alpha-\gamma}{2}\right)=-J^{(2)}_{\alpha,\gamma}.\end{equation}
\end{lem}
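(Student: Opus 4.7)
Both identities will be verified in the same style as Lemmas \ref{R_alpha} and \ref{R^prime_beta}. I will sketch the first, $R((-\beta-\delta)/2) = -J^{(2)}_{\beta,\delta}$; the second, $R'((-\alpha-\gamma)/2) = -J^{(2)}_{\alpha,\gamma}$, is handled by the mirror argument using (\ref{C_11 functional 2}) in place of (\ref{C_11 functional 1}).

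The plan is, first, to expand both sides via (\ref{R}) and Proposition \ref{diagonals prop}. The overall minus sign in the claim will come from the oddness of $G(b)/b$: since $G$ is even, $G((-\beta-\delta)/2)/((-\beta-\delta)/2) = -G((\beta+\delta)/2)/((\beta+\delta)/2)$. The two $t$-integrals then agree, since on the $J^{(2)}$ side the exponent $-\alpha-\beta-\gamma-\delta+\beta+\delta$ simplifies to $-\alpha-\gamma$.

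Next I compute $\mathrm{Res}_{u=\beta+\delta}Z_{-\boldsymbol{\gamma},h,k}(u)$. The pole comes solely from $\zeta(1-\beta-\delta+u)$ inside $A_{-\boldsymbol\gamma}(u)$, producing
\[
\frac{L(1+\beta-\alpha,\chi)L(1+\delta-\gamma,\overline{\chi})\zeta(1-\alpha+\beta-\gamma+\delta)}{\zeta(2-\alpha+\beta-\gamma+\delta)}\prod_{p|q}\frac{1-p^{-1}}{1-p^{-2+\alpha-\beta+\gamma-\delta}}\cdot B_{-\boldsymbol\gamma,h,k}(\beta+\delta).
\]
Since $Q_{11}((-\beta-\delta)/2)$ matches the product over $p\mid q$, and the $L$- and $\zeta$-prefactors in $R$ coincide with those above, and the explicit $\tfrac{1}{2}$ in (\ref{R}) cancels the implicit $\tfrac{1}{2}$ coming from the convention $\mathrm{Res}_{2s=c} = \mathrm{Res}_{s=c/2}$ in the definition of $J^{(2)}$, the identity is reduced to the Euler-product statement
\[
h^\alpha k^\gamma C_{11,\boldsymbol\alpha,h,k}\bigl(\tfrac{-\beta-\delta}{2}\bigr) = B_{-\boldsymbol\gamma,h,k}(\beta+\delta).
\]

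This is then proved by combining the functional equation (\ref{C_11 functional 1}) evaluated at $s = (\beta+\delta)/2$, which rewrites the left side as $h^\beta k^\delta C_{22,-\boldsymbol\gamma,h,k}((\beta+\delta)/2)$, with equation (\ref{C_22 to B}) from the proof of Lemma \ref{R^prime_beta} under the formal substitution $\boldsymbol\alpha \mapsto -\boldsymbol\gamma$. That substitution turns (\ref{C_22 to B}) into $h^\beta h(q)^{\alpha-\beta}C_{22,-\boldsymbol\gamma,h}((\beta+\delta)/2,\overline\chi) = B_{-\boldsymbol\gamma,h}(\beta+\delta,\overline\chi)$, with the symmetric $k$-statement in parallel; multiplying these and absorbing the $h(q)^{\alpha-\beta}k(q)^{\gamma-\delta}$ prefactor of $C_{22,-\boldsymbol\gamma,h,k}((\beta+\delta)/2)$ gives the desired identity. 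The main obstacle is purely bookkeeping: one must carefully track the $h(q)$- and $k(q)$-power prefactors that distinguish $C_{22}$ from $C_{11}$ and ensure the $\tfrac{1}{2}$ hidden in $\mathrm{Res}_{2s=c}$ is correctly located. Past that point, every remaining step is a routine prime-by-prime check using Propositions \ref{B prop} and \ref{C_11 functional prop}.
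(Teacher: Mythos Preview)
Your proposal is correct and follows essentially the same approach as the paper: reduce to the Euler-product identity $h^\alpha k^\gamma C_{11,\boldsymbol{\alpha},h,k}(\tfrac{-\beta-\delta}{2})=B_{-\boldsymbol{\gamma},h,k}(\beta+\delta)$, then prove it by combining the functional equation (\ref{C_11 functional 1}) with (\ref{C_22 to B}) under $\boldsymbol{\alpha}\mapsto -\boldsymbol{\gamma}$, and handle the second identity symmetrically via (\ref{C_11 functional 2}) and the identity from Lemma~\ref{R_alpha}. The only cosmetic difference is that you work at the full $(h,k)$ level while the paper drops immediately to the single-factor $h$-identity; your extra discussion of the sign from $G(b)/b$, the matching of $t$-exponents, and the residue computation fills in details the paper leaves implicit.
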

\begin{proof}The first of these requires showing
\begin{equation}\begin{split}&\frac{1}{2}\frac{L_{\alpha,\beta}(\chi)L_{\gamma,\delta}(\overline{\chi})\zeta(1-\alpha+\beta-\gamma+\delta)}{\zeta(2-\alpha+\beta-\gamma+\delta)}h^\alpha k^\gamma \left(hk\right)^\frac{-\beta-\delta}{2}
\\&\,\,\,\,\times Q_{11}\left(\frac{-\beta-\delta}{2}\right)C_{11,\boldsymbol{\alpha},h,k}\left(\frac{-\beta-\delta}{2}\right)
\\=&\frac{\mathrm{Res}_{2s=\beta+\delta}( Z_{-\boldsymbol{\gamma},h,k}(2s))}{(hk)^{\frac{\beta+\delta}{2}}}
\end{split}\end{equation}
which reduces to showing
\begin{equation}h^\alpha C_{11,\boldsymbol{\alpha},h}\left(\frac{-\beta-\delta}{2}\right)= B_{-\boldsymbol{\gamma},h}(\beta+\delta).\end{equation}
By (\ref{C_11 functional 1}) we have
\begin{equation}h^\alpha C_{11,\boldsymbol{\alpha},h}\left(\frac{-\beta-\delta}{2}\right)=h^\beta h(q)^{\alpha-\beta} C_{22,-\boldsymbol{\gamma},h}\left(\frac{\beta+\delta}{2}\right)\end{equation}
but by (\ref{C_22 to B}) we have
\begin{equation}h^{-\delta}h(q)^{-\gamma+\delta}C_{22,\boldsymbol{\alpha},h}\left(\frac{-\beta-\delta}{2}\right)=B_{\boldsymbol{\alpha},h}(-\beta-\delta)\end{equation}
and so by permuting the shift parameters we can conclude the result. The result for $R^\prime\left(\frac{-\alpha-\gamma}{2}\right)$ requires 
\begin{equation}h^\beta h(q)^{\alpha-\beta}C_{22,\boldsymbol{\alpha},h}\left(\frac{-\alpha-\gamma}{2}\right)=B_{-\boldsymbol{\gamma},h}(\alpha+\gamma)\end{equation}
but by (\ref{C_11 functional 2}) we have
\begin{equation}h^\beta h(q)^{\alpha-\beta}C_{22,\boldsymbol{\alpha},h}\left(\frac{-\alpha-\gamma}{2}\right)=h^\alpha C_{11,-\boldsymbol{\gamma},h}\left(\frac{\alpha+\gamma}{2}\right).\end{equation}
In Lemma \ref{R_alpha} it was shown that
\begin{equation}h^{-\gamma}C_{11,\boldsymbol{\alpha},h}\left(\frac{-\alpha-\gamma}{2}\right)= B_{\boldsymbol{\alpha},h}(-\alpha-\gamma)\end{equation}
and so by permuting the shifts again we have the desired result.
\end{proof}

\subsection{The Cases $i\neq j$}\label{9.2}

\begin{lem}Suppose $q|h$. Then 
\begin{equation}q^{-\alpha}M_{\alpha,\gamma,h}(0)h^\alpha k^\delta C_{12,\boldsymbol{\alpha},h,k}(0)=B^\prime_{-\delta,\beta,\gamma,-\alpha,{h}/{q},k}(0,\chi)
\end{equation}
\end{lem}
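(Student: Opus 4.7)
The plan is to follow the inspect-local-factors template used in Lemma \ref{lem1} and Lemmas \ref{R_alpha}--\ref{R_beta}. First I would use the product decompositions
\begin{equation*}
C_{12,\boldsymbol{\alpha},h,k}(0)=C_{12,\alpha,\beta,\gamma,\delta,h}(0)\,C_{12,\delta,\gamma,\beta,\alpha,k}(0)
\end{equation*}
and
\begin{equation*}
B^\prime_{-\delta,\beta,\gamma,-\alpha,h/q,k}(0,\chi)=B^\prime_{-\delta,\beta,\gamma,-\alpha,h/q}(0,\chi)\,B^\prime_{\gamma,-\alpha,-\delta,\beta,k}(0,\chi)
\end{equation*}
to split the claim into an $h$-piece and a $k$-piece. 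Since $q\mid h$ forces $(q,k)=1$, every prime dividing $k$ is coprime to $q$, so the $k$-identity $k^\delta C_{12,\delta,\gamma,\beta,\alpha,k}(0)=B^\prime_{\gamma,-\alpha,-\delta,\beta,k}(0,\chi)$ reduces to three prime-by-prime checks: after writing $k^\delta=\prod_{p\mid k}p^{\delta k_p}$ and using the denominator identity $p^{\delta}-p^{-\beta}=p^{\delta}(1-p^{-\beta-\delta})$, the numerator identities $p^{\delta k_p}C_{12,\delta,\gamma,\beta,\alpha}^{(i)}(0)=p^{-\delta}B^{\prime(i)}_{\gamma,-\alpha,-\delta,\beta}(0)$ for $i=0,1,2$ follow from direct expansion of the closed forms in Propositions \ref{U_12} and \ref{B^prime prop}.

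The $h$-piece is the substantial part, because $h/q$ may still share primes with $q$. I would decompose the primes dividing $h/q$ into those with $p\mid q$ (equivalently $h_p>q_p$) and those with $p\nmid q$ (equivalently $p\mid h^{*}$). For $p\mid q$, the character factor $\chi(p^j)$ kills every $j\geq 1$ term in both sums defining the local factor of $B^\prime_{-\delta,\beta,\gamma,-\alpha,h/q}(0,\chi)$, so that local factor collapses to the single term $\sigma_{\gamma,-\alpha}(p^{h_p-q_p})$. The combinatorial identity to verify is then
\begin{equation*}
q^{-\alpha}h(q)^{\alpha}\,M_{\alpha,\gamma,h}(0)=\prod_{p\mid h(q)/q}\sigma_{\gamma,-\alpha}(p^{h_p-q_p}),
\end{equation*}
which follows on pulling out $p^{\alpha(h_p-q_p)}$ from each $\sigma_{\gamma,-\alpha}(p^{h_p-q_p})=\sum_{j=0}^{h_p-q_p}p^{-\gamma j+\alpha(h_p-q_p-j)}$ and recognising the residual geometric sum $\sum_{j=0}^{h_p-q_p}p^{-(\alpha+\gamma)j}$ as the local factor of $M_{\alpha,\gamma,h}(0)$ from (\ref{M_alpha}). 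This handles every prime dividing $q$ at once.

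It remains to treat the primes with $p\nmid q$ and $p\mid h^{*}$, where the claim reduces to the local identities
\begin{equation*}
p^{\alpha h_p}\,\frac{C_{12,\alpha,\beta,\gamma,\delta}^{(i)}(0)}{(1-p^{-\alpha-\gamma})(1-\chi(p)^2p^{-2+\alpha-\beta-\gamma+\delta})}=\frac{B^{\prime(i)}_{-\delta,\beta,\gamma,-\alpha}(0)}{(p^{-\gamma}-p^{\alpha})(1-\chi(p)^2p^{-2+\alpha-\beta-\gamma+\delta})}
\end{equation*}
for $i=0,1,2$. The second denominator factor is literally invariant under $\alpha\to-\delta,\delta\to-\alpha$, and the first reduces via $p^{-\gamma}-p^{\alpha}=-p^{\alpha}(1-p^{-\alpha-\gamma})$ to the numerator identity $p^{\alpha h_p}C_{12}^{(i)}(0)=-p^{-\alpha}B^{\prime(i)}_{-\delta,\beta,\gamma,-\alpha}(0)$; for instance the $i=0$ case is the trivial computation $p^{\alpha h_p}(1-p^{-(h_p+1)(\alpha+\gamma)})=-p^{-\alpha}(p^{-\gamma(h_p+1)}-p^{\alpha(h_p+1)})$, and the $i=1,2$ cases come out identically on substituting shifts. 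The main obstacle is purely bookkeeping: tracking how the prefactors $q^{-\alpha}$, $h^\alpha=h^{*\alpha}h(q)^\alpha$, $k^\delta$ and the divisor sum $M_{\alpha,\gamma,h}(0)$ distribute across the three classes of primes while the shift substitutions remain consistent. No analytic input beyond this is required, and the proof is entirely in the spirit of Proposition \ref{C_11 functional prop} and Lemma \ref{lem1}.
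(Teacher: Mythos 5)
Your proof is correct and follows essentially the same strategy as the paper: factor the Euler products into the $p\mid q$ part, where the vanishing of $\chi(p^j)$ for $j\geq 1$ collapses the local factor of $B^\prime_{-\delta,\beta,\gamma,-\alpha,h/q}$ to $\sigma_{\gamma,-\alpha}(p^{h_p-q_p})$ and the product over these primes gives $\sigma_{\gamma,-\alpha}(h(q)/q)=(h(q)/q)^\alpha M_{\alpha,\gamma,h}(0)$, matching the prefactor, and the $p\nmid q$ parts over $h^*$ and $k$, which reduce to exactly the prime-by-prime identities among $C^{(i)}_{12}$ and $B^{\prime(i)}$ that the paper records. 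You supply a few intermediate verifications that the paper leaves as inspection, but the decomposition and the required local identities are the same.
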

\begin{proof}We first equate the factors that are given by products over primes $p|q$. By inspection of the Euler product of $B^\prime$ we see that 
\begin{equation}B^\prime_{-\delta,\beta,\gamma,-\alpha,{h}/{q},k}(0,\chi)=\sigma_{\gamma,-\alpha}(h(q)/q)B^\prime_{-\delta,\beta,\gamma,-\alpha,h^*}(0,\chi)B^\prime_{\gamma,-\alpha,-\delta,\beta,k}(0,\chi).
\end{equation}
But
\begin{equation}\sigma_{\gamma,-\alpha}(h(q)/q)=\sum_{m|h(q)/q}m^{-\gamma}\left(\frac{h(q)/q}{m}\right)^\alpha=\left(\frac{h(q)}{q}\right)^\alpha M_{\alpha,\gamma,h}(0)
\end{equation}
and so we're done. As usual, for the products over primes $p\nmid q$ we must check that the local factors of $(h^*)^\alpha C_{12,\boldsymbol{\alpha},h}(0)$ and $B^\prime_{-\delta,\beta,\gamma,-\alpha,h^*}(0,\chi)$ match. The required identities are thus 
\begin{equation}p^{\alpha h_p}\frac{C_{12,\boldsymbol{\alpha},h}^{(i)}(0)}{1-p^{-\alpha-\gamma}}=\frac{B_{-\delta,\beta,\gamma,-\alpha,h^*}^{(i)}(0,\chi)}{p^{-\gamma}-p^\alpha}
\end{equation}
for $i=0,1,2$ each of which is easily verified by inspection. 
\end{proof}

By recalling that $C_{21,\boldsymbol{\alpha},h,k}(s,\chi)=C_{12,\boldsymbol{\alpha},k,h}(s,\overline{\chi})$ a similar method of proof to the above gives 

\begin{lem}Suppose $q|k$. Then 
\begin{equation}q^{-\gamma}M_{\alpha,\gamma,k}(0)h^\beta k^\gamma C_{21,\boldsymbol{\alpha},h,k}(0)=B^\prime_{\alpha,-\gamma,-\beta,\delta,h,k/q}(0,\overline{\chi}).
\end{equation}
\end{lem}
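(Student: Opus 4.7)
The plan is to adapt the proof of the preceding lemma. First, use the relation $C_{21,\boldsymbol{\alpha},h,k}(s,\chi)=C_{12,\boldsymbol{\alpha},k,h}(s,\overline{\chi})$ noted just above the statement to rewrite the left-hand side as
\[
q^{-\gamma}M_{\alpha,\gamma,k}(0)\,h^{\beta}k^{\gamma}\,C_{12,\boldsymbol{\alpha},k,h}(0,\overline{\chi}).
\]
Since $(h,k)=1$ and $q\mid k$ force $(h,q)=1$, the factorisations
$C_{12,\boldsymbol{\alpha},k,h}(s,\overline{\chi})=C_{12,\alpha,\beta,\gamma,\delta,k}(s,\overline{\chi})\,C_{12,\delta,\gamma,\beta,\alpha,h}(s,\overline{\chi})$
and
$B^{\prime}_{\alpha,-\gamma,-\beta,\delta,h,k/q}(0,\overline{\chi})=B^{\prime}_{\alpha,-\gamma,-\beta,\delta,h}(0,\overline{\chi})\,B^{\prime}_{-\beta,\delta,\alpha,-\gamma,k/q}(0,\overline{\chi})$
split the claimed identity into an $h$-part
\[
h^{\beta}C_{12,\delta,\gamma,\beta,\alpha,h}(0,\overline{\chi})=B^{\prime}_{\alpha,-\gamma,-\beta,\delta,h}(0,\overline{\chi})
\]
and a $k$-part
\[
q^{-\gamma}M_{\alpha,\gamma,k}(0)\,k^{\gamma}\,C_{12,\alpha,\beta,\gamma,\delta,k}(0,\overline{\chi})=B^{\prime}_{-\beta,\delta,\alpha,-\gamma,k/q}(0,\overline{\chi}).
\]

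For the $h$-part, every prime $p\mid h$ satisfies $p\nmid q$, and the identity reduces to verifying, for $i=0,1,2$, the three local identities
\[
p^{\beta h_{p}}\,\frac{C^{(i)}_{12,\delta,\gamma,\beta,\alpha,h}(0,\overline{\chi})}{1-p^{-\beta-\delta}}=\frac{B^{\prime(i)}_{\alpha,-\gamma,-\beta,\delta,h}(0,\overline{\chi})}{p^{\beta}-p^{-\delta}}
\]
by direct substitution into the explicit formulas of Propositions \ref{U_12} and \ref{B^prime prop}. For the $k$-part I separate the contributions from primes $p\mid q$ and $p\nmid q$. At a prime $p\mid q$ dividing $k(q)/q$, the sums in the local factor of $B^{\prime}_{-\beta,\delta,\alpha,-\gamma,k/q}$ collapse to their $j=0$ term because $\overline{\chi}(p)=0$, leaving only $\sigma_{\alpha,-\gamma}(p^{(k/q)_{p}})$. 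Multiplying over all such $p$ gives
\[
\sigma_{\alpha,-\gamma}(k(q)/q)=(k(q)/q)^{\gamma}M_{\alpha,\gamma,k}(0),
\]
which pairs with the factor $q^{-\gamma}k(q)^{\gamma}$ on the left to match exactly. The remaining primes $p\mid k$ with $p\nmid q$ produce an Euler-factor identity of exactly the same shape as in the $h$-part, and are handled in the same way.

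The main, and essentially only, obstacle is the bookkeeping of the shift permutations introduced by the symmetry $C_{21}\to C_{12}$ together with the factorisations of $C_{12}$ and $B^{\prime}$; once the correspondence between subscripts is lined up, each verification is a purely mechanical Euler-factor computation, wholly analogous to the argument used in the preceding lemma.
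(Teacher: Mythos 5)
Your overall strategy matches the paper exactly: invoke the symmetry between $C_{21}$ and $C_{12}$, factor each side into its $h$-part and $k$-part, and check the resulting Euler-factor identities by the same mechanical computation as in the $q\mid h$ lemma. However, the shift permutation you assign to the $h$-factor of $C_{12,\boldsymbol{\alpha},k,h}(s,\overline{\chi})$ is wrong, and this makes your $h$-part identity false.

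You factorise
\[
C_{12,\boldsymbol{\alpha},k,h}(s,\overline{\chi})
= C_{12,\alpha,\beta,\gamma,\delta,k}(s,\overline{\chi})\,
  C_{12,\delta,\gamma,\beta,\alpha,h}(s,\overline{\chi}),
\]
obtained by taking $\boldsymbol{\alpha}=(\alpha,\beta,\gamma,\delta)$ in the factorisation from Proposition~\ref{U_12} and then literally swapping $h$ and $k$. But Proposition~\ref{U_12} was derived under $q\mid h$, so one cannot simply permute the arguments; one has to go back to the actual relation~(\ref{U_21}), which identifies $U_{21}$ (with $q\mid k$) with a $U_{12}$ whose shift parameters have also been permuted. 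Doing this (or, equivalently, applying the swap $h\leftrightarrow k$, $\alpha\leftrightarrow\gamma$, $\beta\leftrightarrow\delta$, $\chi\leftrightarrow\overline{\chi}$ to the $q\mid h$ lemma) gives the $h$-factor
\[
C_{12,\beta,\alpha,\delta,\gamma,h}(s,\overline{\chi}),
\]
not $C_{12,\delta,\gamma,\beta,\alpha,h}(s,\overline{\chi})$. These two Euler products are genuinely different: with $\chi$ trivial, $p=2$, $h_p=1$, and shifts $(\alpha,\beta,\gamma,\delta)=(0.1,0.2,0.3,0.5)$, the $p$-local factor of $C_{12,\delta,\gamma,\beta,\alpha,h}(0)$ is approximately $1.0866$, whereas that of $C_{12,\beta,\alpha,\delta,\gamma,h}(0)$ is approximately $1.1092$; only the latter, multiplied by $p^{\beta h_p}$, recovers the $p$-local factor of $B^{\prime}_{\alpha,-\gamma,-\beta,\delta,h}(0,\overline{\chi})\approx 1.2740$. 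Indeed the $C^{(1)}$-numerators you would be comparing, $(p^{\alpha-\beta}+p^{-\beta-\gamma})$ versus $(p^{-\alpha-\delta}+p^{\gamma-\delta})$ up to a common factor, coincide only on the hyperplane $\alpha+\delta=\beta+\gamma$. Your $i=0$ check happens to pass (it depends only on $\beta+\delta$), which masks the error, but the $i=1,2$ checks fail.

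So the gap is concrete: the $h$-part identity you propose to verify by inspection,
\[
p^{\beta h_{p}}\,\frac{C^{(i)}_{12,\delta,\gamma,\beta,\alpha,h}(0,\overline{\chi})}{1-p^{-\beta-\delta}}
=\frac{B^{\prime(i)}_{\alpha,-\gamma,-\beta,\delta,h}(0,\overline{\chi})}{p^{\beta}-p^{-\delta}},
\]
is not an identity for $i=1,2$. Replacing the subscript $(\delta,\gamma,\beta,\alpha)$ by $(\beta,\alpha,\delta,\gamma)$ throughout (and making the corresponding change in the $k$-part, whose $C_{12}$-factor should carry shifts $(\gamma,\delta,\alpha,\beta)$) repairs the argument and then the rest of your proof, including the treatment of the $k(q)/q$ part and the collapse of the $\overline{\chi}(p)=0$ local factors, goes through. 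The paper's terse statement $C_{21,\boldsymbol{\alpha},h,k}(s,\chi)=C_{12,\boldsymbol{\alpha},k,h}(s,\overline{\chi})$ is admittedly easy to misread, but the factorisation you wrote down is the wrong one and cannot be checked ``by inspection'' because it is false.
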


Combining the Lemmas of  sections \ref{9.1} and \ref{9.2} with Propositions \ref{off diagonal main terms prop} and \ref{off diagonal lower order terms}
respectively we get Proposition \ref{off diagonals prop} and hence Theroem \ref{main thm}.

\end{document}